\def\titlerunning#1{\gdef\titrun{#1}}
\def\author#1{\gdef\autrun{\def\and{\unskip, }#1}\gdef\@author{#1}}
\def\subjclass#1{{\renewcommand{\thefootnote}{}%
\footnote{\emph{Mathematics Subject Classification (2010):} #1}}}
\def\keywords#1{\par\medskip
\noindent\textbf{Keywords.} #1}
\newtheorem{theorem}{Theorem}[section]
\newtheorem{lemma}[theorem]{Lemma}
\newtheorem{proposition}[theorem]{Proposition}
\theoremstyle{definition}
\newtheorem{definition}[theorem]{Definition}
\newtheorem{remark}[theorem]{Remark}
\DeclareMathOperator{\supp}{supp}
\DeclareMathOperator{\law}{Law}
\DeclareMathOperator{\sgn}{sgn}
\DeclareMathOperator{\spann}{span}
\DeclareMathOperator{\pr}{pr}
\numberwithin{equation}{section}
\newcommand{\eps}{\varepsilon}
\newcommand{\R}{\mathbb{R}}
\newcommand{\I}{\mathbb{I}}
\newcommand{\Q}{\mathbb{Q}}
\newcommand{\N}{\mathbb{N}}
\newcommand{\p}{\mathbb{P}}
\newcommand{\Cf}{\mathrm{C}}
\newcommand{\B}{\mathcal{B}}
\newcommand{\leb}{\mathrm{Leb}}
\newcommand{\F}{\mathcal{F}}
\renewcommand{\P}[1]{\ensuremath{\mathbb P \left[ #1 \right]}}
\newcommand{\E}[1]{\ensuremath{\mathbb E \left[ #1 \right]}}
\newcommand{\LL}[1]{\ensuremath{#1^T}}
\newcommand{\M}{\mathcal{M}}
\newcommand{\Ito}{It\^o }
\newcommand{\e}{\ensuremath{\mathbb E}}
\newcommand{\nd}{\alpha_0}
\newcommand{\W}{\mathcal{W}}
\newcommand{\cN}{\mathcal{N}}
\newcommand{\cS}{\mathcal{S}}
\newcommand{\by}{\mathrm{y}}
\newcommand{\bX}{\mathrm{X}}
\renewcommand{\Im}{\mathrm{Im}\,}
\newcommand{\HS}{\mathcal{L}_2}
\renewcommand{\L}{\mathrm{L}}
\newcommand{\tpr}{\widetilde{\pr}}
\newcommand{\mo}[1]{\ensuremath{\left[ #1 \cdot\right] }}
\newcommand{\te}{\tilde{e}}
\newcommand{\rB}{\mathrm{B}}
\title{Sticky-Reflected Stochastic Heat Equation\\ Driven by Colored Noise
%Colored noise driven SPDEs with\\ sticky reflected barrier
}
\titlerunning{Sticky-Reflected Stochastic Heat Equation}
\author{Vitalii Konarovskyi\footnote{Fakult\"{a}t f\"{u}r Mathematik und Informatik, Universit\"{a}t Leipzig, Augustusplatz 10, 04109 Leipzig, Germany, E-mail: \href{mailto:konarovskyi@gmail.com}{konarovskyi@gmail.com}}\, \footnote{Institute of Mathematics of NAS of Ukraine, Tereschenkivska st. 3, 01024 Kiev, Ukraine}} 
\date{\today}
\begin{document}

\maketitle

\abstract{ 
  We prove the existence of a sticky-reflected solution to the heat equation on the spatial interval $[0,1]$ driven by colored noise. The process can be interpreted as an infinite-dimensional analog of the sticky-reflected Brownian motion on the real line, but now the solution obeys the usual stochastic heat equation except points where it reaches zero. At zero the solution has no noise and a drift pushes it to stay positive. The proof is based on a new approach that can also be applied to other types of SPDEs with discontinuous coefficients. 
}

\keywords{Sticky-reflected Brownian motion, stochastic heat equation, colored noise, $Q$-Wiener process, discontinuous coefficients}

\subjclass{60H15, %SPDE
  35R05, %Partial differential equations with discontinuous coefficients or data
  35R60, %Partial differential equations with randomness, stochastic partial differential equations
  60G44 %Martingales with continuous parameter
}

\section{Introduction}

In this paper, we study the existence of a continuous function $X:[0,1] \times [0,\infty)\to[0,\infty)$ that is a weak solution to the following SPDE
\begin{equation} %main equation
  \label{equ_main_equation}
  \frac{\partial X_t}{\partial t}= \frac{1}{ 2 }\frac{\partial^2 X_t}{\partial u^2}+\lambda \I_{\left\{ X_t=0 \right\}}+f(X_t)+\I_{\left\{ X_t>0 \right\}}Q\dot{W}_t
\end{equation}
with Neumann 
\begin{equation} %Neumann boundary conditions
  \label{equ_neumann_boundary_conditions}
  X'_t(0)= X'_t(1)= 0,\quad t\geq 0,
\end{equation}
or Dirichlet
\begin{equation} %Dirichlet boundary conditions
  \label{equ_dirichlet_boundary_conditions}
  X_t(0)= X_t(1)=0, \quad t\geq 0,
\end{equation}
boundary conditions and the initial condition
\begin{equation} %initial condition
  \label{equ_initial_condition}
  X_0(u)= g(u),\quad u \in [0,1],
\end{equation}
where $\dot{W}$ is a space-time white noise, the functions $g\in \Cf[0,1]$ and $\lambda \in \L_2:=\L_2[0,1]$ are non-negative, $f$ is a continuous function from $[0,\infty)$ to $[0,\infty) $ which has a linear growth and $f(0)=0$, and $Q$ is a non-negative definite self-adjoint Hilbert-Schmidt operator on $\L_2$. We will also assume that in the case of the Dirichlet boundary conditions $g(0)=g(1)=0$. 

The equation appears as a sticky-reflected counterpart of the reflected SPDE introduced in~\cite{Haussmann:1989,Nualart:1992}. We assume that a solution obeys the stochastic heat equation being strictly positive, but reaching zero, its diffusion vanishes and an additional drift at zero pushes the process to be positive. The form of equation~\eqref{equ_main_equation} is similar to the form of the SDE for a sticky-reflected Brownian motion on the real line
\begin{equation} %SRBM
  \label{SRBM}
  dx(t)= \lambda \I_{\left\{ x(t)=0 \right\}}dt+\I_{\left\{ x(t)>0 \right\}}\sigma dw(t),
\end{equation}
and we expect that the local behaviour of $X$ at zero is very similar to the behaviour of the sticky-reflected Brownian motion $x$. Remark that SDE~\eqref{SRBM} admits only a weak unique solution because of its discontinuous coefficients, see e.g. \cite{Chitashvili:1997,Karatzas:2011,Engelbert:2014}. The approaches which are applicable to sticky processes in finite-dimensional spaces can not be used for solving of SPDE~\eqref{equ_main_equation}. For instance,  Engelbert and Peskir in~\cite{Engelbert:2014} showed that equation~\eqref{SRBM} admits a weak (unique) solution, where their approach was based on the time change for a reflected Brownian motion. This method is very restrictive and can be applied only for the sticky dynamics in a one-dimensional state space. An equation for sticky-reflected dynamics for higher (finite) dimensions was considered by Grothaus and co-authors in~\cite{Grothaus:2017,Fattler:2016,Grothaus:2018}, where they used the Dirichlet form approach~\cite{Fukushima:2011,Ma:1992}. This approach was based on a priori knowledge of the invariant measure. Since the space is infinite-dimensional in our case, finding of the invariant measure seems a very complicated problem (see e.g.~\cite{Funaki:2001,Zambotti:2001} for the form of invariant measure for the reflected stochastic heat equation driven by the white noise). 

In this paper, we propose a new method for the proof of existence of weak solutions to equations describing sticky-reflected behaviour. This approach is is a modification of the method proposed by the author in~\cite{Konarovskyi:CFWDPA:2017}, and is based on a property of quadratic variation of semimartingales.

The paper leaves a couple of important open problems. The first problem is the uniqueness of a solution to SPDE~\eqref{equ_main_equation}-\eqref{equ_initial_condition}. Similarly to the one-dimensional SDE for sticky-reflected Brownian motion~\eqref{SRBM}, where the strong uniqueness is failed~\cite{Chitashvili:1997,Engelbert:2014}, we do not expect the strong uniqueness for the SPDE considered here. However, we believe that the weak uniqueness holds.

Another interesting question is the existence of solutions to a similar sticky-reflected heat equation driven by the white-noise. It seems that the method proposed here can be adapted to the case of such an SPDE. For this we need a similar statement to Theorem~\ref{the_properties_of_quadratic_variation_of_l2_valued_processes}, that remains an open problem.

\subsection{Definition of solution and main result}%
\label{sub:definition_of_solution}

For convenience of notation we introduce a parameter $\nd$ which equals to 1 in the case of the Neumann boundary conditions~\eqref{equ_neumann_boundary_conditions} and 0 in the case of Dirichlet boundary conditions~\eqref{equ_dirichlet_boundary_conditions}. Let us also introduce for $k\geq 1$ the space $\Cf^k[0,1]$ of $k$-times continuously differentiable functions on $(0,1)$ which together with their derivatives up to the order $k$ can be extended to continuous functions on $[0,1]$. We will write $\varphi \in \Cf^k_{\alpha_0}[0,1]$ if additionally $\varphi^{(\alpha_0)}(0)=\varphi^{(\alpha_0)}(1)=0$, where $\varphi^{(0)}=\varphi$ and $\varphi^{(1)}=\varphi'$.

Denote the inner product in the space $\L_2$ by $\langle \cdot  , \cdot  \rangle$ and the corresponding norm by $\|\cdot\|$. Let us give a definition of a weak solution to SPDE~\eqref{equ_main_equation}. 

\begin{definition} %definition of solutions to SPDE
  \label{def_definition_of_solutions_to_spde}
  We say that a continuous function $X:[0,1] \times [0,\infty) \to [0,\infty)$ is a {\it (weak) solution} to SPDE~\eqref{equ_main_equation}-\eqref{equ_initial_condition}, if for every $\varphi \in \Cf^{2}_{\alpha_0}[0,1]$ the process 
\begin{align*}
  \M^{\varphi}&=  \langle X_t , \varphi \rangle- \langle X_0 , \varphi \rangle - \frac{1}{ 2 }\int_{ 0 }^{ t } \langle X_s , \varphi'' \rangle ds\\
  &- \int_{ 0 }^{ t } \langle \lambda \I_{\left\{ X_s=0 \right\}} , \varphi \rangle ds-\int_{ 0 }^{ t } \langle f(X_s) , \varphi \rangle ds , \quad t \geq 0, 
\end{align*}
is an $(\F^{X}_t)$- martingale with quadratic variation 
\[
  \left[ M^{\varphi} \right]_t=\int_{ 0 }^{ t } \left\|Q (\I_{\left\{ X_s>0 \right\}}\varphi)\right\|^2ds, \quad t \geq 0.
\]
\end{definition}

Hereinafter $\{ e_k,\ k\geq 1 \}$ will denote the basis in $\L_2$ consisting of eigenvectors of the non-negative definite self-adjoint operator $Q$. Let $\{ \mu_k,\ k\geq 1 \}$ be the corresponding family of eigenvalues of $Q$. We note that $\sum_{ k=1 }^{ \infty } \mu_k^2<\infty$, since $Q$ is a Hilbert-Schmidt operator. Introduce the function 
\begin{equation} %function chi
  \label{equ_function_chi}
  \chi^2:=\sum_{ k=1 }^{ \infty }\mu_k^2e_k^2,
\end{equation}
where the series trivially converges in $L^1[0,1]$ and a.e.  The main result of this paper reads as follows.

\begin{theorem}[Existence of solutions] %existence of solution to SPDE
  \label{the_existence_of_solution_to_spde}
  If 
  \begin{equation} %the main condition for existance
  \label{equ_the_main_condition_for_existance}
    \lambda \I_{\left\{ \chi>0 \right\}}=\lambda \quad \mbox{a.e.},
  \end{equation}
  then SPDE~\eqref{equ_main_equation}-\eqref{equ_initial_condition} admits a weak solution.
  %Let $g,\lambda \in \Cf[0,1]$ be continuous non-negative functions and $Q$ be a non-negative definite self-adjoint Hilbert-Schmidt operator on $\L_2$. Let also $\chi^2$ be defined by~\eqref{equ_function_chi}. Equation~\eqref{equ_main_equation} satisfying conditions~\eqref{equ_neumann_boundary_conditions} and~\eqref{equ_initial_condition} has a solution if  a.e.
\end{theorem}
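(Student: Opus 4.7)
The plan is to construct $X$ as a weak limit of solutions to regularized SPDEs with continuous coefficients and then to identify the limit by means of the quadratic variation property of $\L_2$-valued semimartingales (Theorem~\ref{the_properties_of_quadratic_variation_of_l2_valued_processes}), in the spirit of the strategy developed in~\cite{Konarovskyi:CFWDPA:2017}.

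\emph{Step 1 (regularization).} For each $n\geq 1$ I fix a smooth non-decreasing function $\psi_n:[0,\infty)\to[0,1]$ with $\psi_n(0)=0$ and $\psi_n(x)=1$ for $x\geq 1/n$, and consider the SPDE
\[
  \partial_t X^n_t=\tfrac{1}{2}\partial_u^2X^n_t+\lambda\bigl(1-\psi_n(X^n_t)\bigr)+f(X^n_t)+\psi_n(X^n_t)\,Q\dot{W}_t
\]
with the same boundary and initial data (possibly adding a penalization forcing $X^n\geq 0$, to be verified \emph{a~posteriori}). Because all coefficients are continuous, bounded or of linear growth, and $Q$ is Hilbert--Schmidt, standard Da~Prato--Zabczyk theory yields a weak solution $X^n$.

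\emph{Step 2 (tightness and passage to the limit).} It\^o's formula for $\|X^n_t\|^2$ combined with $\lambda\in\L_2$, the linear growth of $f$ and $\sum_k\mu_k^2<\infty$ gives moment bounds in $L^p(\Omega;C([0,T];\L_2))$ uniform in $n$; combined with factorization applied to the mild formulation, these yield spatial--temporal Hölder estimates and hence tightness of $\{\law(X^n)\}$ in $C([0,T]\times[0,1])$. By Skorokhod representation I pass to a subsequence with $X^n\to X$ a.s.\ uniformly and $X\geq 0$. The heat and $f$-terms pass to the limit by continuity/dominated convergence. By weak-$*$ compactness in $L^\infty([0,T]\times[0,1]\times\Omega)$, along a further subsequence $\psi_n(X^n)\to\alpha$ for some $[0,1]$-valued $\alpha$, so that the sticky drift converges to $\int_0^t\langle\lambda(1-\alpha_s),\varphi\rangle ds$ and the martingale part to a continuous $(\F_t^X)$-martingale $M^\varphi$ whose quadratic variation is $\int_0^t\|Q(\alpha_s\varphi)\|^2ds$.

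\emph{Step 3 (identifying $\alpha$ — the main obstacle).} What remains, and what I expect to be the hardest point, is to show that the identities required by Definition~\ref{def_definition_of_solutions_to_spde} hold with $\alpha_s$ in place of $\I_{\{X_s>0\}}$. On $\{X_s(u)>0\}$, a.s.\ convergence of $X^n$ together with the pointwise convergence $\psi_n\to\I_{(0,\infty)}$ gives $\alpha_s(u)=1$. The converse — that $\alpha_s\cdot\chi^2=0$ on $\{X_s=0\}$ — is where Theorem~\ref{the_properties_of_quadratic_variation_of_l2_valued_processes} enters: morally, a nonnegative continuous semimartingale cannot carry positive quadratic variation density at instants when it is pinned at zero, because an infinitesimal martingale excursion would drive it negative. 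The cited theorem is precisely the $\L_2$-valued formulation of this phenomenon, and applied to the limiting semimartingale $X$ together with the Hilbert--Schmidt structure of $Q$ it forces $\alpha_s(u)\chi^2(u)=0$ whenever $X_s(u)=0$. Once this is in hand, the standing hypothesis $\lambda\I_{\{\chi>0\}}=\lambda$ removes the set $\{\chi=0\}$ from both the sticky-drift integral and from $\|Q(\cdot\,\varphi)\|^2$, yielding $\langle\lambda(1-\alpha_s),\varphi\rangle=\langle\lambda\I_{\{X_s=0\}},\varphi\rangle$ and $\|Q(\alpha_s\varphi)\|^2=\|Q(\I_{\{X_s>0\}}\varphi)\|^2$, which is exactly the conclusion of Definition~\ref{def_definition_of_solutions_to_spde}.
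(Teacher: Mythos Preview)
Your overall strategy---regularize, prove tightness, pass to the limit, and identify the discontinuous coefficients via Theorem~\ref{the_properties_of_quadratic_variation_of_l2_valued_processes}---matches the paper's philosophy. The approximation scheme, however, differs: the paper does \emph{not} regularize the SPDE directly but instead discretizes in space (polygonal approximation \`a la Funaki), first proving existence of the finite-dimensional sticky-reflected SDE~\eqref{equ_system_of_sde}--\eqref{equ_boundary_conditions_for_sde} (Theorem~\ref{the_existence_of_solutions_to_system_of_sde}) and then sending the mesh to zero. This two-level scheme makes nonnegativity and tightness classical (Funaki's estimates apply verbatim), whereas in your one-step scheme both points are genuine subproblems that you leave open (``possibly adding a penalization'').

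The real gap is in Step~2/3, where you assert that the limiting martingale $M^\varphi$ has quadratic variation $\int_0^t\|Q(\alpha_s\varphi)\|^2\,ds$. Weak-$*$ convergence $\psi_n(X^n)\to\alpha$ does \emph{not} pass through the quadratic map $h\mapsto\|Q(h\varphi)\|^2$, so this identity is unjustified. The paper avoids this by taking, along a further subsequence, the weak limit of the \emph{operator-valued} density $\Gamma^n_s=\mo{\I_{\{X^n_s>0\}}}Q^2\mo{\I_{\{X^n_s>0\}}}$ in the compact ball $\rB(\HS)$, obtaining an a~priori unknown $\Gamma$ (Proposition~\ref{pro_subsequence_of_xn}). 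Only \emph{after} applying Theorem~\ref{the_properties_of_quadratic_variation_of_l2_valued_processes} to the square root $L$ of $\Gamma$ does the paper identify $\Gamma=\mo{\I_{\{X>0\}}}Q^2\mo{\I_{\{X>0\}}}$ (Section~\ref{sub:proof_of_theorem}, Part~I). Crucially, this still says nothing about the drift weak limit $\sigma$ (your $\alpha$): the link $\I_{\{\chi>0\}}\sigma=\I_{\{\chi>0\}}\I_{\{X>0\}}$ is established by a separate technical argument (Lemma~\ref{lem_convergence_of_sigma}), which exploits the convergence of $\Gamma^n$ to $\I_{\{X>0\}}Q^2\I_{\{X>0\}}$ together with the explicit structure $\chi^2=\sum_k\mu_k^2e_k^2$ to force $\chi\sigma\I_{\{X=0\}}=0$. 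Your sketch collapses these two distinct limits (operator-valued $\Gamma$ and scalar $\alpha$) into one and thereby skips the part of the proof where the hypothesis $\lambda\I_{\{\chi>0\}}=\lambda$ actually does its work.
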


%\begin{remark} %on Dirichlet boundary conditions
%  \label{rem_on_dirichlet_boundary_conditions}
%  Theorem~\ref{the_existence_of_solution_to_spde} remains true if the Neumann boundary condition is replaced by Dirichlet boundary condition~\eqref{equ_dirichlet_boundary_conditions} with the additional assumption $g(0)=g(1)=0$. The proof of this fact is similar to the theorem only with a slight modification. So further we will only consider the case of the Neumann boundary conditions.
%\end{remark}

\begin{remark} %
  Condition~\eqref{equ_the_main_condition_for_existance} means that the drift $\lambda$ has to be equal to zero for those $u$ for which the noise vanishes.
\end{remark}

\begin{remark} %existence when the main condition faild
  \label{rem_existence_when_the_main_condition_faild}
  The equation can admit a solution even if condition~\eqref{equ_the_main_condition_for_existance} does not hold. The reason is that the existence can be failed if $X_t(u)=0$ for $u \in [0,1]$ such that $\lambda(u)>0$ and $\chi(u)=0$ because of the term $\lambda\I_{\{X_t=0\}}$ and the absence of the noise for such $u$. However, if the initial condition is strictly positive for such $u$, then the solution could stay always strictly positive for such $u$, by the comparison principle for the classical heat equation. Therefore, the solution will exist. Take for example $Q=0$ and $f=0$. Then a weak solution to the heat equation
  \[
    \frac{\partial X_t}{\partial t}= \frac{1}{ 2 }\frac{\partial^2 X_t}{\partial u^2}
  \]
  considered with corresponding boundary and initial conditions is a solution to SPDE \eqref{equ_main_equation}-\eqref{equ_initial_condition} if $X_t(u)>0$, $t>0$, $u \in (0,1)$. But the strong positivity of $X$ is valid e.g. under the assumption the strong positivity of the initial condition. Hence, SPDE~\eqref{equ_main_equation}-\eqref{equ_initial_condition} has a weak solution even if $\lambda>0$ for e.g. $Q=0$, $f=0$ and $g>0$.
\end{remark}

We will construct a solution to equation~\eqref{equ_main_equation} as a limit of polygonal approximation similarly to the approach done in~\cite{Funaki:1983}. The main difficulty here is that coefficients are discontinuous. So, we cannot pass to the limit directly. In the next section, we will explain the key idea which allows to overcome this difficulty.

\subsection{Key idea of passing to the limit}%
\label{sub:the_main_idea_of_passing_to_the_limit}

We demonstrate our idea of passing to the limits in the case of discontinuous coefficients using the equation for a sticky-reflected Brownian motion in $\R $
\begin{equation} %sticky reflected brownian motion
  \label{equ_sticky_reflected_brownian_motion}
  \begin{split}
    dx(t)&= \lambda \I_{\left\{ x(t)=0 \right\}}dt+\I_{\left\{ x(t)>0 \right\}}\sigma dw(t), \quad t\geq 0,\\
    x(0)&= x^0,
  \end{split}
\end{equation}
where $w$ is a standard Brownian motion in $\R $ and $\lambda$, $\sigma$, $x^0$ are positive constants. It is well known that this equation has only a unique weak solution (see e.g.~\cite{Engelbert:2014}).

Let us show that a solution to SDE~\eqref{equ_sticky_reflected_brownian_motion} can be constructed as a weak limit of solutions to equations with ``good'' coefficients. The first three steps proposed here are rather standard and the last step shows how one can overcome the problem of the discontinuity of the coefficients.

{\it Step I. Approximating sequence.} Consider a non-decreasing continuously differentiable function $\kappa:\R \to \R$ such that $\kappa(s)=0$, $s\leq 0$, and $\kappa(s)=1$, $s\geq 1$. Denote $\kappa_{\eps}(s):=\kappa\left( \frac{s}{ \eps } \right)$, $s \in \R $, and consider the following SDE 
\begin{equation} %approximating equation for srbm
  \label{equ_approximating_equation_for_srbm}
  \begin{split}
    dx_{\eps}(t)&= \lambda \left(1-\kappa_{\eps}^2(x_{\eps}(t))\right)dt+\kappa_{\eps}(x_{\eps}(t))\sigma dw(t), \quad t\geq 0,\\
    x_{\eps}(0)&= x^0.
  \end{split}
\end{equation}
This SDE has a unique strong solution for every $\eps>0$.

{\it Step II. Tightness in an appropriate space.} 
Consider the following processes
\begin{align*}
  a_{\eps}(t):&=\lambda\int_{ 0 }^{ t }\left( 1-\kappa_{\eps}^2(x_{\eps}(s)) \right)ds, \quad t\geq 0,  \\
  \eta_{\eps}(t):&= \int_{ 0 }^{ t }\sigma \kappa_{\eps}(x_{\eps}(s))dw(s), \quad t\geq 0,
\end{align*}
and 
\[
  [\eta_{\eps}]_t= \int_{ 0 }^{ t }\sigma^2 \kappa^2_{\eps}(x_{\eps}(s))ds , \quad t\geq 0,
\]
where $[\eta_{\eps}]$ is the quadratic variation of the martingale $\eta_{\eps}$.

By the uniform boundedness of the coefficients of SDE~\eqref{equ_approximating_equation_for_srbm}, one can show that the family $\{ (x_{\eps}, a_{\eps}, \eta_{\eps}, [\eta_{\eps}]),\ \eps>0 \}$ is tight in $\left(\Cf[0,\infty)\right)^4$. By Prokhorov's theorem, one can choose a subsequence $(x_m,a_m,\eta_m,[\eta_m]):=(x_{\eps_m},a_{\eps_m},\eta_{\eps_m}, [\eta_{\eps_m}])$, $m\geq 1$, which converges to $(x,a,\eta,\rho)$ in $\left( \Cf[0,\infty) \right)^4$ in distribution as $m\to\infty$. By the Skorokhod representation theorem, we may assume that $(x_m,a_m,\eta_m,[\eta_m]) \to (x,a,\eta,\rho)$ a.s. as $m\to\infty$.
 
 {\it Step III. Properties of the limit process.} One can see that for every $T>0$ there exist a random element $\dot{\rho}$ in $\L_2[0,T]$ and a subsequence $N$ such that
\begin{equation} %convergence of sig for sr bm
  \label{equ_convergence_of_sig_for_sr_bm}
  \sigma^2\kappa_m^2(x_m) \to \dot{\rho} \quad \mbox{in the weak topology of}\ \ \L_2[0,T]
\end{equation}
along $N$, for every $t \in[0,T]$
\begin{equation} %equalities of sr bm
  \label{equ_equalities_of_sr_bm}
  x(t)= x^0+a(t)+\eta(t), \quad \rho(t)= \int_{ 0 }^{ t }\dot{\rho}(s)ds, \quad a(t)=\lambda\left( t- \frac{1}{ \sigma^2 }\rho(t)\right),
\end{equation}
and $\eta$ is a continuous square-integrable martingale with quadratic variation $\rho$. We may assume that $N=\N$.

{\it Step IV. Identification of quadratic variation and drift.} Because of the discontinuity of the coefficients of equation~\eqref{equ_sticky_reflected_brownian_motion}, we cannot conclude directly that $\rho(t)=\int_{ 0 }^{ t }\sigma^2 \I_{\left\{ x(s)>0 \right\}}ds $ and $a(t)=\lambda\int_{ 0 }^{ t } \I_{\left\{ x(s)=0 \right\}}ds $, $t \in[0,T]$, which would imply that $x$ is a weak solution to SDE~\eqref{equ_sticky_reflected_brownian_motion}. We propose to overcome this problem as follows. Let us use the following facts:
\begin{enumerate}
  \item[a)] if $x(t)$, $t\geq 0$, is a continuous non-negative semimartingale with quadratic variation 
    \[
      [x]_t=\int_{ 0 }^{ t } \sigma^2(s)ds, \quad t\geq 0, 
    \]
    then almost surely
    \[
      [x]_t=\int_{ 0 }^{ t } \sigma^2(s)\I_{\left\{ x(s)>0 \right\}}ds, \quad t\geq 0;\footnote{see also Lemma~\ref{lem_quadratic_variation_of_rv_semimartingale}}
    \]

  \item[b)] if $s_m \to s$ in $\R $, then $\kappa_m^2(s_m)\I_{(0,+\infty)}(s) \to \I_{(0,+\infty)}(s)$ in $\R $ as $m\to\infty$.
\end{enumerate}
So, using~\eqref{equ_convergence_of_sig_for_sr_bm}, a), b) and the dominated convergence theorem, we get almost surely 
\begin{align*}
  \rho(t)&= \int_{ 0 }^{ t } \dot{\rho}(s)ds=\int_{ 0 }^{ t } \dot{\rho}(s)\I_{\left\{ x(s)>0 \right\}}ds\\
  &= \lim_{ m\to\infty }\int_{ 0 }^{ t } \sigma^2 \kappa_m^2(x_m(s))\I_{\left\{ x(s)>0 \right\}}ds =\int_{ 0 }^{ t }\sigma^2 \I_{\left\{ x(s)>0 \right\}}ds, \quad t \in[0,T]. 
\end{align*}
Hence,~\eqref{equ_equalities_of_sr_bm} implies 
\[
  a(t)=\lambda\left( t- \frac{1}{ \sigma^2 }\rho(t) \right)=\lambda \int_{ 0 }^{ t } \I_{\left\{ x(s)=0 \right\}}ds, \quad t \in[0,T].  
\]
Consequently, 
\[
  x(t)=x^0+\lambda \int_{ 0 }^{ t } \I_{\left\{ x(s)>0 \right\}}ds+\eta(t), \quad t\geq 0, 
\]
where $\eta$ is a continuous square-integrable martingale with quadratic variation 
\[
  [\eta]_t=\int_{ 0 }^{ t } \sigma^2 \I_{\left\{ x(s)>0 \right\}}ds, \quad t\geq 0,
\]
that means that $x$ is a weak solution to~\eqref{equ_sticky_reflected_brownian_motion}.
\vspace{3mm}

{\it Content of the paper.}
To show the existence of a weak solution to SPDE~\eqref{equ_main_equation}-\eqref{equ_initial_condition}, we will follow the argument above. Step I will be done in Section~\ref{sub:existence}. More precisely, we will construct a family of processes which will approximate a solution to SPDE~\eqref{equ_main_equation}-\eqref{equ_initial_condition}. The approximating sequence is similar to one considered in~\cite{Funaki:1983}. Section~\ref{sub:tightness} is devoted to the tightness that is Step II of our argument. Step III is made in Section~\ref{sub:martingale_problem_for_limit_points_of_the_discrete_approximation}, where we show that the limit process satisfies equalities similar to~\eqref{equ_equalities_of_sr_bm} (see Proposition~\ref{pro_subsequence_of_xn} there). An analog of property a) above is stated for some infinite-dimensional semimartingales in Theorem~\ref{the_properties_of_quadratic_variation_of_l2_valued_processes} in Section~\ref{sub:a_property_of_quadratic_variation_of_some_semi_martingales}. The proof of the existence theorem is given in Section~\ref{sub:proof_of_theorem}, where we use the approach described in Step~IV. Auxiliary statements are proved in the appendix.

\subsection{Preliminaries}%
\label{sub:preliminaries}

We will denote the inner product and the corresponding norm in a Hilbert space $H$ by $\langle \cdot , \cdot \rangle_H$ and $\|\cdot\|_H$, respectively.

%The space of essentially bounded functions on $[0,1]$ will be denoted by $\L_{\infty}$. Denote the essential supremum norm by $\|\cdot \|_{\infty}$.

For an essentially bounded function $\psi \in L_{\infty}$ we define the multiplication operator $\mo{\psi}$ on $\L_2$ as follows
\[
  \left(\mo{\psi}h\right)(u)=\psi(u)h(u),\quad u \in [0,1],\ \ h \in \L_2.
\]

Let $A$ be an operator on $\L_2$ and $\varphi_1,\dots,\varphi_n \in \L_2$ such that the product $\varphi_1 \dots \varphi_n$ belongs to $\L_2$. To ease notation, we will always write $A \varphi_1\dots \varphi_n$ for $A\left( \varphi_1\dots \varphi_n \right)$.

Denote the space of Hilbert-Schmidt operators on $\L_2$ by $\HS$. Remark that $\HS$ furnished with the inner product 
\[
  \left\langle A,B \right\rangle_{\HS}=\sum_{ n=1 }^{ \infty } \langle Ae_k , Be_k \rangle,\quad A,B \in \HS,
\]
is a Hilbert space, where the norm does not depend on the choice of basis in $\L_2$. The family of operators $\left\{e_k \odot e_l,\ k,l\geq 1\right\}$ form a basis in $\HS$. Here, for every $\varphi,\psi \in \L_2$ $\varphi \odot \psi$ denotes the operator on $\L_2$ defined as $(\varphi\odot \psi)g= \langle g , \psi \rangle \varphi$, \ $g \in \L_2$. 

We will consider the set $\R^{n \times n}$ of all $n \times n$-matrices with real enters as a Hilbert space with the Hilbert-Schmidt inner product $\langle A , B \rangle_{\R^{n \times n}}=\sum_{ k,l=1 }^{ n } A_{k,l}B_{k,l}$.

The indicator function will be defined as usually
\[
\I_S(x)=
\begin{cases}
  1, & \mbox{ if } x \in S, \\
  0, & \mbox{ if } x \not\in S.
\end{cases}
\] 
If $\phi:E_1 \to E_2$ is a function and $S$ is a subset of $E_2$, then $\I_{\left\{ \phi \in S \right\}}$ will denote the function $x \mapsto \I_{S}(\phi(x))$ from $E_1$ to $E_2$. 

Given a Hilbert space $H$, we write $H^T:=\L_2\left([0,T], H\right)$ for the class of all Bochner integrable functions $\Phi:[0,T] \to H$ with
\[
  \|\Phi\|_{H,T}=\left(\int_{ 0 }^{ T } \|\Phi_s\|_H^2ds\right)^{ \frac{1}{ 2 }}<\infty.
\]
One can show that the space $H^T$ equipped with the inner product  
\[
  \langle \Phi , \Psi \rangle_{H,T}=\int_{ 0 }^{ T } \langle \Phi_s , \Psi_s \rangle_Hds, \quad \Phi,\Psi \in H^T,
\]
is a Hilbert space.

Considering a sequence $Z^n$ in $\LL{ H}$, we will say that $Z^n \to Z$ a.e. as $n\to\infty$, if $\leb_T\{t \in[0,T]: Z^n_t \not\to Z_t \ \mbox{in}\ H,\ n\to\infty\}=0$, where $\leb_T$ denotes the Lebesgue measure on $[0,T]$.

Let $L \in \LL{ \HS}$, $Z \in \LL{ \L_2}$, and $S$ be a Borel measurable subset of $\R $. It is easily seen that $L_t\mo{\I_{\{Z_t \in S\}}}$, $t \in[0,T]$, where $L_t\mo{\I_{\{Z_t \in S\}}}$ is the composition of two operators, is well-defined and belongs to $L \in \LL{ \HS}$. We will denote such a function shortly by $L_{\cdot }\mo{\I_{\left\{ Z_{\cdot } \in S \right\}}}$.

Let $I$ be equal to $[0,T]$, $[0,\infty)$ or $[0,T] \times [0,1]$. The space of all continuous functions from $I$ to a Polish space $E$ with the topology of uniform convergence on compact sets is denoted by $\Cf\left(I,E\right)$. If $I=[0,T]$ or $[0,\infty)$, and $E=\R$, then we simply write $\Cf[0,T]$ or $\Cf[0,\infty)$ instead of $\Cf\left(I,\R\right)$.

We will denote the right continuous complete filtration generated by continuous processes $\xi_1(t)$, $t \in I$, \dots, $\xi_n(t)$, $t \in I$, by $(\F_t^{\xi_1,\dots,\xi_n})_{t \in I}$. Remark that such a filtration exists by Lemma~7.8~\cite{Kallenberg:2002}.

\section{Finite sticky reflected particle system}%
\label{sec:finite_sticky_reflected_particle_system}

In this section, we construct a sequence of random processes which will be used for the approximation of a solution to SPDE~\eqref{equ_main_equation}-\eqref{equ_initial_condition}. 

Let $n \geq 1$ be fixed. We set $\pi_k^n=\I_{\left[ \frac{ k-1 }{ n }, \frac{k}{ n } \right)}$, $k \in [n]:=\{ 1,\dots,n \}$. Let $W_t$, $t \geq 0$, be a cylindrical Wiener process in $\L_2$. Define the following Wiener processes on $\R $ as follows
\[
  w_k^n(t):=\sqrt{ n }\int_{ 0 }^{ t } \langle \pi_k^n , QdW_s \rangle, \quad t \geq 0,\ \ k \in [n], 
\]
and note that their joint quadratic variation 
\[
  \left[ w_k^n,w_l^n \right]_t=n\langle Q \pi_k^n , Q \pi_l^n \rangle t=:q^n_{k,l}t, \quad t \geq 0.
\]
Let also $\lambda_k^n:=n\langle \lambda , \pi_k^n \rangle \I_{\left\{ q_{k,k}^n>0 \right\}}$\footnote{We add the indicator $\I_{\left\{ q_{k,k}^n>0 \right\}}$ into the definition of $\lambda_k^n$ because we need the additional condition that $\lambda_k^n=0$ if $q_{k,k}^n=0$ for the existence of solution to SDE~\eqref{equ_system_of_sde}} and $g_k^n:=n \langle g , \pi_k^n \rangle$, \ $k \in [n]$.

Consider the following SDE 
\begin{equation} %system of SDE
  \label{equ_system_of_sde}
  \begin{split}
  dx_k^n(t)&=  \frac{1}{ 2 }\Delta^n x^n_k(t)dt+\lambda_k^n \I_{\left\{ x_k^n(t)=0 \right\}}dt\\
  &+f(x_k^n(t))dt+\sqrt{ n }\I_{\left\{ x_k^n(t)>0 \right\}}dw_k^n(t),\quad k \in [n],
\end{split}
\end{equation}
satisfying the initial condition 
\begin{equation} %initial condition for SDE
  \label{equ_initial_condition_for_sde}
  x_k^n(0)=g_k^n, \quad k \in [n], 
\end{equation}
where $\Delta^nx^n_k=\left( \Delta^nx^n \right)_k=n^2\left(x_{k+1}^n+x_{k-1}^n-2x_k^n\right) $ and 
\begin{equation} %boundary conditions for SDE
  \label{equ_boundary_conditions_for_sde}
  x_0^n(t)=\nd x_1^n(t),\ \ x_{n+1}^n(t)=\nd x_n^n(t), \quad t\geq 0.
\end{equation}

We will construct a solution to SPDE~\eqref{equ_main_equation}-\eqref{equ_initial_condition} as a weak limit in $\Cf\left([0,\infty),\Cf[0,1]\right)$ of processes 
\begin{equation} %process tilda X
  \label{equ_process_tilda_x}
  \tilde{X}^n_t(u)=(un-k+1)x_k^n(t)+(k-nu)x_{k-1}^n(t),  
\end{equation}
$t \in [0,T]$, $u \in \pi^n_k$, $k \in [n]$. Remark that equation~\eqref{equ_system_of_sde} has discontinuous coefficients. So the classical theory of SDE cannot be applied in our case. The existence of the solution will follow from Theorem~\ref{the_existence_of_solutions_to_system_of_sde} which we state below.

\subsection{SDE for sticky-reflected particle system}%
\label{sub:existence}

The aim of this section is to prove the existence of solutions to~\eqref{equ_system_of_sde},~\eqref{equ_initial_condition_for_sde}. We will formulate the problem in slightly general form. So, let $n \in \N$ and $g_k$, $\lambda_k$, $k \in [n]$, be non-negative numbers. We also consider a family of Brownian motions $w_k(t)$, $t\geq 0$, $k \in [n]$, (with respect to the same filtration) with joint quadratic variation
\[
  [w_k,w_l]_t=q_{k,l}t,\quad t\geq 0.
\]
Let as before $f:[0,\infty) \to [0,\infty)$ be a continuous function with with linear growth and $f(0)=0$. 
Consider the following SDE.
\begin{equation} %system of SDE general case
  \label{equ_system_of_sde_general_case}
  \begin{split} %
    dy_k(t)&=  \frac{1}{ 2 }\Delta^n y_k(t)dt+\lambda_k \I_{\left\{ y_k(t)=0 \right\}}dt\\
    &+f(y_k(t))dt+\I_{\left\{ y_k(t)>0 \right\}}dw_k(t),\quad k \in [n],
  \end{split}
\end{equation}
with initial condition
\begin{equation} %initial boundary conditions general case
  \label{equ_initial_boundary_conditions_general_case}
  y_k(0)=g_k,\quad k \in [n],
\end{equation}
and the boundary conditions 
\begin{equation} %boundaty condition general case
  \label{equ_boundaty_condition_general_case}
  y_0(t)=\alpha_0y_1(t),\quad y_{n+1}(t)=\alpha_0y_n(t),\quad t\geq 0.
\end{equation}

\begin{theorem} %existence of solutions to system of SDE
  \label{the_existence_of_solutions_to_system_of_sde}
  Let $q_{k,k}=0$ imply $\lambda_k=0$ for every $k \in [n]$. Then there exists a family of non-negative (real-valued) continuous processes $y_k(t)$, $t \geq 0$, $k \in [n]$, in $\R $ which is a weak (martingale) solution to~\eqref{equ_system_of_sde},~\eqref{equ_initial_condition_for_sde}, that is, $y_k(0)=g_k$,  for each $k \in [n]$
  \begin{align*}
    \cN_k(t)&:=y_k(t)-g_k- \frac{1}{ 2 }\int_{ 0 }^{ t }\Delta^ny_k(s)ds\\
    &-\lambda_k \int_{ 0 }^{ t } \I_{\left\{ y_k(s)=0 \right\}}ds -\int_{ 0 }^{ t } f(y_k(s))ds, \quad t\geq 0, 
  \end{align*}
  is an $(\F^y_t)$-martingale, and the joint quadratic variation of $\cN_k$ and $\cN_l$, $k,l \in [n]$, equals 
  \[
    \left[ \cN_k,\cN_l \right]_t=q_{k,l}\int_{ 0 }^{ t } \I_{\left\{ y_k(s)>0 \right\}}\I_{\left\{ y_l(s)>0 \right\}} ds, \quad t \geq 0.
  \]
\end{theorem}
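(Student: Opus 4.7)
The plan is to follow the four-step program described in Section~1.2 for sticky-reflected Brownian motion, now applied componentwise to the finite coupled system~\eqref{equ_system_of_sde_general_case}.

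\textbf{Step I: smoothed SDE.} Fix a smooth non-decreasing $\kappa\colon\R\to[0,1]$ with $\kappa(s)=0$ for $s\le 0$ and $\kappa(s)=1$ for $s\ge 1$, set $\kappa_\eps(s)=\kappa(s/\eps)$, and consider the approximating system
\begin{align*}
  dy_k^\eps(t)&= \tfrac12\Delta^n y_k^\eps(t)\,dt+\lambda_k\bigl(1-\kappa_\eps^2(y_k^\eps(t))\bigr)dt\\
  &\quad +f(y_k^\eps(t))\,dt+\kappa_\eps(y_k^\eps(t))\,dw_k(t),\qquad k\in[n],
\end{align*}
with the conditions \eqref{equ_initial_boundary_conditions_general_case}--\eqref{equ_boundaty_condition_general_case}. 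The coefficients are locally Lipschitz with linear growth, so a unique strong solution exists on $[0,\infty)$. Each $y_k^\eps$ is non-negative: at $y_k^\eps=0$ the diffusion vanishes, $\lambda_k\ge 0$, $f(0)=0$, and $\Delta^n y_k^\eps \ge 0$ whenever the neighbouring coordinates are non-negative, so a componentwise comparison starting from $g_k\ge 0$ (first-exit argument from the positive orthant) yields $y_k^\eps\ge 0$.

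\textbf{Steps II--III: tightness and weak limit.} Introduce
\begin{align*}
  a_k^\eps(t)&= \lambda_k\int_0^t\bigl(1-\kappa_\eps^2(y_k^\eps(s))\bigr)ds,\\
  \eta_k^\eps(t)&= \int_0^t\kappa_\eps(y_k^\eps(s))\,dw_k(s),\\
  \rho_{k,l}^\eps(t)&= q_{k,l}\int_0^t\kappa_\eps(y_k^\eps(s))\kappa_\eps(y_l^\eps(s))\,ds.
\end{align*}
Boundedness of $\kappa_\eps$ and linear growth of $f$ and $\Delta^n$ yield uniform second-moment bounds on $y^\eps$ by Gronwall's lemma, and Kolmogorov's criterion gives tightness of $\{(y^\eps,a^\eps,\eta^\eps,\rho^\eps)\}_{\eps>0}$ in the corresponding product of $\mathrm{C}[0,\infty)$ spaces. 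By Prokhorov's and Skorokhod's theorems, one may pass along some $\eps_m\to 0$ to a.s.\ convergence to a limit $(y,a,\eta,\rho)$ on a new probability space; a standard martingale convergence argument shows that $\eta_k$ is a continuous square-integrable martingale with $[\eta_k,\eta_l]=\rho_{k,l}$, and
\[
  y_k(t)=g_k+\tfrac12\int_0^t\Delta^ny_k(s)\,ds+a_k(t)+\int_0^t f(y_k(s))\,ds+\eta_k(t).
\]
Weak compactness in $\L_2([0,T])$ allows a further extraction so that $\kappa_{\eps_m}^2(y_k^{\eps_m})\rightharpoonup\dot\beta_k$ and $\kappa_{\eps_m}(y_k^{\eps_m})\kappa_{\eps_m}(y_l^{\eps_m})\rightharpoonup\dot r_{k,l}$ in $\L_2([0,T])$, yielding $a_k(t)=\lambda_k\int_0^t(1-\dot\beta_k(s))\,ds$ and $\rho_{k,l}(t)=q_{k,l}\int_0^t\dot r_{k,l}(s)\,ds$.

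\textbf{Step IV: identification (the main obstacle).} Each $y_k$ is a continuous non-negative semimartingale with $[y_k]_t=q_{k,k}\int_0^t\dot r_{k,k}(s)\,ds$, so Lemma~\ref{lem_quadratic_variation_of_rv_semimartingale} gives $\dot r_{k,k}=\dot r_{k,k}\I_{\{y_k>0\}}$ a.e.\ whenever $q_{k,k}>0$. Combined with the elementary fact that $\kappa_{\eps_m}^2(s_m)\I_{\{s>0\}}\to\I_{\{s>0\}}$ whenever $s_m\to s$, dominated convergence against arbitrary test functions $h\in\L_2([0,T])$ identifies $\dot r_{k,k}=\I_{\{y_k>0\}}$ a.e.; the same reasoning applied to the product $\kappa_{\eps_m}(y_k^{\eps_m})\kappa_{\eps_m}(y_l^{\eps_m})$ yields $\dot r_{k,l}=\I_{\{y_k>0\}}\I_{\{y_l>0\}}$ a.e. For the drift, the identity $\kappa_{\eps_m}^2+(1-\kappa_{\eps_m}^2)=1$ implies $\dot\beta_k=\I_{\{y_k=0\}}$ on $\{q_{k,k}>0\}$, while on $\{q_{k,k}=0\}$ the hypothesis $\lambda_k=0$ makes $a_k\equiv 0$ consistent with the desired formula. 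Hence $a_k(t)=\lambda_k\int_0^t\I_{\{y_k(s)=0\}}\,ds$, completing the martingale characterisation. The delicate step is precisely the invocation of Lemma~\ref{lem_quadratic_variation_of_rv_semimartingale}, whose applicability rests on the non-negativity of the limit $y_k$ secured in Step~I; the hypothesis $q_{k,k}=0\Rightarrow\lambda_k=0$ is also indispensable, since otherwise the ratio $\dot r_{k,k}/q_{k,k}$ used to identify $\dot\beta_k$ is meaningless.
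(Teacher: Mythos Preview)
Your proof follows essentially the same four-step scheme as the paper (Lemma~\ref{lem_tightness_of_y_eps}, Lemma~\ref{lem_basic_properties_of_y}, Proposition~\ref{pro_identification_of_y}), including the key use of Lemma~\ref{lem_quadratic_variation_of_rv_semimartingale} to insert the indicators before passing to the limit. Two minor corrections: in Step~I you use $f$ itself in the approximating SDE, but $f$ is only assumed continuous, not locally Lipschitz, so existence of a strong solution is not automatic---the paper mollifies to $f_\eps$ for this reason; and in Step~IV your sentence ``$\dot\beta_k=\I_{\{y_k=0\}}$'' should read $\dot\beta_k=\I_{\{y_k>0\}}$ (equivalently $1-\dot\beta_k=\I_{\{y_k=0\}}$), though your final formula for $a_k$ is correct. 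Also, Lemma~\ref{lem_quadratic_variation_of_rv_semimartingale} does not actually require non-negativity of the semimartingale, so your closing remark overstates its role.
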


We are going to construct a solution to the SDE approximating the coefficients by Lipschitz continuous functions and using the method described in Section~\ref{sub:the_main_idea_of_passing_to_the_limit}.

Let us take a non-decreasing function $\kappa \in \Cf^1(\R )$ such that $\kappa(x)=0$ for $x\leq 0$, and $\kappa(x)=1$ for $x\geq 1$. Let also $\theta \in \Cf^1(\R )$ be a non-negative function with $\supp \theta \in [-1,1]$ and $\int_{ -\infty }^{ +\infty } \theta(x)dx=1 $.  For every $\eps>0$ we introduce the functions $\kappa_{\eps}(x)=\kappa\left( \frac{x}{ \eps } \right)$, $x \in \R $, and $\theta_{\eps}(x)= \frac{1}{ \eps }\theta\left( \frac{x}{ \eps } \right)$, $x \in \R $. Setting $f_{\eps}(x)=\int_{ 0 }^{ +\infty } \theta_{\eps}(x-y)f(y)dy $, $x \in \R $, we consider the following SDE
\begin{equation} %approximation equation for system of SDE
  \label{equ_approximation_equation_for_system_of_sde}
    \begin{split}
    dy_k^{\eps}(t)&=  \frac{1}{ 2 }\Delta^ny_k^{\eps}(t)dt+\lambda_k \left(1-\kappa_{\eps}^2(y_k^{\eps}(t))\right)dt\\
    &+f_{\eps}(y_k^{\eps}(t))dt+\kappa_{\eps}(y_k^{\eps}(t))dw_k(t),\\
    y_k^{\eps}(0)&= g_k, \quad k \in [n].
  \end{split}
\end{equation}
Since equation~\eqref{equ_approximation_equation_for_system_of_sde} has locally Lipschitz continuous coefficients with linear growth, it has a unique strong solution. 

Our goal is to to show that the sequence $\left\{y^{\eps}=( y^{\eps}_k )_{k=1}^n\right\}_{\eps>0}$ has a subsequence which converges in distribution to a week solution to~\eqref{equ_system_of_sde}.  We denote for every $k \in [n]$
\[
  a^{\eps}_k(t)= \lambda_k\int_{ 0 }^{ t } \left( 1-\kappa_{\eps}^2(y_k^{\eps}(s)) \right)ds,\quad t\geq 0,
\]
and 
\[
  \eta^{\eps}_k(t)= \int_{ 0 }^{ t } \kappa_{\eps}(y_k^{\eps}(s))dw_k(s), \quad t \geq 0.
\]
Set $a^{\eps}=(a^{\eps}_k)_{k=1}^n$ and $\eta^{\eps}=( \eta^{\eps}_k )_{k=1}^n$.

The quadratic variation $\left[ \eta^{\eps} \right]_t$, $t\geq 0$, of the $\R^n $-valued martingale $\eta^{\eps}$ takes values in the space of non-negative definite $n \times n$-matrices with entries   
\[
  \left[ \eta^{\eps}_k,\eta^{\eps}_l \right]_t=\int_{ 0 }^{ t } \sigma^{\eps}_{k,l}(s)ds, 
\]
where $\sigma^{\eps}_{k,l}(s)=\kappa_{\eps}(y^{\eps}_k(s))\kappa_{\eps}(y^{\eps}_l(s))q_{k,l}$.

\begin{remark} %connection between a and sigma
  \label{rem_connection_between_a_and_sigma}
  According to the choice of the approximating sequence for $a$, the equality
  \[
    a_k^{\eps}(t)=\lambda_k\left( t- \frac{1}{ q_{k,k} } \left[\eta_k^{\eps}\right]_t \right), \quad t\geq 0,
  \]
 holds for every $k \in [n]$ satisfying $q_{k,k}>0$.
\end{remark}

Consider the following metric space $\W_{\R^n}:=\left(\Cf\left( [0,\infty),\R^n  \right)\right)^3 \times \Cf\left( [0,\infty),\R^{n \times n} \right)$.

\begin{lemma} %tightness of y eps
  \label{lem_tightness_of_y_eps}
  The family $\left\{\left(y^{\eps_m},a^{\eps_m}, \eta^{\eps_m},\left[ \eta^{\eps_m} \right]\right),\ m\geq 1\right\}$ is tight in $\W_{\R^n }$, where $\eps_m$, $m\geq 1$, is any sequence convergent to zero. 
\end{lemma}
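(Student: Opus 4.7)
The plan is to establish tightness of each of the four marginals separately on $\Cf([0,T], E)$ for every fixed $T>0$, since tightness of the joint law in the uniform-on-compacts topology on $[0,\infty)$ then follows by a standard diagonal procedure together with the equivalence between tightness in a product Polish space and tightness of its marginals.

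The bounded-variation components $a^{\eps_m}$ and $[\eta^{\eps_m}]$ are immediate: since $|\lambda_k(1-\kappa_{\eps}^2(\cdot))|\leq\lambda_k$ and $|\sigma^{\eps}_{k,l}|\leq|q_{k,l}|$, their paths are uniformly Lipschitz in $t$ with deterministic constants and start at zero. Hence both families lie in a single deterministic compact subset of the corresponding path space by Arzel\`a--Ascoli, which in particular gives tightness. For the martingale $\eta^{\eps}$, the Burkholder--Davis--Gundy inequality combined with $\kappa_{\eps}^2\leq 1$ yields
\[
  \E{|\eta^{\eps}_k(t)-\eta^{\eps}_k(s)|^{2p}}\leq C_p\,q_{k,k}^p\,|t-s|^p
\]
uniformly in $\eps$; choosing any $p>1$, the Kolmogorov--Chentsov tightness criterion delivers tightness.

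The substantive step is $y^{\eps}$, which reduces to the uniform moment bound $\sup_{\eps>0}\E{\sup_{t\leq T}\|y^{\eps}(t)\|_{\R^n}^{2p}}<\infty$ for some $p>1$. I would apply It\^o's formula to $\|y^{\eps}\|_{\R^n}^2=\sum_{k=1}^n|y^{\eps}_k|^2$. The discrete Laplacian contribution $\sum_k y^{\eps}_k\Delta^n y^{\eps}_k$ equals, after summation by parts using~\eqref{equ_boundaty_condition_general_case} with $\alpha_0\in\{0,1\}$, the non-positive expression $-n^2\sum_{k=1}^{n-1}(y^{\eps}_{k+1}-y^{\eps}_k)^2$ plus a boundary contribution that vanishes in the Neumann case ($\alpha_0=1$) and equals $-n^2((y^{\eps}_1)^2+(y^{\eps}_n)^2)$ in the Dirichlet case ($\alpha_0=0$); either way it is non-positive. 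The remaining drift terms are dominated by $C(1+\|y^{\eps}\|^2)$ using the uniform linear growth of $f_{\eps}$ (inherited from $f$) and the boundedness of $\kappa_{\eps}$ and $1-\kappa_{\eps}^2$. Gronwall's inequality then yields $\E{\|y^{\eps}(t)\|^2}\leq C_T$ uniformly, and a further BDG step applied to the martingale part of $\|y^{\eps}\|^2$ upgrades this to the $L^{2p}$ supremum estimate.

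With this in hand, returning to~\eqref{equ_approximation_equation_for_system_of_sde} and estimating the drift increment by H\"older together with the moment bound, and the martingale increment by BDG, one obtains
\[
  \E{|y^{\eps}_k(t)-y^{\eps}_k(s)|^{2p}}\leq C_p|t-s|^p \qquad \text{for } |t-s|\leq T,
\]
whence Kolmogorov--Chentsov completes the tightness of $y^{\eps}$. The main obstacle is the uniform moment estimate on $y^{\eps}$ itself, whose proof hinges on the non-positivity of $\langle y^{\eps},\Delta^n y^{\eps}\rangle_{\R^n}$ under both boundary specifications; once that is secured, the four marginals are handled by standard criteria.
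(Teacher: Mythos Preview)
Your argument is correct and reaches the same conclusion, but the route differs from the paper's. The paper invokes the Aldous tightness criterion: it establishes the pointwise moment bound $\E{\|y^{\eps_m}(t)\|_{\R^n}^{2p}}\leq C_{p,T,n}$ via the same It\^o--Gronwall computation you outline (using dissipativity of $\langle y,\Delta^n y\rangle_{\R^n}$), and then controls $\E{\|y^{\eps_m}(\tau_m+\delta_m)-y^{\eps_m}(\tau_m)\|^2}$ at bounded stopping times by a direct It\^o-formula estimate on the squared increment. You instead use Kolmogorov--Chentsov for $y^{\eps}$ and $\eta^{\eps}$, and Arzel\`a--Ascoli for the bounded-variation components $a^{\eps}$ and $[\eta^{\eps}]$. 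Your treatment of the latter two is actually cleaner than the paper's ``similarly'' --- the deterministic Lipschitz bound is the sharpest statement available.

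Two minor remarks on your write-up. First, the supremum bound $\sup_{\eps}\E{\sup_{t\leq T}\|y^{\eps}(t)\|^{2p}}<\infty$ is more than you need: for the Kolmogorov estimate on the drift increment, Jensen's inequality gives $\E{\big|\int_s^t \Delta^n y^{\eps}_k(r)\,dr\big|^{2p}}\leq |t-s|^{2p-1}\int_s^t \E{|\Delta^n y^{\eps}_k(r)|^{2p}}\,dr$, so the \emph{pointwise} moment bound already suffices. Second, your phrase ``a further BDG step applied to the martingale part of $\|y^{\eps}\|^2$ upgrades this to the $L^{2p}$ supremum estimate'' is imprecise as written; the standard way is to apply It\^o directly to $(1+\|y^{\eps}\|^2)^p$ and repeat Gronwall, or to combine BDG with Young's inequality to absorb the supremum. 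Either way the conclusion holds, so this is a matter of presentation rather than a gap.
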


\begin{proof} %
  In order to prove the statement, it is sufficient to show that each family of coordinate processes of $\left( y^{\eps_m},a^{\eps_m},\eta^{\eps_m},\left[ \eta^{\eps_m} \right] \right)$, $m\geq 1$, is tight in the corresponding space. We will only show the tightness for $\left\{y^{\eps_m},\ m\geq 1\right\}$. The tightness for other families can be obtained similarly.

  According to the Aldous tightness criterion~\cite[Theorem 1]{Aldous:1978}, it is enough to show that for every $T>0$, any family of stopping times $\tau_m$, $m\geq 1$, bounded by $T$ and any sequence $\delta_m$ decreasing to zero
  \[
    y^{\eps_m}(\tau_m+\delta_m)-y^{\eps_m}(\tau_m) \to 0\quad \mbox{in probability as }m \to \infty,
  \]
  and $\{y^{\eps_m}(t),\ m\geq 1\}$ is tight in $\R^n $ for each $t \in[0,T]$. 
  
  The conditions above trivially follow from the convergence
  \[
    \E{\left\|y^{\eps_m}(\tau_m+\delta_m)-y^{\eps_m}(\tau_m) \right\|_{\R^n }^2 }\to 0\quad \mbox{as } m \to \infty,
  \]
  and the uniform boundedness of $\E{ \left\| y^{\eps_m}(t) \right\|_{\R^n }^2 }$ in $m\geq 1$ for every $t \in[0,T]$.

  Using the fact that there exists a constant $C>0$ such that $|f_{\eps_m}(x)|\leq C(1+|x|)$, $x \in \R $, $m\geq 1$, the inequality 
  \begin{align*}
    \langle y^{\eps_m}(t) , \Delta^ny^{\eps_m}(t) \rangle&=  -\sum_{ k=1 }^{ n-1 } (y^{\eps_m}_{k+1}(t)-y^{\eps_m}_k(t))^2\\
    &-\nd (y^{\eps_m}_1(t)+y^{\eps_m}_n(t))\leq 2 \eps_m\nd
  \end{align*}
  for all $t \in[0,T]$, the \Ito formula and Gronwall's lemma, one can check that for every $p\geq 1$ there exists a constant $C_{p,T,n}$, depending on $p$, $T$ and $n$, such that 
\begin{equation} %estimate of expectation of y eps
  \label{equ_estimate_of_expectation_of_y_eps}
  \E {\|y^{\eps_m}(t)\|^{2p}_{\R^n }}\leq C_{p,T,n}, \quad t \in[0,T].
\end{equation}

Next, by the \Ito formula and the optional sampling Theorem~7.12~\cite{Kallenberg:2002}, we have
  \begin{equation} %estimate of increment of y eps
  \label{equ_estimate_of_increment_of_y_eps}
    \begin{split}
    \e\big[\|y^{\eps_m}(\tau_m+\delta_m) &-y^{\eps_m}(\tau_m)\|^2_{\R^n }\big]\leq \E{\int_{ \tau_m }^{ \tau_m+\delta_m }\langle y^{\eps_m}(r) , \Delta^ny^{\eps_m}(r) \rangle_{\R^n }dr}\\
  &+2 \E{\int_{ \tau_m }^{ \tau_m+\delta_m } \langle y(r) , \lambda\left( 1-\kappa^2_{\eps_m}(y_{\cdot }(r)) \right) \rangle_{\R^n } dr}\\
  &+ 2\E{\int_{ \tau_m }^{ \tau_m+\delta_m } \langle y^{\eps_m}(r) , f_{\eps_m}(y^{\eps_m}_{\cdot }(r)) \rangle_{\R^n } dr}\\
  &+\E{\int_{ \tau_m }^{ \tau_m+\delta_m } \sum_{ k=1 }^{ n } \kappa_{\eps_m}^2(y^{\eps_m}_{k }(r))q_{k,k}dr}
    \end{split}
  \end{equation}
  for all $m\geq 1$. Using H\"older's inequality, and estimate~\eqref{equ_estimate_of_expectation_of_y_eps} one can conclude that 
\[
  \E{ \left\|y^{\eps_m}(\tau_m+\delta_m)-y^{\eps_m}(\tau_m)\right\|^2_{\R^n } }\to 0 \quad \mbox{as }m \to \infty.
\]
This completes the proof of the lemma.
\end{proof}

By Lemma~\ref{lem_tightness_of_y_eps} and Prokhorov's theorem, there exists a sequence $\{\eps_m\}_{m\geq 1}$ converging to zero such that the sequence $\by^{\eps_m}:=\left( y^{\eps_m},a^{\eps_m},\eta^{\eps_m},[\eta^{\eps_m}] \right)$ converges to a random element $\by:=\left( y,a,\eta,\rho\right)$ in $\W_{\R^n }$ in distriburion. By the Skorokhod representation Theorem~3.1.8~\cite{Ethier:1986}, one can choose a probability space $(\tilde{\Omega},\tilde{\F},\tilde{\p})$ and determine there a family of random elements $\tilde{\by}$, $\tilde{\by}^{\eps_m}$, $m\geq 1$ taking values in $\W_{\R^n }$ such that $\law \tilde{\by}=\law \by$, $\law \tilde{\by}^{\eps_m}=\law \by^{\eps_m}$, $m\geq 1$, and $\tilde{\by}^{\eps_m} \to \by$ in $\W_{\R^n }$ a.s. So, without loss of generality, we will assume that $\by^{\eps_m} \to \by$ in $\W_{\R^n }$ a.s. as $m \to \infty$. Since the sequence $\{\eps_m\}_{m\geq 1}$ will be fixed to the end of this section, we will write $m$ instead of $\eps_m$ in order to simplify the notation.

Let $y=(y_k)_{k=1}^n,a=(a_k)_{k=1}^n,\eta=( \eta_k )_{k=1}^n,\rho=(\rho_{k,l})_{k,l=1}^n$.

\begin{lemma} %basic properties of y
  \label{lem_basic_properties_of_y}
  \begin{enumerate}
    \item[(i)] The coordinate processes $y_k(t)$, $t \geq 0$, $k \in [n]$, of $y$ are non-negative and 
      \[
	y_k(t)=g_k+ \frac{1}{ 2 }\int_{ 0 }^{ t } \Delta^ny_k(s)ds+a_k(t)+\int_{ 0 }^{ t } f(y_k(s))ds+\eta_k(t), \quad t\geq 0,\quad k \in [n].
      \]

    \item [(ii)] For every $k \in [n]$ such that $q_{k,k}>0$ one has 
      \[
	a_k=\lambda_k\left( t- \frac{1}{ q_{k,k}} \rho_{k,k}  \right).
      \]

    \item [(iii)] For every $k \in [n]$ and $T>0$ there exists a random element $\dot{a}_k$ in $\L_2([0,T],\R )$ such that almost surely
      \[
	a_k(t)=\int_{ 0 }^{ t } \dot{a}_k(s)ds, \quad t \in [0,T]. 
      \]

    \item [(iv)] For every $k,l \in [n]$ and $T>0$ there exists a random element $\dot{\rho}_{k,l}$ in $\L_2\left([0,T], \R \right)$ such that almost surely 
      \[
	\rho_{k,l}(t)=\int_{ 0 }^{ t } \dot{\rho}_{k,l}(s)ds,\quad t \in [0,T].
      \]

    \item [(v)] For every $k \in [n]$ the process $\eta_k(t)$, $t \geq 0$, is a continuous square-integrable $(\F^{\eta})$-martingale, and the joint quadratic variation of $\eta_k$ and $\eta_l$, $k,l \in [n]$, equals 
      \[
	\left[ \eta_k,\eta_l \right]_t=\rho_{k,l}(t),\quad t \geq 0. 
      \]
  \end{enumerate}
\end{lemma}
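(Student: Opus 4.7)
The plan is to derive each of (i)--(v) by passing to the limit along the subsequence $m\to\infty$ starting from the corresponding pathwise identity or property already satisfied by the smooth approximation $y^{\eps_m}$. Since $\by^{\eps_m}\to \by$ almost surely in $\W_{\R^n}$, i.e.\ uniformly on every compact time interval, most terms pass to the limit by straightforward continuity, and the real work lies in identifying the drift $a$ and the quadratic variation $\rho$ of the limit.

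For (i), I would first establish non-negativity of the smooth approximants $y_k^{\eps}$ by applying the \Ito formula to $\sum_{k=1}^n ((y_k^{\eps})_-)^2$. On $\{y_k^{\eps}\le 0\}$ one has $\kappa_{\eps}(y_k^{\eps})=0$, so the martingale part and the \Ito correction drop out; the drift contributions $\lambda_k(1-\kappa_{\eps}^2(y_k^{\eps}))$ and $f_{\eps}(y_k^{\eps})$ are non-negative there; and summation by parts for the discrete Laplacian, together with $(y_k^{\eps})_+(y_k^{\eps})_-=0$, gives $\langle y_-^{\eps},\Delta^n y^{\eps}\rangle_{\R^n}\ge 0$. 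All three contributions therefore have the right sign to force $\sum_k((y_k^{\eps})_-)^2\equiv 0$, so $y_k^{\eps}\ge 0$, and the a.s.\ uniform limit inherits this. The integral identity in (i) is then obtained by writing the approximating SDE in integral form and letting $m\to\infty$ termwise: $\int_0^t\Delta^n y_k^{\eps_m}(s)ds\to\int_0^t\Delta^n y_k(s)ds$ by uniform convergence of the coordinates, $\int_0^t f_{\eps_m}(y_k^{\eps_m}(s))ds\to\int_0^t f(y_k(s))ds$ because $f_{\eps_m}\to f$ locally uniformly, while $a_k^{\eps_m}\to a_k$ and $\eta_k^{\eps_m}\to \eta_k$ uniformly by hypothesis. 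Assertion (ii) follows immediately from Remark~\ref{rem_connection_between_a_and_sigma} by passing to the limit in $a_k^{\eps_m}(t)=\lambda_k(t-q_{k,k}^{-1}[\eta_k^{\eps_m}]_t)$.

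For (iii) and (iv), observe that $a_k^{\eps_m}$ is Lipschitz continuous with constant $\lambda_k$ and $\rho_{k,l}^{\eps_m}$ with constant $|q_{k,l}|$, uniformly in $m$, since their integrands are essentially bounded by these constants. Uniform limits of equi-Lipschitz functions are Lipschitz with the same constant, so $a_k$ and $\rho_{k,l}$ are absolutely continuous with essentially bounded densities $\dot a_k,\dot\rho_{k,l}\in L_\infty([0,T])\subset L_2([0,T],\R)$. Alternatively, the bounded sequences $\{\dot a_k^{\eps_m}\}$ and $\{\dot\rho_{k,l}^{\eps_m}\}$ in $L_2([0,T],\R)$ admit weakly convergent subsequences whose limits integrate to $a_k$ and $\rho_{k,l}$ respectively.

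For (v), each $\eta_k^{\eps_m}$ is a continuous square-integrable martingale with $[\eta_k^{\eps_m}]_t\le q_{k,k}t$, so by the Burkholder--Davis--Gundy inequality the families $\{\eta_k^{\eps_m}(t)\}_m$ and $\{\eta_k^{\eps_m}(t)\eta_l^{\eps_m}(t)\}_m$ are bounded in $L_p$ for every $p\ge 1$, hence uniformly integrable. For any $s<t$, finitely many times $u_1,\ldots,u_j\le s$ and any bounded continuous $\Phi$ on $(\R^n)^j$, the smooth-level martingale identities
\begin{align*}
\E{\Phi(\eta^{\eps_m}(u_1),\ldots,\eta^{\eps_m}(u_j))\bigl(\eta_k^{\eps_m}(t)-\eta_k^{\eps_m}(s)\bigr)}&=0,\\
\E{\Phi(\eta^{\eps_m}(u_1),\ldots,\eta^{\eps_m}(u_j))\bigl(\eta_k^{\eps_m}(t)\eta_l^{\eps_m}(t)-\eta_k^{\eps_m}(s)\eta_l^{\eps_m}(s)-\rho_{k,l}^{\eps_m}(t)+\rho_{k,l}^{\eps_m}(s)\bigr)}&=0
\end{align*}
pass to the corresponding identities for $\eta$ and $\rho$ by a.s.\ convergence and uniform integrability, which together characterise $\eta_k$ as a continuous square-integrable $(\F^{\eta})$-martingale with joint quadratic variation $[\eta_k,\eta_l]=\rho_{k,l}$. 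The main obstacle is ensuring that the limiting martingales live in the smaller filtration $(\F^{\eta})$ rather than in the filtration carried over from $w$, but this is automatic in the functional formulation above since the conditioning data are bounded continuous functionals of $\eta$ itself.
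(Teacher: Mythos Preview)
Your argument is correct and, in several places, takes a different route from the paper's.

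For (i), you supply an explicit proof that $y^{\eps}\ge 0$ via the \Ito formula for $\sum_k((y_k^{\eps})_-)^2$; the paper omits this step entirely (yet needs it, since $f$ is only defined on $[0,\infty)$). For (iii)--(iv), you exploit the uniform Lipschitz bound on $a_k^{\eps_m}$ and $\rho_{k,l}^{\eps_m}$ to get absolute continuity of the limits directly. The paper instead treats the densities $\dot a_k^{\eps_m}$ as random elements in a weakly compact ball $B_r^T\subset L_2([0,T],\R)$, extracts a weakly convergent subsequence via Prokhorov, and then invokes Kuratowski's theorem to make the inverse of $h\mapsto\int_0^{\cdot}h(s)\,ds$ Borel measurable. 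Your route is shorter and suffices here because the integrands happen to be uniformly bounded; the paper's argument is more general but heavier. For (v), the paper cites Proposition~IX.1.17 of Jacod--Shiryaev on preservation of the local martingale property under limits, and then uses the Fisk approximation theorem to identify $\F^{(\eta,\rho)}$ with $\F^{\eta}$; your test-functional argument combined with BDG and uniform integrability is a valid, more self-contained alternative.

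One small gap in your (v): your second displayed identity, with $\Phi$ depending only on $\eta$, shows that $\eta_k\eta_l-\rho_{k,l}$ has the martingale increment property against $\F^{\eta}_s$-measurable test variables, but it does not by itself show that $\rho_{k,l}$ is $\F^{\eta}$-adapted, which is needed to conclude $[\eta_k,\eta_l]=\rho_{k,l}$ in that filtration. The clean fix is to let $\Phi$ depend on $(\eta^{\eps_m},\rho^{\eps_m})$ at the test times, which yields the martingale property of both $\eta_k$ and $\eta_k\eta_l-\rho_{k,l}$ with respect to $\F^{(\eta,\rho)}$; since the quadratic variation is determined pathwise (Fisk approximation), $\rho_{k,l}=[\eta_k,\eta_l]$ is then automatically $\F^{\eta}$-adapted, and the two filtrations coincide. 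This is exactly the reduction the paper carries out.
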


\begin{proof} %
  We remark that for every $k \in [n]$
  \[
    \P{\forall t \geq 0\ \  f_m(y^m_k(t))\to f(y_k(t))\ \ \mbox{as}\ \ m\to\infty}=1,
  \]
  and for every $m\geq 1$ and $k \in [n]$ almost surely 
  \[
    y^m_k(t)=g_k+ \frac{1}{ 2 }\int_{ 0 }^{ t } \Delta^ny^m_k(s)ds+a^m_k(t)+\int_{ 0 }^{ t } f_m(y^m_k(s))ds+\eta^m_k(t), \quad t\geq 0.
  \]
  Passing to the limit and using the dominated convergence theorem, we obtain the equality (i).

  The equality in (ii) follows from Remark~\ref{rem_connection_between_a_and_sigma} and the convergence in distribution of $(a^m_k,\left[ \eta_k^m \right])$ to $(a_k,\rho_{k,k})$ in $\left(\Cf\left( [0,+\infty),\R  \right)\right)^2$.

  We next prove (iii). Let $T>0$ be fixed. Denote the ball in $\L_2\left([0,T], \R \right)$ with center 0 and radius $r>0$ by $B_r^T$ and furnish it with the weak topology of the space $\L_2\left([0,T], \R\right)$, i.e. a sequence $\{h_m\}_{m\geq 1}$ converges to $h$ in $B_r^T$ if $\langle h_m , b \rangle_{\R,T} \to \langle h , b \rangle_{\R ,T}$ for all $b \in B_r^T$. By Alaoglu's Theorem~V.4.2~\cite{Dunford:1988} and Theorem~V.5.1~ibid, $B_r^T$ is a compact metric space. 
  
  We fix $k \in [n]$ and take $r:=\lambda_k\sqrt{T}$, 
  \[
    \dot{a}_k^m(t):=\lambda_k\left( 1-\kappa^2_{m}(y^m_k(t)) \right),\quad t \in [0,T].
  \]
  Then $\dot{a}_k^m$ is a random element in $B_r^T$ for every $m\geq 1$. By the compactness of $B_r^T$, the family $\{ \dot{a}_k^m,\ m\geq 1 \}$ is tight in $B_r^T$. Consequently, Prokhorov's theorem implies the existance of a subsequence $N \subset \N$ such that $\dot{a}_k^m\to\tilde{a}_k$ in $B_r^T$ in distribution along $N$. In particular, for every family $t_1,\dots,t_l \in [0,T]$ and numbers $c_1,\dots,c_l \in \R $,
  \begin{align*}
    \sum_{ i=1 }^{ l } c_i \int_{ 0 }^{ t_i } \dot{a}_k^m(s)ds &=  \int_{ 0 }^{ T }\left( \sum_{ i=1 }^{ l } c_i \I_{[0,t_i]}(s) \right) \dot{a}_k^m(s)ds \\
    & \to \int_{ 0 }^{ T } \left( \sum_{ i=1 }^{ l } c_i\I_{[0,t_i]}(s) \right) \tilde{a}_k(s)ds=\sum_{ i=1 }^{ l } c_i \int_{ 0 }^{ t_i } \tilde{a}_k(s)ds  
  \end{align*}
  in $\R $ in distribution along $N$. Since the family of functions 
  \[
  \left\{ x \mapsto h\left( \sum_{ l=1 }^{ l } c_ix_i \right),\ x=( x_i )_{i=1}^l \in \R^l :\ \ h\ \mbox{is continuous and bounded on}\ \R \right\}
\]
strongly separates points\footnote{see the definition on p. 113~\cite{Ethier:1986}}, Theorem~3.4.5~\cite{Ethier:1986} yields that
  \[
    \left( \int_{ 0 }^{ t_1 } \dot{a}_k^m(s)ds,\dots,\int_{ 0 }^{ t_l } \dot{a}_k^m(s)ds   \right) \to \left( \int_{ 0 }^{ t_1 } \tilde{a}_k(s)ds,\dots,\int_{ 0 }^{ t_l } \tilde{a}_k(s)ds   \right)
  \]
  in $\R^l $ in distribution along $N$. From the other hand side, 
  \[
    \left( \int_{ 0 }^{ t_1 } \dot{a}_k^m(s)ds,\dots,\int_{ 0 }^{ t_l } \dot{a}_k^m(s)ds   \right) \to \left( a_k(t_1),\dots,a_k(t_l)   \right)
  \]
  in $\R^l $ a.s. along $N$. This implies that 
  \begin{equation} %distribution of ak
  \label{equ_distribution_of_ak}
    \law a_k=\law \int_{ 0 }^{ \cdot  } \tilde{a}_k(s)ds .
  \end{equation}
  
  Let us show that there exists a random element $\dot{a}_k$ in $\L_2\left([0,T], \R\right)$ such that $a_k=\int_{ 0 }^{ \cdot  } \dot{a}_k(s)ds $ a.s. We define the map $\Phi: \L_2\left([0,T], \R\right)\to \Cf[0,T]$
  \[
    \Phi(h)(t)=\int_{ 0 }^{ t } h(s)ds,\quad t \in [0,T]. 
  \]
  Remark that $\Phi$ is a bijective map from $\L_2\left([0,T], \R \right)$ to its image 
  \[
    \Im\Phi=\left\{ \Phi(h):\ h \in \L_2\left([0,T], \R \right) \right\}.
  \]
  By the Kuratowski Theorem~A.10.5~\cite{Ethier:1986}, the set $\Im\Phi$ is Borel measurable in $\Cf[0,T]$ and the map $\Phi^{-1}$ is Borel measurable. By~\eqref{equ_distribution_of_ak}, $a_k \in \Im\Phi$ a.s. Thus, we can define $\dot{a}_k=\Phi^{-1}(a_k)$. This completes the proof of (iii).

  Similarly, one can prove (iv).

  Statement (v) follows from the fact that the limit of local martingales is a local martingale and the uniform boundedness of $\E{ \left(\eta_k^m(t)\right)^2 }$ in $m$. Indeed, for every $k,l \in [n]$ the processes $\eta_k^m$ and $\eta_k^m\eta_l^m-[\eta_k^m,\eta_l^m]$ are $(\F^{\eta^m}_t)$-martingales for all $m\geq 1$, and $\left(\eta^m,[\eta^m],\eta_k^m \eta_l^m-[\eta_k^m,\eta_l^m]\right) \to (\eta,\rho,\eta_k \eta_l-\rho_{k,l})$ a.s. as $m\to\infty$. Proposition~IX.1.17~\cite{Jacod:2003} implies that $\eta_k$ and $\eta_k \eta_l -\rho_{k,l}$ are $(\F^{(\eta,\rho)}_t)$-local martingales. Note that, by the Fisk approximation Theorem~17.17 \cite{Kallenberg:2002}, $\F_t^{(\eta,\rho)}=\F_t^{\eta}$, $t \geq 0$. Using the uniform boundedness of $\E{ \left( \eta_k^m(t) \right)^2 }$ in $m$ and Fatou's lemma, one can see that $\eta_k$ is a square-integrable $(\F_t^{\eta})$-martingale. This finishes the proof of the lemma.
\end{proof}
  
\begin{proposition} %identification of y
  \label{pro_identification_of_y}
  Let $\by(t)=(y(t),a(t),\eta(t),\rho(t))$, $t \geq 0$, be as in Lemma~\ref{lem_basic_properties_of_y}. Let additionally $\lambda_k=0$ if $q_{k,k}=0$, $k \in [n]$. Then 
  \begin{enumerate}
    \item[1)] for every $k,l \in [n]$ almost surely
	\[
	  \rho_{k,l}(t)=q_{k,l}\int_{ 0 }^{ t } \I_{\left\{ y_k(s)>0 \right\}}\I_{\left\{ y_l(s)>0 \right\}}ds, \quad t \geq 0.
	\]
    
      \item [2)] for every $k \in [n]$ almost surely
      \[
	a_k(t)=\lambda_k\int_{ 0 }^{ t } \I_{\left\{ y_k(s)=0 \right\}}ds, \quad t \geq 0.
      \]
  \end{enumerate}
\end{proposition}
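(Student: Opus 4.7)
The plan is to carry out component-wise the four-step scheme from Section~\ref{sub:the_main_idea_of_passing_to_the_limit}. Part~1) amounts to identifying the density $\dot\rho_{k,l}$ from Lemma~\ref{lem_basic_properties_of_y}(iv), which I would do in three moves: first recognise $\dot\rho_{k,l}$ as the weak $\L_2$-limit of $\sigma^m_{k,l}(s):=q_{k,l}\kappa_m(y^m_k(s))\kappa_m(y^m_l(s))$; then prove the localisation $\dot\rho_{k,l}(s)=\dot\rho_{k,l}(s)\I_{\{y_k(s)>0\}}\I_{\{y_l(s)>0\}}$ a.e.; then use fact~(b) of Section~\ref{sub:the_main_idea_of_passing_to_the_limit} together with bounded convergence to replace the approximating $\kappa_m$'s by the limiting indicators. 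Part~2) will drop out of the $l=k$ case of part~1) and Lemma~\ref{lem_basic_properties_of_y}(ii).

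For the weak-limit identification, fix $T>0$ and repeat the compactness/tightness argument used in the proof of Lemma~\ref{lem_basic_properties_of_y}(iv): since $\kappa_m\in[0,1]$, the family $\sigma^m_{k,l}$ sits in a fixed weakly compact ball of $\L_2([0,T],\R)$, so any subsequence has a further sub-subsequence converging weakly to some $\tilde h$; but $\int_0^t\sigma^m_{k,l}(s)\,ds=[\eta^m_k,\eta^m_l]_t\to\rho_{k,l}(t)=\int_0^t\dot\rho_{k,l}(s)\,ds$ a.s., so $\tilde h=\dot\rho_{k,l}$ a.e.~and the full sequence converges weakly to $\dot\rho_{k,l}$. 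For the localisation, Lemma~\ref{lem_basic_properties_of_y}(i) and~(v) exhibit $y_k$ as a continuous non-negative semimartingale with martingale part $\eta_k$ of quadratic variation $\int_0^\cdot\dot\rho_{k,k}(s)\,ds$, so the non-negative-semimartingale lemma (Lemma~\ref{lem_quadratic_variation_of_rv_semimartingale}, the analogue of fact~(a)) gives $\dot\rho_{k,k}(s)\I_{\{y_k(s)=0\}}=0$ a.e. The Kunita--Watanabe inequality applied to $\eta_k,\eta_l$ then yields $|\dot\rho_{k,l}(s)|^2\le\dot\rho_{k,k}(s)\dot\rho_{l,l}(s)$ a.e., which combined with the two diagonal identities forces $\dot\rho_{k,l}$ to vanish on $\{y_k=0\}\cup\{y_l=0\}$.

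With the localisation in hand, for each $\omega$ the function $s\mapsto\I_{[0,t]}(s)\I_{\{y_k(\omega,s)>0\}}\I_{\{y_l(\omega,s)>0\}}$ is a legitimate test function in $\L_2([0,T],\R)$, so the weak convergence of $\sigma^m_{k,l}$ yields
\[
\rho_{k,l}(t)=\int_0^t\dot\rho_{k,l}(s)\I_{\{y_k(s)>0\}}\I_{\{y_l(s)>0\}}\,ds=\lim_{m\to\infty}\int_0^t\sigma^m_{k,l}(s)\I_{\{y_k(s)>0\}}\I_{\{y_l(s)>0\}}\,ds.
\]
On $\{y_k(s)>0\}$ one has $y^m_k(s)\to y_k(s)>0$, hence $y^m_k(s)/\eps_m\to+\infty$ and $\kappa_m(y^m_k(s))\to 1$; the same holds for $l$, and since $\kappa_m\le 1$ bounded convergence delivers $\rho_{k,l}(t)=q_{k,l}\int_0^t\I_{\{y_k>0\}}\I_{\{y_l>0\}}\,ds$. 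For part~2), when $q_{k,k}>0$ I combine Lemma~\ref{lem_basic_properties_of_y}(ii) with $\rho_{k,k}(t)=q_{k,k}\int_0^t\I_{\{y_k>0\}}\,ds$ to get $a_k(t)=\lambda_k\bigl(t-\int_0^t\I_{\{y_k>0\}}\,ds\bigr)=\lambda_k\int_0^t\I_{\{y_k=0\}}\,ds$; when $q_{k,k}=0$ the standing assumption $\lambda_k=0$ makes $a^m_k\equiv 0$ for every $m$ and both sides of the claimed identity vanish. The main obstacle is the localisation step, where one must wed the one-dimensional ``quadratic variation vanishes at zeros of the process'' lemma to the Kunita--Watanabe bound; once this is in place, everything reduces to weak-plus-bounded convergence.
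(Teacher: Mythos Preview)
Your proposal is correct and follows essentially the same route as the paper: weak $\L_2$-convergence of $\sigma^m_{k,l}$ to $\dot\rho_{k,l}$, the localisation $\dot\rho_{k,l}=\dot\rho_{k,l}\I_{\{y_k>0\}}\I_{\{y_l>0\}}$, and then fact~(b) with dominated convergence. The only cosmetic difference is that you derive the off-diagonal localisation by hand via Kunita--Watanabe from the diagonal case, whereas the paper's Lemma~\ref{lem_quadratic_variation_of_rv_semimartingale} is already stated for a pair of semimartingales and delivers the cross-variation identity directly (its proof being precisely your diagonal-plus-Kunita--Watanabe argument).
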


\begin{proof} %
  We take the sequence $\left\{\by^m_n\right\}_{n\geq 1}$ as in the proof of Lemma~\ref{lem_basic_properties_of_y}. Again without loss of generality we may assume that it converges to $\by$ a.s. We first show that almost surely
  \[
    \rho_{k,l}(t)=q_{k,l}\int_{ 0 }^{ t } \I_{\left\{ y_k(s)>0 \right\}}\I_{\left\{ y_l(s)>0 \right\}}ds, \quad t \geq 0.
  \]
  Recall that almost surely
  \[
    [\eta^m_k,\eta^m_l]_t=\int_{ 0 }^{ t } \sigma^m_{k,l}(s)ds,\quad t \geq 0, 
  \]
  where $\sigma^m_{k,l}(s)=q_{k,l}\kappa_m(y_k^m(s))\kappa_m(y_l^m(s))$, and for each $T>0$, $k,l \in [n]$ there exist random elements $\dot{\rho}_{k,l}$ in $\L_2\left([0,T], \R \right)$ such that almost surely
  \[
    \rho_{k,l}(t)=\int_{ 0 }^{ t } \dot{\rho}_{k,l}(s)ds,\quad t \in[0,T], 
  \]
  by Lemma~\ref{lem_basic_properties_of_y}. 
  
  Let $T>0$, $k,l \in [n]$ be fixed. By the convergence of the sequence $[\eta^m_k,\eta^m_l]$, $m\geq 1$, to $\rho_{k,l}$ in $\Cf[0,T]$ a.s., the uniform boundedness of $\sigma_{k,l}^m$, and the density of $\spann\left\{\I_{[0,t]},\ t \in[0,T]\right\}$ in $\L_2\left([0,T], \R \right)$, one has that  
  \begin{equation} %convergence of sigma to dotrho
  \label{equ_convergence_of_sigma_to_dotrho}
    \P{ \sigma^m_{k,l} \to \dot{\rho}_{k,l}\ \mbox{in the weak topology of}\ \L_2\left([0,T], \R \right)\ \mbox{as}\ m\to\infty }=1.
  \end{equation}

  By Lemma~\ref{lem_basic_properties_of_y}, $y_k$ and $y_l$ are non-negative continuous semimartingales with quadratic variation $[y_k,y_l]_t=\rho_{k,l}(t)=\int_{ 0 }^{ t } \dot{\rho}_{k,l}(s)ds $, $t \in[0,T]$. Thus, Lemma~\ref{lem_quadratic_variation_of_rv_semimartingale} implies that a.s.
  \[
    \int_{ 0 }^{ t }\dot{\rho}_{k,l}(s)ds=\int_{ 0 }^{ t } \dot{\rho}_{k,l}(s)\I_{\left\{ y_k(s)>0 \right\}} \I_{\left\{ y_l(s)>0 \right\}}ds, \quad t \in[0,T].   
  \]
  The latter equality and~\eqref{equ_convergence_of_sigma_to_dotrho} yield that almost surely for every $t \in[0,T]$
  \begin{align*}
    \rho_{k,l}(t)&= \int_{ 0 }^{ t } \dot{\rho}_{k,l}(s)ds= \int_{ 0 }^{ t } \dot{\rho}_{k,l}(s)\I_{\left\{ y_k(s)>0 \right\}}\I_{\left\{ y_l(s)>0 \right\}}ds\\
    &=\lim_{ m\to\infty }\int_{ 0 }^{ t } \sigma_{k,l}^m \I_{\left\{ y_k(s)>0 \right\}}\I_{\left\{ y_l(s)>0 \right\}}ds \\
    &=  \lim_{ m\to\infty }\int_{ 0 }^{ t } q_{k,l}\kappa_m(y_k^m(s))\kappa_m(y_l^m(s))\I_{\left\{ y_k(s)>0 \right\}}\I_{\left\{ y_l(s)>0 \right\}}ds\\
    &= \int_{ 0 }^{ t } q_{k,l}\I_{\left\{ y_k(s)>0 \right\}}\I_{\left\{ y_l(s)>0 \right\}}ds,
  \end{align*}
  where we have used the convergence $\kappa_m(x_m)\I_{(0,+\infty)}(x) \to \I_{(0,+\infty)}(x)$ as $x_m \to x$ in $\R $ and the dominated convergence theorem. Hence, a.s.
  \[
    \rho_{k,l}(t)=\int_{ 0 }^{ t } q_{k,l}\I_{\left\{ y_k(s)>0 \right\}}\I_{\left\{ y_l(s)>0 \right\}}ds, \quad t\geq 0,
  \]
  and, consequently, according to Lemma~\ref{lem_basic_properties_of_y}~(ii), almost surely
  \[
    a_k(t)=\lambda_k\left( 1- \frac{1}{ q_{k,k} }\rho_{k,k}(t) \right)=\lambda_k \int_{ 0 }^{ t } \I_{\left\{ y_k(s)=0 \right\}}ds, \quad t\geq 0,
  \]
  for all $k \in [n]$ such that $q_{k,k}=0$. If $q_{k,k}=0$, then $\lambda_k=0$, by the assumption of Proposition~\ref{pro_identification_of_y}. Therefore, $a_k^m=0$ implies that $a_k=0$. This finishes the proof of the propostion.

\end{proof}

\begin{proof}[Proof of Theorem~\ref{the_existence_of_solutions_to_system_of_sde}] %
  The statement of the theorem directly follows from Lemma~\ref{lem_basic_properties_of_y} and Proposition~\ref{pro_identification_of_y}.
\end{proof}

\subsection{Tightness}%
\label{sub:tightness}

Let a family of non-negative continuous processes $\left\{ x_k^n(t),\ t\geq 0,\ k \in [n] \right\}$ be a weak solution to SDE~\eqref{equ_system_of_sde}-\eqref{equ_boundary_conditions_for_sde}, which exists according to Theorem~\ref{the_existence_of_solutions_to_system_of_sde}. Let the continuous process $\tilde{X}^n_t$, $t\geq 0$, taking values in $\Cf[0,1]$ be defined by~\eqref{equ_process_tilda_x}. We note that $\tilde{X}^n_t(u)\geq 0$ for all $u \in[0,1]$, $t\geq 0$ and $n\geq 1$. 

The aim of this section is to prove the tightness of the family $\left\{ \tilde{X}^n,\ n\geq 1 \right\}$ in $\Cf\left( [0,\infty),\Cf[0,1] \right)$. The similar problem was considered in~\cite[Section~2]{Funaki:1983}, where the author study the existence of solutions to an SPDE with Lipschitz continuous coefficients. The tightness argument there is based on properties of fundamental solution to the discrete analog of the heat equation and the fact that coefficients of the equation has at most linear growth. The Lipschitz continuity was not needed for the proof of the tightness. Since the proof in our case repeats the proof from~\cite{Funaki:1983}, we will point out only its main steps. The main statement of this sections reads as follows.

\begin{proposition} %tightness of tilde X
  \label{pro_tightness_of_tilde_x}
  The family of processes $\left\{ \tilde{X}^n,\ n\geq 1 \right\}$ is tight in $\Cf\left( [0,\infty),\Cf[0,1] \right)$.
\end{proposition}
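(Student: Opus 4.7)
The plan is to follow Funaki's argument from~\cite{Funaki:1983} adapted to our colored noise setting. First I would write a discrete Duhamel representation for $x^n_k(t)$: letting $p^n_t(k,l)$ denote the fundamental solution of the discrete heat equation $\partial_t u = \tfrac12 \Delta^n u$ on $[n]$ with the boundary conditions~\eqref{equ_boundary_conditions_for_sde} determined by $\alpha_0$, one has
\[
  x^n_k(t) = \frac{1}{n}\sum_{l=1}^n p^n_t(k,l) g^n_l
  + \frac{1}{n}\int_0^t \sum_{l=1}^n p^n_{t-s}(k,l)\bigl[\lambda_l^n \I_{\{x^n_l(s)=0\}} + f(x^n_l(s))\bigr] ds + M^n_k(t),
\]
where $M^n_k(t)=\sqrt{n}\int_0^t \sum_l p^n_{t-s}(k,l)\I_{\{x^n_l(s)>0\}}dw_l^n(s)$ is the stochastic convolution. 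The piecewise-linear process $\tilde X^n$ inherits an analogous representation in $\Cf[0,1]$ up to a negligible interpolation error.

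Next I would establish three ingredients. \emph{Moment bounds:} using It\^o's formula on $\sum_k (x^n_k)^{2p}$ together with the Neumann/Dirichlet-type inequality $\langle x^n, \Delta^n x^n\rangle_{\R^n}\le 0$ (up to an $\alpha_0$-boundary term), the linear growth of $f$, the bound $\sum_k \tfrac{1}{n}(\lambda_k^n)^2 \le \|\lambda\|^2$, and the Hilbert--Schmidt bound $\sum_k q_{k,k}^n = n \sum_k \|Q\pi_k^n\|^2 \le \|Q\|_{\HS}^2$, one obtains $\sup_n \sup_{t\le T} \E{\|\tilde X^n_t\|^{2p}_{\Cf[0,1]}} \le C_{p,T}$, essentially as in the proof of Lemma~\ref{lem_tightness_of_y_eps} but uniformly in $n$. \emph{Discrete heat kernel estimates:} I would import (or reprove) from~\cite{Funaki:1983} the standard uniform bounds on $p^n$, in particular $\frac{1}{n}\sum_l (p^n_t(k,l))^2 \le C/\sqrt{t\vee n^{-2}}$, together with H\"older-type estimates in the two space variables and in time of the form $|p^n_t(k,l)-p^n_s(k,l)|\le C|t-s|^{\gamma}/(s\wedge t)^{\gamma+1/2}$, and the analogous spatial increment bound. \emph{Kolmogorov--\v Centsov estimate:} combining the two previous ingredients one estimates each of the three summands in the Duhamel formula to obtain
\[
  \E{\bigl|\tilde X^n_t(u)-\tilde X^n_s(v)\bigr|^{2p}} \le C_{p,T}\bigl(|t-s|^{\alpha p}+|u-v|^{\beta p}\bigr),\qquad 0\le s,t\le T,\ u,v\in[0,1],
\]
for some $\alpha,\beta>0$, and then picks $p$ large enough to apply the Kolmogorov--\v Centsov criterion, yielding tightness of $\{\tilde X^n\}$ in $\Cf([0,T],\Cf[0,1])$ for every $T>0$ and therefore in $\Cf([0,\infty),\Cf[0,1])$.

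The main technical obstacle is the martingale term $M^n_k$. Its second moment equals
\[
  \E{(M^n_k(t))^2} = n\int_0^t \Bigl\|\sum_l p^n_{t-s}(k,l)\,\I_{\{x^n_l(s)>0\}} Q\pi_l^n\Bigr\|^2 ds,
\]
and one must trade the $Q$-covariance structure, which couples different coordinates, against the Hilbert--Schmidt property to recover the same type of kernel bound as in the white-noise case of~\cite{Funaki:1983}. Concretely, I would expand $Q$ in its eigenbasis $\{e_k,\mu_k\}$, use $\sum_k \mu_k^2<\infty$, and reduce the estimate to $\ell^2$-norms of $\sum_l p^n_{t-s}(k,l) \langle e_j,\pi_l^n\rangle$; together with the heat-kernel square-integral bound this yields the $|t-s|^{\alpha p}$ increment. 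The drift with $\lambda$ and $f$ is easier because both are controlled in $\L_2$ and $\L_\infty$ respectively, so the corresponding deterministic convolution is H\"older in time and space uniformly in $n$ by standard calculus on the kernel. Boundary effects (Neumann versus Dirichlet) enter only through the sign of $\alpha_0$ in the discrete Green kernel and are handled identically in both cases.
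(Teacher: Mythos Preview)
Your approach is essentially the paper's: the Duhamel representation via the discrete heat kernel, pointwise moment bounds (the paper's Lemma~\ref{lem_boundedness_of_the_expectation_of_tilde_x}), kernel-based increment estimates (Lemma~\ref{lem_estimate_of_increments_of_tilde_x}), and the Kolmogorov criterion, all lifted from Funaki~\cite{Funaki:1983}. Two minor remarks: the uniform-in-$n$ pointwise moment bound actually comes from the Duhamel formula and kernel estimates rather than from the It\^o argument of Lemma~\ref{lem_tightness_of_y_eps} (which produces an $n$-dependent constant), and the colored-noise martingale term is easier than you suggest---the Hilbert--Schmidt property of $Q$ makes the stochastic convolution \emph{more} regular than in the white-noise setting of~\cite{Funaki:1983}, so Funaki's kernel estimates transfer with room to spare.
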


For the proof of the proposition it is enough to show that the family $\left\{ \tilde{X}^n,\ n\geq 1 \right\}$ is tight in $\Cf\left( [0,T],\Cf[0,1] \right)=\Cf\left( [0,T] \times [0,1], \R  \right)$ for every $T>0$. So, we fix $T>0$, and use Corollary~16.9~\cite{Kallenberg:2002} which yields the tightness if $\left\{ \tilde{X}^n,\ n\geq 0 \right\}$ satisfies the following conditions:
\begin{enumerate}
  \item [1)] $\left\{ \tilde{X}^n_0(0),\ n\geq 1 \right\}$ is tight in $\R $;

\item [2)] there exist constants $\alpha,\beta,C>0$ such that 
\[
  \E{ |\tilde{X}^n_t(u)-\tilde{X}^n_s(v)|^{\alpha} }\leq C\left(|t-s|^{2+\beta}+|u-v|^{2+\beta}\right)
\]
for all $t,s \in [0,T]$, $u,v \in [0,1]$, and $n\geq 1$.
\end{enumerate}

The family $\left\{ \tilde{X}^n,\ n\geq 1 \right\}$ trivially satisfies the first condition because $\tilde{X}^n_0(0)=g_1^n$ is uniformly bounded in $n\geq 1$. In order to check the second condition, we first write equation~\eqref{equ_system_of_sde} in the integral form. Let $\{p^n_{k,l}(t),\ t\geq 0,\ k,l \in [n]\}$ be the fundamental solution of the system of ordinary differential equations\footnote{for more details about properties of the fundamental solution to the discrete analog of the heat equation see e.g.~\cite[Appendix~II]{Funaki:1983}} 
\[
  \frac{d}{ dt }p^n_{k,l}(t)= \frac{1}{ 2 }\Delta^n_{(k)}p^n_{k,l}(t), \quad t>0, \ k,l \in [n],
\]
with the initial condition 
\[
  p^n_{k,l}(0)=n \I_{\left\{ k=l \right\}}, \quad k,l \in [n],
\]
and the boundary conditions 
\[
  p^n_{0,l}(t)=\alpha_0p^n_{1,l}(t),\quad p^n_{n+1,l}(t)=\alpha_0p^n_{n,l}(t),\quad t\geq 0,\ l \in [n],
\]
where the operator $\Delta^n_{(k)}=\Delta^n$ is applied to the vector $(p_{k,l}(t))_{k=1}^n$ for every $l \in [n]$. Noting that $\left\{ \left\langle W_t , \sqrt{ n }\pi_k^n \right\rangle,\ t\geq 0,\ k \in [n] \right\}$ is a family of standard Brownian motions, it is easily seen that $\tilde{X}^n$ has the same distribution as the solution to the integral equation 
\begin{equation} %equation in integral form
  \label{equ_equation_in_integral_form}
  \begin{split}
    \tilde{X}^n_t(u)&=\int_{ 0 }^{ 1 } p^n(t,u,v)g(v)dv+\int_{ 0 }^{ t } \int_{ 0 }^{ 1 } p^n(t-s,u,v)\tilde{\lambda}^n(v)\I_{\left\{ \tilde{X}^n_s(\lceil v \rceil)=0 \right\}}dsdv\\
  &+ \int_{ 0 }^{ t } \int_{ 0 }^{ 1 }  p^n(t-s,u,v)f\left(\tilde{X}^n_s(\lceil v\rceil)\right)dsdv \\
  &+ \int_{ 0 }^{ t } \int_{ 0 }^{ 1 } p^n(t-s,u,v) \I_{\left\{ \tilde{X}^n_s(\lceil v\rceil)>0 \right\}}Q dW_sdu, \quad t\geq 0,\ u \in [0,1],  
  \end{split}
\end{equation}
where 
\begin{align*}
  p^n\left(t,u,v\right)&= (1-n(\lceil u\rceil-u))p^n_{k,n\lceil v\rceil}(t)+(\lceil u\rceil-u)p^n_{k,n\lceil v\rceil-1}(t),\quad t\geq 0,\\
  \tilde{\lambda}^n(v)&= \lambda(v)\I_{\left\{ q_{n\lceil v\rceil,n\lceil v\rceil}^n>0 \right\}}, \quad v \in [0,1],
\end{align*}
and $\lceil v\rceil = \lceil v\rceil^n:= \frac{l}{ n }$ for $v \in \pi_l^n$, $l \in [n]$. We will denote by $\tilde{X}^{n,i}_t(u)$ the $i$-th term of the right hand side of equation~\eqref{equ_equation_in_integral_form}.

\begin{lemma} %boundedness of the expectation of tilde X
  \label{lem_boundedness_of_the_expectation_of_tilde_x}
  For every $\gamma>0$ and $T>0$ there exists a constant $C>0$ such that 
  \[
    \E{ \left(\tilde{X}^n_t(u)\right)^{\gamma} }\leq C
  \]
  for all $t \in[0,T]$, $u \in[0,1]$ and $n\geq 1$.
\end{lemma}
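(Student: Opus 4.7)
The plan is to work from the mild form \eqref{equ_equation_in_integral_form} and estimate each of the four pieces $\tilde X^{n,i}_t(u)$ separately, closing the bookkeeping with Gronwall's lemma. By Jensen's inequality it suffices to treat $\gamma=2p$ with an integer $p\ge 1$. Since the finite-dimensional SDE~\eqref{equ_system_of_sde} has coefficients of linear growth, all moments of $\tilde X^n_t(u)$ are a priori finite, so $\phi_n(t):=\sup_{u\in[0,1]}\mathbb E[\tilde X^n_t(u)^{\gamma}]$ is finite for each $n$ and $t\in[0,T]$, and Gronwall will apply. The argument leans on three standard estimates for the discrete heat kernel $p^n$ collected in Appendix~II of \cite{Funaki:1983}: the mass bound $\int_0^1 p^n(t,u,v)\,dv\le 1$, the pointwise bound $p^n(t,u,v)\le C\,t^{-1/2}$, and consequently $\|p^n(t,u,\cdot)\|^2\le C\,t^{-1/2}$, all uniform in $n,u,v$.

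For $i=1$, one has $\tilde X^{n,1}_t(u)\le\|g\|_\infty$. For $i=2$, the bound $\tilde\lambda^n\le\lambda\in\L_2$ and Cauchy--Schwarz in $v$ give
$$\tilde X^{n,2}_t(u)\le\|\lambda\|\int_0^t\|p^n(t-s,u,\cdot)\|\,ds\le C.$$
For $i=3$, Jensen's inequality applied against the subprobability measure $T^{-1}p^n(t-s,u,v)\,ds\,dv$ together with the linear growth of $f$ yields
$$\mathbb E[\tilde X^{n,3}_t(u)^{\gamma}]\le C\int_0^t\!\!\int_0^1 p^n(t-s,u,v)\bigl(1+\mathbb E[\tilde X^n_s(\lceil v\rceil)^{\gamma}]\bigr)dv\,ds\le C\int_0^t\bigl(1+\phi_n(s)\bigr)ds.$$
For $i=4$, fixing $t$ and $u$, the random variable $\tilde X^{n,4}_t(u)$ is the terminal value of the real-valued continuous $L^2$-martingale $r\mapsto \int_0^r\langle p^n(t-s,u,\cdot)\I_{\{\tilde X^n_s>0\}},\,Q\,dW_s\rangle$, $r\in[0,t]$, whose quadratic variation at $r=t$ is dominated by
$$\int_0^t\bigl\|Q\bigl(p^n(t-s,u,\cdot)\I_{\{\tilde X^n_s>0\}}\bigr)\bigr\|^2\,ds\le \|Q\|_{\HS}^2\int_0^t\|p^n(t-s,u,\cdot)\|^2\,ds\le C,$$
deterministically, so the Burkholder--Davis--Gundy inequality gives $\mathbb E[\tilde X^{n,4}_t(u)^{\gamma}]\le C$.

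Assembling the four bounds produces $\phi_n(t)\le C+C\int_0^t\phi_n(s)\,ds$ with $C$ independent of $n$, and Gronwall's lemma finishes the proof. The main technical input is the uniform-in-$n$ short-time estimate $\int_0^t\|p^n(t-s,u,\cdot)\|^2\,ds\le C_T$: without it the BDG control of $\tilde X^{n,4}$ would deteriorate with $n$, and the Gronwall step would fail. These Gaussian-type bounds for the discrete heat kernel are already furnished by the cited appendix of \cite{Funaki:1983}, and are exactly the estimates that make the polygonal approximation work for the regular-coefficient SPDE treated there; adapting them is the one genuine piece of work, the rest being routine bookkeeping.
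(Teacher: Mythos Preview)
Your argument is correct and is precisely the approach the paper intends: it explicitly defers to Lemma~2.1 of \cite{Funaki:1983}, whose proof proceeds exactly as you outline---splitting the mild form into the four summands $\tilde X^{n,i}$, invoking the uniform-in-$n$ kernel bounds from Appendix~II of \cite{Funaki:1983} (mass bound and $\|p^n(t,u,\cdot)\|^2\le Ct^{-1/2}$), controlling the stochastic term via BDG, and closing with Gronwall on $\phi_n(t)=\sup_u\E{\tilde X^n_t(u)^\gamma}$. The only new ingredients relative to Funaki's setting are the bounded indicator coefficients and the $\lambda$-term, which you handle correctly; your observation that the a~priori finiteness of $\phi_n$ (needed to feed Gronwall) follows from linear growth of the coefficients of the fixed-$n$ SDE~\eqref{equ_system_of_sde} is the right way to justify that step.
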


\begin{lemma} %estimate of increments of tilde X
  \label{lem_estimate_of_increments_of_tilde_x}
  For each $\gamma \in \N$ and $T>0$ there exists a constant $C>0$ such that 
  \[
    \E{ \left|\tilde{X}^{n,i}_{t_2}(u_2)-\tilde{X}^{n,i}_{t_1}(u_1)\right|^{2 \gamma} }\leq C\left( |t_2-t_1|^{\frac{ \gamma }{ 2 }}+|u_2-u_1|^{\frac{ \gamma }{ 2 }} \right)
  \]
  for every $t_1,t_2 \in[0,T]$, $u_1,u_2 \in[0,1]$, $n\geq 1$ and $i \in [4]$.
\end{lemma}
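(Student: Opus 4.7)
The plan is to estimate each of the four summands $\tilde{X}^{n,i}$ separately, writing a general increment as
\[
\tilde{X}^{n,i}_{t_2}(u_2)-\tilde{X}^{n,i}_{t_1}(u_1) = \bigl[\tilde{X}^{n,i}_{t_2}(u_2)-\tilde{X}^{n,i}_{t_2}(u_1)\bigr] + \bigl[\tilde{X}^{n,i}_{t_2}(u_1)-\tilde{X}^{n,i}_{t_1}(u_1)\bigr]
\]
so that each half is either purely spatial or purely temporal, and then reducing each piece to an integrated bound on the discrete fundamental solution $p^n$. The required toolkit is the standard one catalogued in Appendix~II of~\cite{Funaki:1983}, uniform in $n$: a mass bound $\sup_u\int_0^1 p^n(t,u,v)\,dv\le C$, the $L^2$-bound $\int_0^t\!\int_0^1 p^n(s,u,v)^2\,dv\,ds\le C\sqrt{t}$, and the corresponding time- and space-difference bounds in $L^1(dv)$ and $L^2(ds\,dv)$, of fractional order $1/2$ in $|t_2-t_1|$ and in $|u_2-u_1|$.

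For $i=1$ (the initial-condition term) the estimate is deterministic: since $g\in\Cf[0,1]$ is bounded, the increment is controlled by $\|g\|_\infty$ times $\int_0^1|p^n(t_2,u_2,v)-p^n(t_1,u_1,v)|\,dv$, and the $L^1$ kernel-difference bounds immediately yield the required exponent $\gamma/2$ after the $2\gamma$-th power. For $i=2$, the integrand $\tilde\lambda^n\I_{\{\tilde X^n_s(\lceil\cdot\rceil)=0\}}$ lies only in $\L_2(dv)$ because $\lambda\in\L_2$; I would apply Cauchy--Schwarz in $v$ and finish with the $L^2$-difference bounds of $p^n$. For $i=3$, linear growth of $f$ gives $|f(\tilde X^n_s(\lceil v\rceil))|\le C(1+\tilde X^n_s(\lceil v\rceil))$; then Minkowski's inequality moves the $L^{2\gamma}(\p)$-norm inside the double integral, Lemma~\ref{lem_boundedness_of_the_expectation_of_tilde_x} supplies a uniform-in-$(n,s,v)$ moment bound on the integrand, and the remaining kernel integrals are handled exactly as in the case $i=1$.

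The stochastic term $i=4$ is the main case. The first step is the Burkholder--Davis--Gundy inequality, which reduces the $2\gamma$-th moment to the $\gamma$-th moment of the quadratic variation. Using the definition of the cylindrical Wiener process $W$ and the boundedness of $Q$ on $\L_2$, the quadratic variation is dominated, up to a factor $\|Q\|_{\mathrm{op}}^2$, by integrals in $s$ of squared $\L_2(dv)$-norms of $p^n(t-s,u,\cdot)\I_{\{\tilde X^n_s(\lceil\cdot\rceil)>0\}}$ (and its spatial and temporal analogues coming from the splitting above). The $L^2$ kernel bounds and their difference estimates then give contributions of order $|u_2-u_1|$ and $|t_2-t_1|^{1/2}$, and raising to the $\gamma$-th power yields the claimed exponents.

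The only non-routine ingredient, and hence the main obstacle, is the uniform-in-$n$ verification of the discrete kernel-difference estimates; these are precisely the bounds in Appendix~II of~\cite{Funaki:1983}, and their $1/2$-Hölder character in $(u,\sqrt t)$ is what dictates the exponent $\gamma/2$ on the right-hand side (rather than $\gamma$). Once they are available, a single triangle inequality combines the four partial estimates, and the final constant $C$ can be chosen to absorb $T$, $\|g\|_\infty$, $\|\lambda\|$, $\|Q\|_{\mathrm{op}}$, the linear-growth constant of $f$, and the moment bound from Lemma~\ref{lem_boundedness_of_the_expectation_of_tilde_x}.
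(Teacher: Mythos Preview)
Your proposal is correct and follows exactly the approach the paper indicates: the paper does not give a proof at all but simply writes that one ``needs to repeat the proofs of lemmas~2.1 and~2.2 from~\cite{Funaki:1983} which are based on properties of the fundamental solution $p^n(t,u,v)$ \ldots\ and the fact that the coefficients of the equation have at most linear growth,'' and then omits the details. Your outline---splitting into spatial and temporal increments, invoking the uniform-in-$n$ $L^1$ and $L^2$ kernel estimates from Appendix~II of~\cite{Funaki:1983}, using Minkowski plus Lemma~\ref{lem_boundedness_of_the_expectation_of_tilde_x} for $i=3$, and BDG plus the $L^2$ kernel-difference bounds for $i=4$---is precisely that Funaki argument, so there is nothing to add.
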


To prove lemmas~\ref{lem_boundedness_of_the_expectation_of_tilde_x} and~\ref{lem_estimate_of_increments_of_tilde_x}, one needs to repeat the proofs of lemmas~2.1 and~2.2 from~\cite{Funaki:1983} which are based on properties of the fundamental solution $p^n(t,u,v)$, $t \in[0,T]$, $u,v \in [0,T]$, and the fact that the coefficients of the equation has at most the linear growth. We omit the proof of those lemmas here. 

Proposition~\ref{pro_tightness_of_tilde_x} follows from Lemma~\ref{lem_estimate_of_increments_of_tilde_x}.

\begin{remark} %properties of a limit points of tilde X
  \label{rem_properties_of_a_limit_points_of_tilde_x}
  Let $\tilde{X}_t$, $t\geq 0$, be a limit point of the sequence $\left\{ \tilde{X}^n,\ n\geq 1 \right\}$ in $\Cf\left( [0,\infty),\Cf[0,1] \right)$, i.e $X$ is a limit in distribution of a subsequence of $\left\{ \tilde{X}^n,\ n\geq 1 \right\}$. Then the map $(t,u)\mapsto \tilde{X}_t(u)$ is a.s. locally H\"older continuous with exponent $\alpha \in (0,1/4)$, according to Lemma~\ref{lem_estimate_of_increments_of_tilde_x} and Corollary~16.9~\cite{Kallenberg:2002}. Moreover, Lemma~\ref{lem_boundedness_of_the_expectation_of_tilde_x} and Lemma~4.11~\cite{Kallenberg:2002} imply that for every $\gamma>0$ and $T>0$ there exists a constant $C=C(T,\gamma)$ such that
  \[
    \E{ \left(\tilde{X}_t(u)\right)^{\gamma} }\leq C,\quad t \in[0,T],\ u \in[0,1].
  \]
\end{remark}

\section{Passing to the limit}%
\label{sec:passing_to_the_limit}

The goal of the present section is to show that there exists a solution to SPDE~\eqref{equ_main_equation}-~\eqref{equ_initial_condition}. The solution will be constructed as a limit point of the family of processes $\left\{\tilde{X}^n,\ n\geq 1\right\}$ from Proposition~\ref{pro_tightness_of_tilde_x}, which exists by Prokhorov's theorem. Since the coefficients of the equation are discontinuous, we cannot pass to the limit directly. In the next section, we will show that there exists a subsequence of $\left\{\tilde{X}^n,\ n\geq 1\right\}$ whose weak limit in $\Cf\left( [0,\infty),\Cf[0,1] \right)$ is a heat semimartingale\footnote{we call continuous processes in $\L_2$ satisfying~\eqref{equ_heat_semimartingale} below a {\it heat semimartingales}}. After that we will prove an analog of the \Ito formula and state a property similar to one for usual $\R $-valued semimartingales, stated in Lemma~\ref{lem_quadratic_variation_of_rv_semimartingale}, for such heat semimartingales. Then, using the argument described in Section~\ref{sub:the_main_idea_of_passing_to_the_limit}, we show that $\tilde{X}$ solves equation~\eqref{equ_main_equation}-\eqref{equ_initial_condition}. In this section, $T>0$ will be fixed.

\subsection{Martingale problem for limit points of the discrete approximation}%
\label{sub:martingale_problem_for_limit_points_of_the_discrete_approximation}

We start from the introduction of a new metric space where we will consider the convergence. Denote 
\[
  r_0:=\left(1+\|\lambda\|+\left\|Q\right\|_{\HS}\right)\sqrt{ T },
\]
and consider the following balls
\begin{align*}
  \rB\left( \L_2 \right):&= \left\{f \in \LL{ \L_2}:\ \left\|f\right\|_{\L_2,T}\leq r_0\right\},\\
  \rB\left( \HS \right):&= \left\{L \in \LL{ \HS}:\ \left\|L\right\|_{\HS,T}\leq r_0\right\}
\end{align*}
in the Hilbert spaces $\LL{ \L_2}$ and $\LL{ \HS}$, respectively. We furniture this sets with the induced weak topologies. By Theorem~V.5.1~\cite{Dunford:1988}, those spaces are metrizable. Moreover, by Alaoglu's Theorem~V.4.2~\cite{Dunford:1988}, they are compact metric spaces.

For every $n\geq 1$ we take the family of processes $\left\{ x_k^n(t),\ t \in [0,T],\ k \in [n] \right\}$ that is a solution to SDE~\eqref{equ_system_of_sde}-~\eqref{equ_boundary_conditions_for_sde}. Let $\tilde{X}^n_t$, $t \in[0,T]$, be the continuous process in $\Cf[0,1]$ defined by~\eqref{equ_process_tilda_x}, that is, 
\[
  \tilde{X}^n_t(u)=(un-k+1)x_k^n(t)+(k-nu)x^n_{k-1}(t),\quad u \in [0,1],\ \ t \in[0,T].
\]
Let us also introduce the process 
\[
  X^n_t:= \sum_{ k=1 }^{ n } x_k^n(t)\pi_k^n,\quad t \in[0,T],
\]
where $\pi_k^n=\I_{\left\{ \left[ \frac{ k-1 }{ n }, \frac{k}{ n } \right) \right\}}$. Set
\begin{align*}
  \lambda^n:&=\sum_{ k=1 }^{ n } n \langle \lambda , \pi_k^n \rangle \I_{\left\{ q_{k,k}^n>0 \right\}}\pi_k^n \in \L_2,\\
  L_t^n:&=Q\mo{\I_{\left\{ X_t^n>0 \right\}}}\pr^n,\quad t \in[0,T],
\end{align*}
%We will write $\I_{\left\{ X^n_{\cdot }>0 \right\}}$ and $\lambda^n\I_{\left\{ X^n_{\cdot }=0 \right\}}$ for the random elements in $\LL{ \L_2 }$ defined as
%\[
%  \I_{\left\{ X^n_t>0 \right\}}, \quad t \in[0,T], \quad \mbox{and} \quad \lambda^n\I_{\left\{ X^n_{t}=0 \right\}}, \quad t \in[0,T],
%\]
%respectively.
and 
\[
  \Gamma^n_t:=\left( L_t^n \right)^*L_t^n=\pr^n \mo{\I_{\{X^n_t>0\}}}Q^2\mo{\I_{\left\{ X_t^n>0 \right\}}}\pr^n,\quad t \in[0,T].
\]

We can trivially estimate $\|\lambda^n\|\leq \|\lambda\|$ and 
\[
  \|\Gamma^n_t\|_{\HS}\leq \left\|Q\mo{\I_{\left\{ X_t^n>0 \right\}}}\pr^n\right\|_{\HS}^2\leq \left\|Q\right\|_{\HS}^2,\quad  t \in[0,T].
\]
The latter inequality follows from Lemma~\ref{lem_norm_of_composition_of_hilbert_shmidt_operators}. Hence $\lambda^n \I_{\{X^n=0\}}$ and $\Gamma^n$ are random elements in $\LL{ \L_2}$ and $\LL{ \HS}$, respectively.
Let us consider the random element
\begin{equation} %vector bX
  \label{equ_vector_bx}
  \bX^n:=\left( \tilde{X}^n,X^n,\lambda^n \I_{\left\{ X^n=0 \right\}}, \I_{\left\{ X^n>0 \right\}},\Gamma^n \right),\quad n\geq 1,
\end{equation}
in the complete separable metric space
\[
  \W_{\L_2}=\Cf\left([0,T], \Cf[0,1] \right)\times \Cf\left([0,T], \L_2 \right)\times \rB\left(\L_2\right)^2 \times \rB\left(\HS\right).
\]
The following statement is the main result of this section.

\begin{proposition} %subsequence of Xn
  \label{pro_subsequence_of_xn}
  There exists a subsequence of $\left\{\bX^n,\ n\geq 1\right\}$ which converges to $\bX=\left( \tilde{X},X,a,\sigma,\Gamma \right)$ in $\W_{\L_2}$ in distribution. Moreover, the limit $\bX$ satisfies the following properties:
  \begin{enumerate}
    \item [(i)] $\tilde{X}_t=X_t$ in $\L_2$ for all $t \in[0,T]$ a.s. and, $a=\lambda(1-\sigma)$ in $\LL{ \L_2}$ a.s.;

    \item [(ii)] there exists a random element $L$ in $\LL{ \HS}$ such that 
      \[
	\P{ L^2=\Gamma\ \ \mbox{and}\ \ L\ \mbox{is self-adjoint a.e.} }=1,
      \]
     and
      \begin{equation} %boundedness of expectation of norm of L
  \label{equ_boundedness_of_expectation_of_norm_of_l}
        \E{ \int_{ 0 }^{ T } \left\|L_t\right\|_{\HS}^2dt  }< +\infty;
      \end{equation}

    \item [(iii)] there exists a continuous square-integrable $(\F_t^{X,M})$-martingale $M_t$, $t \in[0,T]$, in $\L_2$ such that for every $\varphi \in \Cf_{\alpha_0}^2[0,1]$ 
      \begin{equation} %martingale problem for limit of Mn
  \label{equ_martingale_problem_for_limit_of_mn}
              \begin{split}
	        \langle X_t , \varphi \rangle&= \langle g , \varphi \rangle+ \frac{1}{ 2 }\int_{ 0 }^{ t } \left\langle X_s , \varphi'' \right\rangle ds+\int_{ 0 }^{ t }\langle a_s , \varphi \rangle ds \\
		&+\int_{ 0 }^{ t } \langle f(X_s) , \varphi \rangle ds   +\langle M_t , \varphi \rangle , \quad t \in[0,T],
	      \end{split}
      \end{equation}
      and 
      \[
	\left[ \langle M_{\cdot } , \varphi \rangle \right]_t=\int_{ 0 }^{ t } \|L_s \varphi\|^2ds, \quad t \in[0,T].
      \]
  \end{enumerate}
\end{proposition}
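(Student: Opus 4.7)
The plan is to follow the template of Steps I--IV from Section~\ref{sub:the_main_idea_of_passing_to_the_limit}: obtain tightness of $\bX^n$ in $\W_{\L_2}$, extract an a.s.-convergent subsequence via Prokhorov and Skorokhod, identify the easy pieces of the limit, and then build $L$ and $M$ by hand so that the martingale characterisation holds.

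For tightness, $\tilde{X}^n$ is tight in $\Cf\left([0,T],\Cf[0,1]\right)$ by Proposition~\ref{pro_tightness_of_tilde_x}, while $\lambda^n\I_{\{X^n=0\}}$, $\I_{\{X^n>0\}}$ and $\Gamma^n$ take values by construction in the compact metrizable balls $\rB(\L_2)$ and $\rB(\HS)$ and hence are automatically tight. For $X^n$ I would bound $\sup_{t\leq T}\|X^n_t-\tilde{X}^n_t\|$ by a constant times the maximal nearest-neighbour increment of $(x^n_k(t))$, control the latter via the moment estimates of Remark~\ref{rem_properties_of_a_limit_points_of_tilde_x}, and deduce tightness of $X^n$ in $\Cf\left([0,T],\L_2\right)$; the same estimate then forces $\tilde{X}_t=X_t$ in $\L_2$ in the limit, which gives the first half of~(i). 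For the drift identity $a=\lambda(1-\sigma)$ I would write
\[
  \lambda^n\I_{\{X^n=0\}}=\lambda^n-\lambda^n\I_{\{X^n>0\}},
\]
show that $\lambda^n\to\lambda$ strongly in $\L_2$ (essentially $\lambda^n$ is the piecewise-constant projection $\pr^n\lambda$, and the extra indicator $\I_{\{q^n_{k,k}>0\}}$ kills no mass in the limit under~\eqref{equ_the_main_condition_for_existance}), and combine this with the weak convergence $\I_{\{X^n>0\}}\to\sigma$ in $\rB(\L_2)$ by a strong-times-weak argument.

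For~(ii) I would define $L_t$ pointwise as the unique non-negative self-adjoint square root $\Gamma_t^{1/2}$ supplied by the spectral theorem; then $L^2=\Gamma$ and self-adjointness of $L$ hold by construction, joint measurability of $t\mapsto L_t$ follows from continuity of the square root on bounded self-adjoint operators, and the bound~\eqref{equ_boundedness_of_expectation_of_norm_of_l} is obtained by expanding $\|L_t\|_{\HS}^2=\operatorname{Tr}(\Gamma_t)=\sum_j\langle\Gamma_t f_j,f_j\rangle$ along a fixed orthonormal basis $\{f_j\}$, testing the weak convergence $\Gamma^n\to\Gamma$ in $\rB(\HS)$ against the (constant-in-$t$) HS operators $f_j\odot f_j$, and applying Fatou in $j$ together with the uniform bound $\operatorname{Tr}(\Gamma^n_t)=\|L^n_t\|_{\HS}^2\leq\|Q\|_{\HS}^2$. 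For~(iii), pairing SDE~\eqref{equ_system_of_sde} against $\varphi\in\Cf^2_{\alpha_0}[0,1]$ and using discrete summation by parts together with the boundary conditions~\eqref{equ_boundary_conditions_for_sde} produces an $(\F^{X^n}_t)$-martingale $M^{n,\varphi}$ whose bracket equals $\int_0^t\langle\Gamma^n_s\varphi,\varphi\rangle ds$. Passing to the limit along the a.s.-convergent subsequence, with uniform integrability supplied by Remark~\ref{rem_properties_of_a_limit_points_of_tilde_x} and the uniform HS bound, and using $\|L_s\varphi\|^2=\langle\Gamma_s\varphi,\varphi\rangle$, delivers~\eqref{equ_martingale_problem_for_limit_of_mn} together with the stated bracket.

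The technically delicate point is the construction of $L$ in~(ii). The square-root map is not weakly continuous on $\HS$, and weak convergence of $\Gamma^n$ in $\LL{\HS}$ does not yield pointwise-in-$t$ weak convergence of $\Gamma^n_t$, so one cannot simply pass $(L^n_t)^\ast L^n_t\to L_t^\ast L_t$ in any naive sense. The Fatou-plus-integrated-trace argument sketched above is what sidesteps this issue; the remaining pieces are a relatively routine combination of a.s.\ passage to the limit in an integral form such as~\eqref{equ_equation_in_integral_form}, uniform moment control, and the standard criterion (e.g.\ Proposition~IX.1.17 in~\cite{Jacod:2003}) for preservation of the martingale property under a.s.\ convergence.
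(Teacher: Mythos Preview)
Your outline matches the paper's proof closely: tightness from Proposition~\ref{pro_tightness_of_tilde_x} plus compactness of the weak balls, Skorokhod representation, the decomposition of $\lambda^n\I_{\{X^n=0\}}$ combined with Lemma~\ref{lem_convergence_of_lambdan} for~(i), the measurable square root of Lemma~\ref{lem_square_root_operator} together with a Fatou-on-trace estimate for~(ii), and Proposition~IX.1.17 of~\cite{Jacod:2003} for preservation of the martingale property in~(iii).

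The one genuine gap is the construction of $M$ as a continuous square-integrable $\L_2$-\emph{valued} martingale. Working with a general $\varphi\in\Cf^2_{\alpha_0}[0,1]$ produces, for each such $\varphi$, a real-valued limit martingale $M^{\varphi}$, but not a single $\L_2$-valued process $M$ with $\langle M_t,\varphi\rangle=M^{\varphi}(t)$: identity~\eqref{equ_martingale_problem_for_limit_of_mn} cannot be solved for $M_t\in\L_2$ directly because the Laplacian term is only defined weakly. The paper resolves this by working first along the eigenbasis $\{\tilde{e}_k\}$ (using $\tilde{e}_k\in\Cf^3_{\alpha_0}[0,1]$ so that a Taylor-remainder argument gives $\langle X^n_s,\tilde{\Delta}^n\tilde{e}_k\rangle\to\langle X_s,\tilde{e}_k''\rangle$), passing to the limit jointly over all $k$ in $\bigl((\Cf[0,T])^3\bigr)^{\N}$, and then \emph{defining} $M_t:=\sum_{k\geq 1} M_k(t)\,\tilde{e}_k$. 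Convergence of this series in $\Cf([0,T],\L_2)$ comes from Burkholder--Davis--Gundy together with~\eqref{equ_boundedness_of_expectation_of_norm_of_l} and Lemma~\ref{lem_convergence_of_composition_in_hilbert_shmidt_space}, and the $(\F^{X,M}_t)$-martingale property (as opposed to a martingale for some smaller filtration) requires an additional increasing-filtration argument via Theorem~1.6 of~\cite{Liptser:2001}. These are real steps your sketch does not cover, and the basis-then-series route is essentially forced here.
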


\begin{remark} %measurability of a
  \label{rem_measurability_of_a}
  Due to equality~\eqref{equ_martingale_problem_for_limit_of_mn} and Theorem~1.2~\cite{Vakhania:1987}, the process $\int_{ 0 }^{ t } a_sds$, $t \in[0,T]$, is $(\F^{X,M})$-measurable.
\end{remark}

\begin{proof} %
  We first remark that the families $\left\{ \lambda^n \I_{\{X^n=0\}},\ n\geq 1 \right\}$, $\left\{ \I_{\{X^n>0\}},\ n\geq 1 \right\}$ and $\left\{ \Gamma^n,\ n\geq 1 \right\}$ are tight due to the compactness of the spaces where they are defined. Consequently, by Proposition~\ref{pro_tightness_of_tilde_x} and Proposition~3.2.4~\cite{Ethier:1986}, the family $\left\{ \left( \tilde{X}^n,\lambda^n \I_{\{X^n=0\}}, \I_{\{X^n>0\}},\Gamma^n \right),\ n\geq 1 \right\}$ is also tight. By the Prokhorov theorem, there exists a subsequence $N \subset \N$ such that
  \[
    \left( \tilde{X}^n,\lambda^n \I_{\{X^n=0\}}, \I_{\{X^n>0\}},\Gamma^n \right) \to (\tilde{X},a,\sigma,\Gamma)
  \]
  in distribution along $N$. Without loss of generality, let us assume that $N=\N$.

  Since 
  \begin{align*}
    \max\limits_{ t \in[0,T] }\big\|\tilde{X}^n_t-X^n_t\big\|^2&= \max\limits_{ t \in[0,T] }\max\limits_{ k \in [n] }\big|x_k^n(t)-x_{k-1}^n(t)\big|^2\\
    &\leq \max\limits_{ t \in[0,T] }\sup\limits_{ 0\leq \delta\leq \frac{1}{ n } }\max\limits_{ |u-u'|\leq \delta }\big|\tilde{X}^n_t(u)-\tilde{X}^n_t(u')\big|^2,
  \end{align*}
  it is easily to see that $\max\limits_{ t \in[0,T] }\big\|\tilde{X}^n_t-X_t^n\big\|^2 \stackrel{d}{\to} 0$, by Skorokhod representation Theorem~3.1.8~\cite{Ethier:1986} and the uniform convergence of $\tilde{X}^n$ to $\tilde{X}$. Hence, $\max\limits_{ t \in[0,T] }\big\|\tilde{X}^n_t-X_t^n\big\|^2 \to  0$ in probability as $n\to\infty$. Using Corollary~3.3.3~\cite{Ethier:1986} and the fact that $\tilde{X}^n$, $n\geq 1$, also converges to $\tilde{X}$ in $\Cf\left( [0,T], \L_2 \right)$ in distribution, we have that $X^n \stackrel{d}{\to} \tilde{X}=:X$ in $\Cf\left( [0,T],\L_2 \right)$.

  We next note that 
  \begin{equation} %equality for sequence of lambda
  \label{equ_equality_for_sequence_of_lambda}
    \lambda^n \I_{\{X^n_t=0\}}=\left( \lambda^n-\lambda \right)\I_{\{X^n_t=0\}}+\lambda\left( 1-\I_{\{X^n_t>0\}} \right), \quad t \in[0,T].
  \end{equation}
  By Lemma~\ref{lem_convergence_of_lambdan}, $\left( \lambda^n-\lambda \right)\I_{\{X^n=0\}} \to 0$ in $\rB\left(\L_2\right)$ a.s. as $n\to\infty$. Thus,~\eqref{equ_equality_for_sequence_of_lambda} yields $\lambda^n \I_{\{X^n=0\}} \stackrel{d}{\to}\lambda\left( 1-\sigma \right)$ in $\rB\left(\L_2\right)$, $n\to\infty$. This implies the equality $a=\lambda\left( 1-\sigma \right)$ a.s.

  The existence of a convergent subsequence of $\left\{\bX^n\right\}_{n\geq 1}$,  and statement (i) are proved.

  The statement (ii) directly follows from Lemma~\ref{lem_square_root_operator}.

  In order to prove statement (iii) of the proposition, we first define the following  $\L_2$-valued martingale
\begin{align*}
  M^n_t:&= \sum_{ k=1 }^{ n } \int_{ 0 }^{ t } \sqrt{ n }\I_{\left\{ x_k^n(s)>0 \right\}}dw_k^n(s) \pi_k^n\\
  &= \int_{ 0 }^{ t } \pr^n \mo{\I_{\left\{ X^n_s>0 \right\}}}QdW_s= \int_{ 0 }^{ t } \left( L_s^n \right)^*dW_s , \quad t \in[0,T].
\end{align*}
Set for $\varphi \in \L_2$
\begin{equation} %Delta tilde
  \label{equ_delta_tilde}
  \tilde{\Delta}^n \varphi:=n^3\sum_{ k=1 }^{ n } \Delta^n \varphi_k^n \pi_k^n,
\end{equation}
where $\varphi_k^n=\langle \varphi , \pi_k^n \rangle$, $\varphi_0^n=\alpha_0 \varphi_1^n$ and $\varphi_{n+1}^n=\alpha_0\varphi_n^n$. Since $X^n=\sum_{ k=1 }^{ n } x_k^n\pi_k^n$ and the family $\{ x_k^n,\ k \in [n] \}$ solves SDE~\eqref{equ_system_of_sde}-\eqref{equ_boundary_conditions_for_sde}, we get that for every $\varphi \in \L_2$
\begin{equation} %equality for Mn
  \label{equ_equality_for_mn}
  \begin{split}
    \left\langle M^n_t , \varphi \right\rangle&=\left\langle M^n_t , \pr^n \varphi \right\rangle\\
    &= \langle X^n_t , \pr^n\varphi \rangle-\langle g^n , \pr^n\varphi \rangle- \frac{1}{ 2 }\int_{ 0 }^{ t } \left\langle \tilde{\Delta}^nX_s^n , \pr^n\varphi \right\rangle ds\\
    &- \int_{ 0 }^{ t } \left\langle \lambda^n \I_{\{X^n_s=0\}} , \pr^n\varphi \right\rangle ds -\int_{ 0 }^{ t }\left\langle f(X_s^n) , \pr^n\varphi \right\rangle ds\\
  &=  \langle X^n_t , \varphi \rangle-\langle g^n , \varphi \rangle- \frac{1}{ 2 }\int_{ 0 }^{ t } \left\langle X_s^n , \tilde{\Delta}^n\varphi \right\rangle ds\\
  &- \int_{ 0 }^{ t } \left\langle \lambda^n \I_{\{X^n_s=0\}} , \varphi \right\rangle ds -\int_{ 0 }^{ t }\left\langle f(X_s^n) , \varphi \right\rangle ds, \quad t \in[0,T],
  \end{split}
\end{equation}
and the quadratic variation of the $(\F^{X^n})$-martingale $\left\langle M_t^n , \varphi \right\rangle$ equals
\[
  \left[ \left\langle M^n_{\cdot } , \varphi \right\rangle \right]_t=\int_{ 0 }^{ t } \left\|Q \I_{\{X^n_s>0\}}\pr^n\varphi\right\|^2ds, \quad t \in[0,T].
\]

Let $\tilde{e}_1(u)=1$, $u \in[0,1]$, and $\tilde{e}_k(u)=\sqrt{ 2 }\cos \pi(k-1)u$, $u \in[0,1]$, $k\geq 2$, if $\alpha_0=1$; and $\tilde{e}_k(u)=\sqrt{ 2 }\sin \pi ku$, $u \in[0,1]$, $k\geq 1$, if $\alpha_0=0$. Then $\tilde{e}_k \in \Cf^2_{\alpha_0}[0,1]$ for all $k \geq 1$, and $\{ \tilde{e}_k,\ k\geq 1 \}$ form an orthonormal basis in $\L_2$. Since $\left\|Q \I_{\{X^n_t>0\}}\pr^n \tilde{e}_k\right\|^2\leq \|Q\|^2$, $t \in [0,T]$, $k\geq 1$, the families $\{\left\langle M_{\cdot }^n , \tilde{e}_k \right\rangle, \ n\geq 1\}$ and $\left\{ \left[ \left\langle M^n_{\cdot } , \tilde{e}_k \right\rangle \right], n\geq 1 \right\}$ are tight in $\Cf[0,T]$ for every $k\geq 1$, by the Aldous tightness criterion. According to the tightness of $\left\{ \bX^n,\ n\geq 1 \right\}$, we also have that $\left\{ \left\langle X^n_{\cdot } , \tilde{e}_k \right\rangle,\ n\geq 1 \right\}$ is tight in $\Cf[0,T]$ for each $k\geq 1$. Using Proposition~2.4~\cite{Ethier:1986} and Prokhorov's theorem, we can choose a subsequence $N \subset \N$ such that
\begin{equation} %convergence barM barV
  \label{equ_convergence_barm_barv}
  \left(\left\langle X^n_{\cdot } , \tilde{e}_k \right\rangle,\left\langle M_{\cdot }^n , \tilde{e}_k \right\rangle,\left[ \left\langle M_{\cdot }^n , \tilde{e}_k \right\rangle \right]\right)_{k\geq 1} \to \left(\bar{X}_k,\bar{M}_k, \bar{V}_k\right)_{k\geq 1}
\end{equation}
in $\left((\Cf[0,T])^3\right)^{\N}$ in distribution along $N$. In particular, we have that $\left\langle M^n_{\cdot } , \tilde{e}_k \right\rangle^2-\left[ \left\langle M^n_{\cdot } , \tilde{e}_k \right\rangle \right]$, $n \geq 1$, is a sequence of martingales which converges to $\bar{M}_k^2-\bar{V}_k$ in $\Cf[0,T]$ in distribution along $N$ for all $k\geq 1$. 

We fix $m\geq 1$ and let $(\bar{\F}^{\bar{X},\bar{M}, \bar{V},m})$ be the complete right continuous filtration generated by $(\bar{X}_k,\bar{M}_k,\bar{V}_k)$, $k \in [m]$. By Proposition~IX.1.17~\cite{Jacod:2003}, we can conclude that $\bar{M}_k$ and $\bar{M}_k^2-\bar{V_k}$ are continuous local $(\bar{\F}^{\bar{X},\bar{M},\bar{V},m})$-martingales for all $k \in [m]$. Since
\[
  \E{ \left\langle M^n_T , e_k \right\rangle^2 }=\int_{ 0 }^{ T } \E{ \left\|Q \I_{\{X^n_s>0\}}\tilde{e}_k\right\|^2 }ds\leq \|Q\|^2T,
\]
we have that $\E{ \bar{M}_k^2(T) }< +\infty$, by Lemma~4.11~\cite{Kallenberg:2002}. Hence $\bar{M}_k^2$ is a continuous square-integrable $(\bar{\F}^{\bar{X},\bar{M},\bar{V},m})$-martingale with quadratic variation $\left[ \bar{M}_k \right]=\bar{V}$, $k \in [m]$. From Theorem~17.17~\cite{Kallenberg:2002}, we can conclude that $\F^{\bar{X},\bar{M},\bar{V},m}_t=\bar{\F}^{\bar{X},\bar{M},m}_t$, $t \in[0,T]$, where $(\bar{\F}^{\bar{X},\bar{M},m}_t)_{t \in[0,T]}$ is the complete right continuous filtration generated by $\left(\bar{X}_k,\bar{M}_k\right)$, $k \in [m]$. Since for every $t \in[0,T]$ the $\sigma$-algebra $\bar{\F}_t^{\bar{X},\bar{M},m}$ increases to $\bar{\F}_t^{\bar{X},\bar{M}}$ as $m\to\infty$, Theorem~1.6~\cite{Liptser:2001} yields that $\bar{M}_k$ is a continuous square-integrable $(\bar{\F}^{\bar{X},\bar{M}}_t)$-martingale with quadratic variation $[\bar{M}_k]=\bar{V}_k$ for each $k\geq 1$.

Next, we recall that 
\[
  \left( \tilde{X}^n,X^n,\lambda^n \I_{\{X^n=0\}},\Gamma^n \right) \to \left( X,X,a,\Gamma \right)
\]
in $\Cf\left( [0,T],\Cf[0,1] \right)\times \Cf\left( [0,T],\L_2 \right)\times \rB\left(\L_2\right)\times \rB\left(\HS\right)$ in distribution as $n\to\infty$. By Skorokhod representation Theorem~3.1.8~\cite{Ethier:1986}, we may assume that this sequence converges almost surely. Therefore, for every $t \in[0,T]$ and $k\geq 1$
\begin{align*}
  \left\langle X_t^n , \tilde{e}_k \right\rangle \to \left\langle X_t , \tilde{e}_k \right\rangle=:X_k(t), &\quad \left\langle g^n , \tilde{e}_k \right\rangle \to \left\langle g , \tilde{e}_k \right\rangle,\\
  \int_{ 0 }^{ t } \left\langle \lambda^n \I_{\{X^n_s=0\}} , \tilde{e}_k \right\rangle ds &\to \int_{ 0 }^{ t } \left\langle a_s , \tilde{e}_k \right\rangle ds,\\
  \int_{ 0 }^{ t } \left\langle f(X_s^n) , \tilde{e}_k \right\rangle ds & \to \int_{ 0 }^{ t } \left\langle f(X_s) , \tilde{e}_k \right\rangle ds,\\  
  \left[ \left\langle M^n_{\cdot } , \tilde{e}_k \right\rangle \right]_t=\int_{ 0 }^{ t }  \left\|L^n_s \tilde{e}_k\right\|^2 ds &\to \int_{ 0 }^{ t } \left\|L_s \tilde{e}_k\right\|^2ds =:V_k(t)
\end{align*}
a.s. as $n\to\infty$. Using Taylor's formula and the fact that $\tilde{e}_k \in \Cf^3_{\alpha_0}[0,1]$, $k \geq 1$, it is easy to see that for every $t \in[0,T]$ and $k\geq 1$
\[
  \int_{ 0 }^{ t } \left\langle X_s^n , \tilde{\Delta}^n \tilde{e}_k \right\rangle ds \to \int_{ 0 }^{ t } \left\langle X_s , \tilde{e}_k'' \right\rangle ds \quad \mbox{a.s. \ as}\ \ n\to\infty.  
\]
Consequently, for every $t \in[0,T]$ the sequence $\left\langle M^n_t , \tilde{e}_k \right\rangle$, $n\geq 1$, converges to 
\begin{align*}
  M_k(t):&= \left\langle X_t , \tilde{e}_k \right\rangle-\left\langle g , \tilde{e}_k \right\rangle- \frac{1}{ 2 }\int_{ 0 }^{ t } \left\langle X_s , \tilde{e}_k'' \right\rangle ds\\
  &=\int_{ 0 }^{ t } \langle a_s , \tilde{e}_k \rangle ds-\int_{ 0 }^{ t } \left\langle f(X_s) , \tilde{e}_k \right\rangle ds  
\end{align*}
a.s. as $n\to\infty$. Thus, for every $m\in \N$ and $t_i \in [0,T]$, $i \in [m]$,
\[
  \left( \left(\left\langle X^n_{t_i} , \tilde{e}_k \right\rangle,\left\langle M^n_{t_i} , \tilde{e}_k \right\rangle,\left[ \left\langle M^n_{\cdot } , \tilde{e}_k \right\rangle \right]_{t_i}\right)_{i \in [m]} \right)_{k\geq 1} \to \left( \left(X_k(t_i),M_k(t_i),V_k(t_i)\right)_{i \in [m]} \right)_{k\geq 1}
\]
in $\left(\R^{3m}\right)^{\N}$ a.s. as $n\to\infty$. This and convergence~\eqref{equ_convergence_barm_barv} imply that 
\[
  \law\left\{ \left(X_k, M_k,V_k \right)_{k\geq 1} \right\}=\law\left\{ \left(\bar{X}_k,\bar{M}_k,\bar{V}_k\right)_{k\geq 1} \right\}
\]
in $\left( \left( \Cf[0,1] \right)^3 \right)^{\N}$. Consequently, for every $k\geq 1$ the process $M_k$ is a continuous square-integrable $(\bar{\F}^{X,M})$-martingale with quadratic variation 
\[
  \left[ M_k \right]_t=V_k(t)=\int_{ 0 }^{ t } \left\|L_s \tilde{e}_k\right\|^2ds, \quad t \in[0,T],
\]
where $(\bar{\F}_t^{X,M})_{t \in[0,T]}$ is the complete right continuous filtration generated by $X_k,M_k$, $k\geq 1$.

Now we introduce the following process in $\L_2$
\begin{equation} %definition of M in L2
  \label{equ_definition_of_m_in_l2}
  M_t:=\sum_{ k=1 }^{ \infty } M_k(t)\tilde{e}_k, \quad t \in[0,T].
\end{equation}
Remark that $M_t$, $t \in[0,T]$, is a well-defined continuous process in $\L_2$. Indeed, by the Burkholder-Davis-Gundy inequality, Lemma~\ref{lem_convergence_of_composition_in_hilbert_shmidt_space},~\eqref{equ_boundedness_of_expectation_of_norm_of_l} and the dominated convergence theorem, for every $n,m\geq 1$
\begin{align*}
  \e\Bigg[\max\limits_{ t \in[0,T] }&\bigg\|\sum_{ k=1 }^{ n } M_k(t)\tilde{e}_k-\sum_{ k=1 }^{ n+m } M_k(t)\tilde{e}_k\bigg\|^2 \Bigg]=\E{ \max\limits_{ t \in[0,T] }\sum_{ k=n+1 }^{ n+m } M_k^2(t) }\\
  &\leq \int_{ 0 }^{ T } \E{ \sum_{ k,l=n+1 }^{n+m}\left\langle L_t \tilde{e}_k , L_t\tilde{e}_l  \right\rangle}dt\\
  &=  \int_{ 0 }^{ T }  \E{\sum_{ k,l=1 }^{ \infty } \left\langle L_t\tpr^{n,n+m}\tilde{e}_k , L_t\tpr^{n,n+m}\tilde{e}_l \right\rangle} dt\\
  &=  \int_{ 0 }^{ T } \E{ \left\|L_t\tpr^{n,n+m}\right\|_{\HS}^2 }dt \to 0
\end{align*}
as $n,m\to\infty$, where $\tpr^{n,n+m}$ is the orthogonal projection in $\L_2$ onto $\spann\{ \tilde{e}_k,\ k =n+1,\dots,n+m \}$.
This implies the convergence of series~\eqref{equ_definition_of_m_in_l2} and the continuity of $M_t$, $t \in[0,T]$, in $\L_2$.

Since $\bar{\F}^{X,M}_t=\F^{X,M}_t$, $t \in[0,T]$, and $\left\langle M_t , \tilde{e}_k \right\rangle=M_k(t)$, $t \in[0,T]$, for all $k\geq 1$, it is easily seen that $M$ is a continuous square-integrable $(\F_t^{X,M})$-martingale in $\L_2$ with quadratic variation 
\[
  \left[ M \right]_t=\int_{ 0 }^{ t } L_s^2 ds=\int_{ 0 }^{ t } \Gamma_s ds , \quad t \in[0,T]. 
\]
This implies statement (iii). The proposition is proved.
\end{proof}

\subsection{A property of quadratic variation of heat semi martingales}%
\label{sub:a_property_of_quadratic_variation_of_some_semi_martingales}

%we need to assume that M are local martingales
In this section, we will assume that $(\Omega,\F,(\F_t)_{t\geq 0},\p)$ is a filtered complete probability space, where the filtration $(\F_t)_{t\geq 0}$ is complete and right continuous. Let $T>0$ be fixed. Consider a continuous $(\F_t)$-adapted $\L_2$-valued process $Z_t$, $t \in[0,T]$, such that there exist random elements $a$ and $L$ in $\LL{ \L_2}$ and $\LL{ \HS}$, respectively, such that for every $\varphi \in \Cf^2_{\nd}[0,1]$ the processes $\int_{ 0 }^{ t } \langle a_s , \varphi \rangle ds $, $t \in[0,T]$, and $\int_{ 0 }^{ t }  \|L_s \varphi\|^2 ds  $, $t \in[0,T]$, are $(\F_t)$-adapted, and
\begin{equation} %heat semimartingale
  \label{equ_heat_semimartingale}
  \M^{\varphi}_Z(t):=\langle Z_t , \varphi \rangle-\langle Z_0 , \varphi \rangle- \frac{1}{ 2 }\int_{ 0 }^{ t } \langle Z_s , \varphi'' \rangle ds+\int_{ 0 }^{ t } \langle a_s , \varphi \rangle ds, \quad t \in[0,T],
\end{equation}
is a local $(\F_t)$-martingale with quadratic variation 
\[
  [\M^{\varphi}_Z]_t=\int_{ 0 }^{ t } \|L_s \varphi\|^2 ds, \quad t \in[0,T].
\]
Note that the assumptions on $L$ implies that the continuous process $\int_{ 0 }^{ t } \left\|L_s\right\|_{\HS}^2ds $, $t \in[0,T]$, is well-defined and $(\F_t)$-adapted.

We will further consider the case of the Neumann boundary condition, where $\alpha_0=1$. All conclusions of this section will be the same for the Dirichlet boundary condition, where $\alpha_0=0$. Let $\left\{ \tilde{e}_k,\ k\geq 1 \right\}$ be the family of the eigenfunctions of $\Delta$ on $[0,1]$ with Neumann boundary conditions. We recall that $\tilde{e}_1(u)=1$, $u \in[0,1]$, and $\tilde{e}_k(u)=\sqrt{ 2 }\cos \pi (k-1) u$, $u \in[0,1]$, $k \geq 2$.  Denote the orthogonal projection in $\L_2$ onto $\spann\{ \tilde{e}_k,\ k \in [n] \}$ by $\tpr^n$.

Let $Z_t^n=\tpr^nZ_t$, $t\geq 0$, and $a_t^n=\tpr^na_t$, $t \in[0,T]$. We also introduce 
\[
  \dot{Z}^n_t=\sum_{ k=1 }^{ n } \langle Z_t , \tilde{e}_k \rangle \tilde{e}'_k, \quad t \in[0,T],\ \ n\geq 1,
\]
and note that $\dot{Z}^n$, $n\geq 1$, is a sequence of random elements in $\LL{ \L_2}$.

\begin{lemma} %derivative of Z
  \label{lem_derivative_of_z}
  \begin{enumerate}
    \item [(i)] The equality
      \[
	\P{ \dot{Z}^n,\ n\geq 1,\ \mbox{converges in } \LL{ \L_2}\ \mbox{and a.e. as}\ n\to\infty }=1
      \]
      holds.

    \item [(ii)] Set 
      \[
	\dot{Z}:=\lim_{ n\to\infty }\dot{Z}^n,
      \]
      where the limit is taken a.e. Then $\dot{Z}$ is a random element in $\LL{ \L_2}$ and for every $t \in[0,T]$
      \[
	\int_{ 0 }^{ t } \|\dot{Z}^n_s\|^2ds \to \int_{ 0 }^{ t } \|\dot{Z}_s\|^2ds \quad \mbox{a.s. \ as}\ \ n\to\infty.  
      \]

  \end{enumerate}
\end{lemma}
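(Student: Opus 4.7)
The plan is to derive an $H^1$-type energy estimate forcing $\sum_{k}\mu_k Z_k(t)^2<\infty$ for Lebesgue-a.e. $t\in[0,T]$, almost surely, where $Z_k(t):=\langle Z_t,\tilde e_k\rangle$ and $\mu_k:=\pi^2(k-1)^2$ (Neumann case; Dirichlet is analogous). Since $\tilde e_k''=-\mu_k\tilde e_k$ and $\tilde e_k\in\Cf^2_{\nd}[0,1]$, substituting $\varphi=\tilde e_k$ into~\eqref{equ_heat_semimartingale} yields the real semimartingale decomposition
\begin{equation*}
Z_k(t)=Z_k(0)-\frac{\mu_k}{2}\int_0^t Z_k(s)\,ds-\int_0^t\langle a_s,\tilde e_k\rangle\,ds+M_k(t),
\end{equation*}
where $M_k:=\M^{\tilde e_k}_Z$ is a continuous local martingale with $[M_k]_t=\int_0^t\|L_s\tilde e_k\|^2\,ds$. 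A direct computation shows that $\{\tilde e_k'\}_{k\geq 2}$ is orthogonal in $\L_2$ with $\|\tilde e_k'\|^2=\mu_k$, so $\|\dot Z^n_t\|^2=\sum_{k=2}^n\mu_k Z_k(t)^2$. Hence both conclusions reduce to proving the a.s. convergence of $\sum_{k\geq 1}\mu_k\int_0^T Z_k(s)^2\,ds$.

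To obtain this bound I would apply It\^o's formula to $Z_k^2$ and sum over $k\in[n]$, getting
\begin{equation*}
\sum_{k=1}^n\mu_k\int_0^t Z_k^2\,ds=\|\tpr^n Z_0\|^2-\|\tpr^n Z_t\|^2-2\int_0^t\langle\tpr^n Z_s,\tpr^n a_s\rangle\,ds+2\widetilde M^n_t+\int_0^t\|L_s\tpr^n\|_{\HS}^2\,ds,
\end{equation*}
where $\widetilde M^n_t:=\sum_{k=1}^n\int_0^t Z_k\,dM_k$ is a continuous local martingale and $\|L_s\tpr^n\|_{\HS}\leq\|L_s\|_{\HS}$. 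Introducing the stopping times $\tau_R:=\inf\bigl\{t:\|Z_t\|^2+\int_0^t(\|a_s\|^2+\|L_s\|_{\HS}^2)\,ds\geq R\bigr\}\wedge T$, evaluating at $t\wedge\tau_R$ and taking expectations (so that $\E\widetilde M^n_{t\wedge\tau_R}=0$), and using $2|\langle Z,a\rangle|\leq\|Z\|^2+\|a\|^2$, I obtain an upper bound on $\E\sum_{k=1}^n\mu_k\int_0^{t\wedge\tau_R}Z_k^2\,ds$ that is independent of $n$. Monotone convergence in $n$ followed by $R\to\infty$ (noting that $\tau_R\uparrow T$ a.s. by the continuity of $Z$ in $\L_2$ and the Bochner integrability of $a$ and $L$) yields $\sum_{k\geq 1}\mu_k\int_0^T Z_k^2\,ds<\infty$ a.s.

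With this bound in hand, orthogonality of $\{\tilde e_k'\}$ gives $\|\dot Z^n-\dot Z^m\|_{\L_2,T}^2=\sum_{k=m+1}^n\mu_k\int_0^T Z_k^2\,ds\to 0$ as $n,m\to\infty$, so $\{\dot Z^n\}$ is Cauchy in $\LL{\L_2}$ almost surely; simultaneously Fubini gives $\sum_k\mu_k Z_k(t)^2<\infty$ for a.e. $t\in[0,T]$ a.s., which is exactly the $\L_2$-convergence of $\dot Z^n_t=\sum_{k=1}^n Z_k(t)\tilde e_k'$. The two limits coincide and define the random element $\dot Z\in\LL{\L_2}$, proving (i). For (ii), the sequence $n\mapsto\|\dot Z^n_t\|^2=\sum_{k=2}^n\mu_k Z_k(t)^2$ is nondecreasing with a.s. a.e. limit $\|\dot Z_t\|^2$, so the monotone convergence theorem delivers $\int_0^t\|\dot Z^n_s\|^2\,ds\to\int_0^t\|\dot Z_s\|^2\,ds$. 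The chief technical difficulty is the uniform-in-$n$ control of the local-martingale term $\widetilde M^n$; I avoid $k$- or $n$-dependent stopping times by using a single localisation $\tau_R$ expressed only through the intrinsic data $Z,a,L$, so that the monotone-convergence step in $n$ is not contaminated by varying stopping times.
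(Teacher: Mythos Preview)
Your proof is correct and rests on the same It\^o identity for $\|Z^n_t\|^2=\sum_{k\le n}z_k(t)^2$ that the paper uses (equation~\eqref{equ_ito_formula_for_zn_square}). The difference is only in how the conclusion is extracted from that identity. The paper works pathwise: it shows that every term on the right-hand side except $\int_0^t\|\dot Z^n_s\|^2\,ds$ converges (a.s.\ or in probability) as $n\to\infty$, the delicate one being the martingale $\M^n$, for which it runs a Cauchy-in-$L^2$ argument under a localisation by $\int_0^\cdot\|L_s\|_{\HS}^2\,ds$; monotonicity then upgrades the resulting convergence in probability to a.s. You instead localise by a richer stopping time $\tau_R$ (controlling $\|Z\|$, $a$, and $L$ simultaneously), take expectations so that the martingale contribution vanishes outright, and obtain an $n$-uniform bound on $\E\sum_{k}\mu_k\int_0^{t\wedge\tau_R}Z_k^2\,ds$; monotone convergence in $n$ and then $\tau_R\uparrow T$ finish. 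Your route is marginally shorter on the martingale side at the cost of needing $\int_0^t\|a_s\|^2\,ds$ to be $(\F_t)$-adapted (so that $\tau_R$ is a genuine stopping time); this follows from the standing assumption that $\int_0^t\langle a_s,\varphi\rangle\,ds$ is adapted for each $\varphi\in\Cf^2_{\nd}[0,1]$ via Lebesgue differentiation, but it is worth stating explicitly. Either way the substance is the same energy estimate.
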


\begin{proof} %
  Set $z_k(t):=\langle Z_t , \tilde{e}_k \rangle$, $t \in[0,T]$, $k\geq 1$. Then, by the definition of $Z$, for every $k\geq 1$ the process 
  \[
    \xi_k(t):=z_k(t)-z_k(0)+ \frac{\pi^2 (k-1)^2}{ 2 }\int_{ 0 }^{ t } z_k(s)ds+\int_{ 0 }^{ t } a_k(s)ds, \quad t \in[0,T],
  \]
  is a continuous local $(\F_t)$-martingale with quadratic variation 
  \[
    [\xi_k]_t=\int_{ 0 }^{ t } \|L_s \tilde{e}_k\|^2ds, \quad t \in[0,T], 
  \]
  where $a_k(s):=\langle a_s , \tilde{e}_k \rangle$. Denote $\sigma_{k,l}^2(t):= \langle L_t \tilde{e}_k , L_t \tilde{e}_l \rangle$, $t \in[0,T]$, and note that
  \[
    Z_t^n=\sum_{ k=1 }^{ n } z_k(t)\tilde{e}_k \quad \mbox{and} \quad a_t^n=\sum_{ k=1 }^{ n } a_k(t)\tilde{e}_k,\quad t \in[0,T], \ \ n\geq 1.
  \]
  By the \Ito formula and the polarisation equality, we get
  \begin{equation} %Ito formula for Zn square
  \label{equ_ito_formula_for_zn_square}
    \begin{split}
      \|Z_t^n\|^2&=  \|Z_0^n\|^2- \int_{ 0 }^{ t } \|\dot{Z}_s^n\|^2ds + 2\int_{ 0 }^{ t } \left\langle a^n_s , Z_s^n \right\rangle ds\\
      &+ \int_{ 0 }^{ t } \|L_s\tpr^n\|^2_{\HS}ds + \M^n(t), \quad t \in[0,T],
    \end{split}
  \end{equation}
  where $\M^n(t)$, $t \in[0,T]$, is a continuous local $(\F_t)$-martingale defined as 
  \[
    \M^n(t)=2\sum_{ k=1 }^{ n } \int_{ 0 }^{ t }z_k(s) d\xi_k(s), \quad t \in[0,T].  
  \]
  A simple computation gives that
  \[
    [\M^n]_t=4 \int_{ 0 }^{ t } \|L_s Z^n_s\|^2 ds, \quad t \in[0,T]. 
  \]
  Trivially, $\|Z_t^n\|^2 \to \|Z_t\|^2$ a.s. as $n\to\infty$ for all $t \in[0,T]$. Using the dominated convergence theorem, we can conclude that $\int_{ 0 }^{ t } \left\langle a^n_s , Z^n_s \right\rangle ds \to \int_{ 0 }^{ t } \left\langle a_s , Z_s \right\rangle ds $ a.s. as $n\to\infty$. Next, by Lemma~\ref{lem_convergence_of_composition_in_hilbert_shmidt_space} and the dominated convergence theorem, $\int_{ 0 }^{ t } \|L_s\tpr^n\|^2_{\HS}ds \to \int_{ 0 }^{ t } \left\|L_s\right\|_{\HS}^2ds $ a.s. as $n\to\infty$. Next, we will show that $\M^n(t)$ converges in probability. Since $\M^n$ is a local martingale, we need to choose a localization sequence of $(\F_t)$-stopping times defined as follows
  \[
    \tau_k:=\inf\left\{ t \in[0,T]:\ \int_{ 0 }^{ t } \left\|L_s\right\|_{\HS}^2ds \geq k  \right\}\wedge T.
  \]
  Then the processes $\M^n(t \wedge\tau_k)$, $t \in[0,T]$, $n\geq 1$, are square-integrable $(\F_t)$-martingales for every $k\geq 1$, and $\tau_k \uparrow T$ as $k\to\infty$. By the Burkholder-Davis-Gundy inequality (see e.g.~\cite[Theorem~III.3.1]{Ikeda:1989}), for every $k,n,m\geq 1$, $n<m$,
  \[
    \e\Bigg[\max\limits_{ t \in [0,T] }\big|\M^n(t\wedge \tau_k)-\M^{m}(t\wedge\tau_k)\big|^2\Bigg]\leq  16 \E{ \int_{ 0 }^{ \tau_k } \left\|L_s\tpr^{n,m}Z_s^m\right\|^2ds  },
  \]
  where $\tpr^{n,m}$ is the orthogonal projection in $\L_2$ onto $\spann\left\{ \te_k,\ k=n+1,\dots,m \right\}$.  Hence, by the dominated convergence theorem,
  \[
    \E{ \max\limits_{ t \in [0,T] }\left|\M^n(t\wedge \tau_k)-\M^{m}(t\wedge\tau_k)\right|^2 }\to 0 \quad \mbox{as}\ \ n\to\infty.
  \]
  This implies that there exists a continuous square-integrable $(\F_t)$-martingale $\M_k(t)$, $t \in[0,T]$, such that 
  \[
    \max\limits_{ t \in[0,T] }\big|\M^n(t \wedge \tau_k)-\M_k(t)\big| \to 0 \quad \mbox{in probability as}\ \ n\to\infty.
  \]
  By Lemma~B.11~\cite{Cherny:2005},
  \[
    \left[ \M_k \right]_t=4\int_{ 0 }^{ t\wedge\tau_k } \left\|L_sZ_s\right\|^2 ds, \quad t \in[0,T]. 
  \]
  Furthermore, for every $k\geq 1$ $\M_k=\M_{k+1}(\cdot \wedge \tau_k)$ a.s. We define $\M(t):=\M_k(t)$ for $t\leq \tau_k$, $k\geq 1$. Trivially, $\M$ is a continuous local $(\F_t)$-martingale with quadratic variation
  \[
    \left[ \M \right]_t=4\int_{ 0 }^{ t } \left\|L_sZ_s\right\|^2 ds, \quad t \in[0,T]. 
  \]
  Using Lemma~4.2~\cite{Kallenberg:2002}, $\M^n(t) \to \M(t)$ in probability as $n\to\infty$ for every $t \in[0,T]$. 

  We have obtained that every term, except $\int_{ 0 }^{ t } \|\dot{Z}_s^n\|^2 ds$, of equality~\eqref{equ_ito_formula_for_zn_square} converges in probability. Hence, $\int_{ 0 }^{ t } \|\dot{Z}^n_s\|^2 ds$ also converges in probability. Moreover, this sequence is monotone. By Lemma~4.2~\cite{Kallenberg:2002}, it converges almost surely. By Fatou's lemma,
  \begin{equation} %L2 boundedness of dotZ
  \label{equ_l2_boundedness_of_dotz}
  \int_{ 0 }^{ T } \lim_{ n\to\infty }\|\dot{Z}^n_s\|^2 ds <\infty \quad \mbox{a.s.} 
  \end{equation}
  This implies the convergence of $\left\{\dot{Z}^n_s(\omega)\right\}_{n\geq 1}$ in $\L_2$ for almost all $s$ and $\omega$. Hence $\dot{Z}^n$, $n\geq 1$, converges to $\dot{Z}$ a.e. a.s. as $n\to\infty$. The equality in the second part of the lemma follows from the monotone convergence theorem and~\eqref{equ_l2_boundedness_of_dotz}. In particular, $\|\dot{Z}^n\|_{\L_2,T} \to \|\dot{Z}\|_{\L_2,T}$. Thus, $\dot{Z}^n \to \dot{Z}$ in $\LL{ \L_2}$ a.s., according to Proposition~2.12~\cite{Kallenberg:2002}.
\end{proof}

\begin{proposition} %
  \label{pro_ito_formula}
  Let $F \in \Cf^2\left( \R  \right)$ has a bounded second derivative and $h \in \Cf^1[0,T]$. Then
  \begin{equation} %Ito formula for FZn
  \label{equ_ito_formula_for_fzn}
    \begin{split}
      \langle F(Z_t) , h \rangle &=  \langle F(Z_0) , h \rangle- \frac{1}{ 2 }\int_{ 0 }^{ t } \left\langle \left(F'(Z_s)h\right)' , \dot{Z}_s \right\rangle ds +     \int_{ 0 }^{ t } \left\langle F'(Z_s)h , a_s \right\rangle ds \\
      &+ \frac{1}{ 2 }\int_{ 0 }^{ t } \left\langle L_s \mo{F''(Z_s)h} , L_s \right\rangle_{\HS} ds +\M_{F,h}(t), \quad t \in[0,T],
    \end{split}
  \end{equation}
  where $\M_{F,h}(t)$, $t \in [0,T]$, is a continuous local $(\F_t)$-martingale with quadratic variation 
  \[
    \left[ \M_{F,h} \right]_t= \int_{ 0 }^{ t } \left\|L_sF'(Z_s)h\right\|^2 ds, \quad t \in[0,T], 
  \]
  and $\left(F'(Z_s)h\right)':=F''(Z_s)\dot{Z}_sh+F'(Z_s)h' \in \L_2$.
\end{proposition}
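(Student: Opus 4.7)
The plan is to derive the formula first for the Galerkin projection $Z^n := \tpr^n Z$, where it reduces to classical It\^o calculus, and then pass to the limit $n\to\infty$ using Lemma~\ref{lem_derivative_of_z}. By the definition of a heat semimartingale, the Fourier coefficients $z_k(t) = \langle Z_t, \te_k\rangle$ are real semimartingales with $dz_k(t) = -\tfrac{1}{2}\pi^2(k-1)^2 z_k(t)\,dt - a_k(t)\,dt + d\xi_k(t)$ and, by polarisation of $[\xi_k]_t = \int_0^t \|L_s \te_k\|^2 ds$, with joint brackets $d[\xi_k,\xi_l]_t = \langle L_t \te_k, L_t \te_l\rangle dt$. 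I apply the finite-dimensional It\^o formula to the smooth functional $(z_1,\dots,z_n) \mapsto \int_0^1 F\bigl(\sum_{k\leq n} z_k \te_k(u)\bigr) h(u)\,du$ and use the identity $\sum_{k,l\leq n} \langle \mo{F''(Z^n_s)h}\te_k, \te_l\rangle \langle L_s \te_k, L_s \te_l\rangle = \langle L_s \tpr^n \mo{F''(Z^n_s)h}\tpr^n, L_s\rangle_{\HS}$ (immediate from the definition of the Hilbert--Schmidt inner product) to recognise the second-order term; this produces an It\^o formula at the projected level with a $\tfrac{1}{2}\langle F'(Z^n_s)h,\Delta Z^n_s\rangle$ drift, an $a^n_s$ drift, the expected second-order term, and martingale part $\M^n_{F,h}$ of bracket $\int_0^t \|L_s \tpr^n (F'(Z^n_s)h)\|^2 ds$. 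Since $\te_k'(0)=\te_k'(1)=0$ in the Neumann case (the Dirichlet case is analogous), integration by parts produces no boundary term and rewrites $\tfrac{1}{2}\langle F'(Z^n_s)h, \Delta Z^n_s\rangle$ as $-\tfrac{1}{2}\langle (F'(Z^n_s)h)', \dot Z^n_s\rangle$, bringing the projected identity into the form of~\eqref{equ_ito_formula_for_fzn}.

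I then pass to the limit $n\to\infty$. By Lemma~\ref{lem_derivative_of_z}(ii), $\dot Z^n \to \dot Z$ in $\LL{\L_2}$ and a.e.\ a.s., and along a subsequence $Z^n_s \to Z_s$ a.e.\ on $[0,1]$ for a.e.\ $s \in [0,T]$. Since $F''$ is bounded and $F'$ has linear growth, dominated convergence yields $F'(Z^n_s)h \to F'(Z_s)h$ in $\L_2$ and $(F'(Z^n_s)h)' = F''(Z^n_s)\dot Z^n_s h + F'(Z^n_s)h' \to (F'(Z_s)h)'$ in $\L_2$ for a.e.\ $s$; combined with the strong convergence $\tpr^n a \to a$ in $\LL{\L_2}$ and $Z^n \to Z$ in $\Cf([0,T],\L_2)$, all four deterministic terms of the projected identity converge to their counterparts in~\eqref{equ_ito_formula_for_fzn}. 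Convergence of the martingale part to a continuous local martingale with bracket $\int_0^t \|L_s F'(Z_s)h\|^2 ds$ is obtained by localising at $\tau_k := \inf\{t \in [0,T]: \int_0^t \|L_s\|_{\HS}^2 ds \geq k\} \wedge T$, applying the Burkholder--Davis--Gundy inequality to $\M^n_{F,h}(\cdot \wedge \tau_k)$, and letting $k\to\infty$, exactly as in the proof of Lemma~\ref{lem_derivative_of_z}.

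The main technical obstacle will be the passage to the limit in the two quadratic-type integrands, that is, showing $\langle L_s \tpr^n \mo{F''(Z^n_s)h}\tpr^n, L_s\rangle_{\HS} \to \langle L_s \mo{F''(Z_s)h}, L_s\rangle_{\HS}$ and $\|L_s \tpr^n (F'(Z^n_s)h)\| \to \|L_s F'(Z_s)h\|$ pointwise in $s$, and then dominatedly in $(s,\omega)$. The delicate point is that the multiplication operators $\mo{F''(Z^n_s)h}$ converge to $\mo{F''(Z_s)h}$ only in the strong operator topology and not in operator norm, so the naive estimate $\|AC - BC\|_{\HS} \leq \|A - B\| \cdot \|C\|_{\HS}$ does not close. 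The resolution is to exploit that $L_s$ is itself Hilbert--Schmidt, that the multiplication operators are uniformly bounded by $\|F''\|_\infty \|h\|_\infty$, and that the projections $\tpr^n$ converge strongly to the identity; Lemma~\ref{lem_convergence_of_composition_in_hilbert_shmidt_space} then yields convergence in the $\HS$-norm, while the uniform bound $\|L_s \tpr^n \mo{F''(Z^n_s)h}\tpr^n\|_{\HS} \leq \|F''\|_\infty \|h\|_\infty \|L_s\|_{\HS}$ supplies the dominating integrand for the subsequent time integration.
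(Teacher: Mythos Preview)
Your proposal is correct and follows essentially the same route as the paper: Galerkin approximation via the finite-dimensional It\^o formula applied to $(z_1,\dots,z_n)\mapsto \int_0^1 F\bigl(\sum_{k\le n} z_k\te_k\bigr)h$, integration by parts to convert the Laplacian drift into the $\langle(F'(Z^n_s)h)',\dot Z^n_s\rangle$ form, and passage to the limit using Lemma~\ref{lem_derivative_of_z} for the gradient term, Lemma~\ref{lem_convergence_of_composition_in_hilbert_shmidt_space} for the second-order $\HS$-term, and the same localisation sequence $\tau_k$ for the martingale part. The only cosmetic difference is that at level $n$ the paper writes the gradient term with the truncated object $(F'(Z^n_s)h)'_n=\sum_{k\le n}\langle F'(Z^n_s)h,\te_k\rangle\te_k'$, whereas you use the honest derivative $(F'(Z^n_s)h)'=F''(Z^n_s)\dot Z^n_s h+F'(Z^n_s)h'$; since both, when paired with $\dot Z^n_s$, equal $-\langle F'(Z^n_s)h,(Z^n_s)''\rangle$ (boundary terms vanish because $\te_k'(0)=\te_k'(1)=0$), the two expressions coincide and the limits are handled identically.
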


\begin{proof} %
  As in the proof of Lemma~\ref{lem_derivative_of_z}, we can compute for every $n\geq 1$ 
  \begin{align*}
    \left\langle F\left( Z_t^n \right) , h \right\rangle&= \langle F(Z^n_0) , h \rangle-\sum_{ k=1 }^{ n } \frac{ \pi^2 (k-1)^2 }{ 2 }\int_{ 0 }^{ t } \left\langle F'(Z_s^n)h , \tilde{e}_k \right\rangle z_k(s)ds\\
    &+ \sum_{ k=1 }^{ n } \int_{ 0 }^{ t } \left\langle F'(Z_s^n)h , \tilde{e}_k \right\rangle a_k(s)ds + \frac{1}{ 2 }\sum_{ k,l=1 }^{ n } \int_{ 0 }^{ t } \left\langle F''(Z_s^n)h \tilde{e}_k, \tilde{e}_l \right\rangle \sigma_{k,l}^2(s)ds\\
    &+ \sum_{ k=1 }^{ n } \int_{ 0 }^{ t } \left\langle F'(Z_s^n)h , \tilde{e}_k \right\rangle d\xi_k(s), \quad t \in[0,T]. 
  \end{align*}
  Consequently,
  \begin{align*}
    \langle F(Z_t^n) , h \rangle &=  \langle F(Z_0^n) , h \rangle- \frac{1}{ 2 }\int_{ 0 }^{ t } \left\langle \left(F'(Z_s^n)h\right)'_n , \dot{Z}_s^n \right\rangle ds +     \int_{ 0 }^{ t } \left\langle F'(Z_s^n)h , a_s^n \right\rangle ds \\
    &+ \int_{ 0 }^{ t } \left\langle L_s\tpr^n\mo{F''(Z_s^n)h} , L_s\tpr^n \right\rangle_{\HS} ds +\M^n_{F,h}(t), \quad t \in[0,T],
  \end{align*}
  where  
  \[
    \M^n_{F,h}(t)= \sum_{ k=1 }^{ n } \int_{ 0 }^{ t } \left\langle F'(Z_s^n)h , \tilde{e}_k \right\rangle d \xi_k(s), 
  \]
  and $\left( F'(Z_s^n)h \right)'_n=\sum_{ k=1 }^{ n }\left\langle F'(Z_s^n)h , \tilde{e}_k \right\rangle \tilde{e}_k' $.
  The process $\M_{F,h}^n(t)$, $t \in[0,T]$, is a continuous local $(\F_t)$-martingale with quadratic variation 
  \begin{align*}
    \left[ \M_{F,h}^n(t) \right]_t&= \sum_{ k,l=1 }^{ n } \int_{ 0 }^{ t } \left\langle F'(Z_s^n)h , \tilde{e}_k \right\rangle \left\langle F'(Z_s^n)h , \tilde{e}_l \right\rangle \sigma_{k,l}^2 ds \\
    &= \int_{ 0 }^{ t } \left\|L_s\tpr^nF'(Z_s^n)h\right\|^2 ds, \quad t \in[0,T]. 
  \end{align*}
  By the boundedness of the second derivative of $F$ we have that there exists a constant $C>0$ such that $|F'(x)|\leq C(1+|x|)$ and $|F(x)|\leq C(1+|x|^2)$. Therefore, $\langle F(Z_t^n) , h \rangle \to \langle F(Z_t) , h \rangle$ a.s., $n\to\infty$, and $F'(Z_t^n)h \to F'(Z_t)h$ and $F''(Z_t^n)h \to F''(Z_t)h$ in $\L_2$ a.s. as $n\to\infty$ for all $t \in[0,T]$. By the dominated convergence theorem and Lemma~\ref{lem_convergence_of_composition_in_hilbert_shmidt_space},
  \[
    \int_{ 0 }^{ t } \left\langle F'(Z_s^n)h , a_s^n \right\rangle ds \to \int_{ 0 }^{ t } \left\langle F'(Z_s)h , a_s \right\rangle \quad \mbox{a.s.}
  \]
  and
  \[
    \int_{ 0 }^{ t } \left\langle L_s\tpr^n\mo{F''(Z_s^n)h},L_s\tpr^n \right\rangle_{\HS}ds \to  \int_{ 0 }^{ t	} \left\langle L_s\mo{F''(Z_s)h}, L_s \right\rangle_{\HS} ds \quad \mbox{a.s.}
  \]
  as $n\to\infty$. Using the localization sequence, one can show that for every $t \in[0,T]$ $\M^n_{F,h}(t) \to \M_{F,h}(t)$ in probability as in the proof of the previous lemma.
  
  In order to finish the proof of the proposition, we only need to show the convergence $\int_{ 0 }^{ t } \left\langle \left( F'(Z_s^n)h \right)'_n , \dot{Z}_s^n \right\rangle ds $ to the corresponding term. By Lemma~\ref{lem_derivative_of_z}, it is enough to show that $\left( F'(Z_{\cdot }^n)h \right)'_n \to \left( F'(Z_{\cdot })h \right)'=F''(Z_{\cdot })\dot{Z}_{\cdot }h+F'(Z_{\cdot })h'$ a.e. a.s. as $n\to\infty$. But this easily follows from the integration by parts formula.
\end{proof}

%We consider the (random) function $u\mapsto\I_{\left\{ Z_t(u)\not= 0 \right\}}$ as a random element in $\L_2$. Then the composition $L_t^0:=L_t\mo{\I_{\left\{ Z_t\not= 0 \right\}}}$, $t \in[0,T]$, is a random elements in $\LL{ \HS}$.

\begin{theorem} %Properties of quadratic variation of L2 valued processes
  \label{the_properties_of_quadratic_variation_of_l2_valued_processes}
  Let the process $Z_t$, $t \in[0,T]$, and the random element $L \in \LL{ \HS}$ be as above. Then the following equality 
  \[
    L_{\cdot }\mo{\I_{\{Z_{\cdot}\not= 0\}}}=L \quad \mbox{in}\ \ \LL{ \HS}\ \ \mbox{a.s.}
  \]
  holds.
  %Let $L_t^0:=L_t\mo{\I_{\left\{ Z_t\not= 0 \right\}}}$, $t \in[0,T]$, where the (random) function $u\mapsto\I_{\left\{ Z_t(u)\not= 0 \right\}}$ is considered as a random element in $\L_2$ for every $t \in[0,T]$. Then $L^0=L$ a.s. in $\LL{ \HS}$.
\end{theorem}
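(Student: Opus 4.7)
The plan is to apply the It\^o-type formula of Proposition~\ref{pro_ito_formula} to a smooth regularisation of $|\cdot|$ and to extract the vanishing of $L_s$ on $\{Z_s=0\}$ via a rescaling as $\epsilon\downarrow 0$. I set $F_\epsilon(x):=\sqrt{x^2+\epsilon}$, so that $|F_\epsilon'|\leq 1$ while $F_\epsilon''(x)=\epsilon(x^2+\epsilon)^{-3/2}\leq \epsilon^{-1/2}$; the key observation is that the \emph{rescaled} second derivative $\sqrt{\epsilon}\,F_\epsilon''(x)=\epsilon^{3/2}(x^2+\epsilon)^{-3/2}$ converges pointwise to $\I_{\{x=0\}}$ as $\epsilon\downarrow 0$, and is uniformly bounded by $1$.

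Fix a non-negative test function $h\in\Cf^1_{\alpha_0}[0,1]$, apply Proposition~\ref{pro_ito_formula} with $F=F_\epsilon$, and multiply both sides by $\sqrt{\epsilon}$. Every term in which $F_\epsilon$ or $F_\epsilon'$ appears without $F_\epsilon''$ is uniformly bounded, so the $\sqrt{\epsilon}$ factor kills it in the limit, including the martingale $\sqrt{\epsilon}\,\M_{F_\epsilon,h}$, whose quadratic variation $\int_0^t\|L_sF_\epsilon'(Z_s)h\|^2\,ds\leq\int_0^t\|L_sh\|^2\,ds$ is bounded uniformly in $\epsilon$. Only the two terms carrying $F_\epsilon''$ survive. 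Applying dominated convergence (with $|\dot Z_s|^2 h$ as dominant, integrable by Lemma~\ref{lem_derivative_of_z}, and with the kernel representation $\langle L_s\mo{\psi},L_s\rangle_{\HS}=\int_0^1\psi(v)K_s(v)\,dv$, where $K_s(v):=\int_0^1\ell_s(u,v)^2\,du$ satisfies $\int_0^1 K_s(v)\,dv=\|L_s\|_{\HS}^2\in L^1([0,T])$, as the other dominant) one obtains the limiting identity
\[
  \int_0^t\!\!\int_0^1 \I_{\{Z_s(u)=0\}}\dot Z_s(u)^2\, h(u)\,du\,ds \;=\; \int_0^t \langle L_s\mo{h\,\I_{\{Z_s=0\}}},L_s\rangle_{\HS}\,ds.
\]

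By Lemma~\ref{lem_derivative_of_z}, $Z_s\in H^1([0,1])$ with weak derivative $\dot Z_s$ for almost every $s$, almost surely; Stampacchia's chain-rule property for Sobolev functions then gives $\dot Z_s=0$ a.e.\ on the level set $\{Z_s=0\}$, so the left-hand side of the identity vanishes for every admissible $h\geq 0$. Replacing $h$ by $h^2$ (which still lies in $\Cf^1_{\alpha_0}[0,1]$ and is non-negative) and using the cyclic-trace identity $\langle L\mo{\phi^2},L\rangle_{\HS}=\|L\mo{\phi}\|_{\HS}^2$ converts the resulting equation into $\int_0^t\|L_s\mo{h\,\I_{\{Z_s=0\}}}\|_{\HS}^2\,ds=0$, so $L_s\mo{h\,\I_{\{Z_s=0\}}}=0$ in $\HS$ for a.e.\ $s$ almost surely. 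Approximating $\mathbf 1_{[0,1]}$ by a pointwise-increasing sequence of non-negative $h_n\in\Cf^1_{\alpha_0}[0,1]$ and using continuity of $L_s$ then yields $L_s\mo{\I_{\{Z_s=0\}}}=0$, which is equivalent to the desired equality $L_\cdot\mo{\I_{\{Z_\cdot\neq 0\}}}=L$ in $\LL{\HS}$.

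The main obstacle I anticipate is the careful justification of the $\epsilon\downarrow 0$ limit in the Hilbert--Schmidt drift $\langle L_s\mo{F_\epsilon''(Z_s)h},L_s\rangle_{\HS}$, which requires pushing dominated convergence through both the spatial and temporal integrals via the kernel representation above, together with the a.s.\ (rather than $L^2$) convergence of the martingale term, which may only hold along a subsequence. A secondary subtlety is the Dirichlet case, where $\mathbf 1\notin\Cf^1_0[0,1]$ and must be reached by approximation; luckily, the pointwise-increasing sequence $h_n$ combined with the monotone-convergence step above dispatches this uniformly.
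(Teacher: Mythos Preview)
Your approach is essentially the paper's: apply Proposition~\ref{pro_ito_formula} to a smooth regularisation whose second derivative approximates $\I_{\{0\}}$, pass to the limit, and invoke the Stampacchia-type Lemma~\ref{lem_truncation} to kill the $\dot Z$ term. The paper chooses $F_\eps(x)=\int_{-\infty}^x\int_{-\infty}^y\psi(r/\eps)\,dr\,dy$ with $\psi$ a bump at $0$, which gives $|F_\eps'|\le 2\eps$ and $0\le F_\eps''\le 1$ directly, so no $\sqrt\eps$ rescaling is needed; this is cosmetically cleaner but your variant is equally valid.

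Two minor corrections. First, your bound $\|L_sF_\eps'(Z_s)h\|\le\|L_sh\|$ is not justified for a general Hilbert--Schmidt operator $L_s$, since $\varphi\mapsto\|L_s\varphi\|$ is not monotone in $|\varphi|$; the correct uniform bound is $\|L_s\|_{\HS}\|h\|$, and one then argues via the localisation sequence as in the proof of Lemma~\ref{lem_derivative_of_z}. Second, Proposition~\ref{pro_ito_formula} requires only $h\in\Cf^1[0,1]$ with no boundary condition, so in both the Neumann and Dirichlet cases you may take $h\equiv 1$ directly and appeal to Lemma~\ref{lem_norm_of_composition_of_hilbert_shmidt_operators} to get $\langle L_s\mo{\I_{\{Z_s=0\}}},L_s\rangle_{\HS}=\|L_s\mo{\I_{\{Z_s=0\}}}\|_{\HS}^2$; the $h\mapsto h^2$ step and the approximation of $\mathbf 1$ are unnecessary.
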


\begin{proof} %
  In order to prove the theorem, we will use Proposition~\ref{pro_ito_formula}. We fix a function $\psi \in \Cf\left( \R  \right)$ such that $\supp \psi \subset [-1,1]$, $0\leq \psi(x)\leq 1$, $x \in \R $, and $\psi(0)=1$. Define $\psi_{\eps}(x):=\psi\left( \frac{x}{ \eps } \right)$, \ $x \in \R $, \ and 
  \[
    F_{\eps}(x):=\int_{ -\infty }^{ x } \left(\int_{ -\infty }^{ y } \psi_{\eps}(r)dr\right)dy, \quad x \in \R .  
  \]
  Then $0\leq F_{\eps}'(x)\leq 2\eps$, $x \in \R $, and $F_{\eps}''(x)\to \I_{\left\{ 0 \right\}}(x)$ as $\eps \to 0+$ for all $x \in \R $. 
  
  Let a non-negative function $h \in \Cf^1[0,1]$ be fixed. By Proposition~\ref{pro_ito_formula}, 
  \begin{align*}
    \langle F_{\eps}(Z_t) , h \rangle &=  \langle F_{\eps}(Z_0) , h \rangle- \frac{1}{ 2 }\int_{ 0 }^{ t } \left\langle \left(F_{\eps}'(Z_s)h\right)' , \dot{Z}_s \right\rangle ds +     \int_{ 0 }^{ t } \left\langle F_{\eps}'(Z_s)h , a_s \right\rangle ds \\
    &+ \frac{1}{ 2 }\int_{ 0 }^{ t } \left\langle L_s \mo{F_{\eps}''(Z_s)h} , L_s \right\rangle_{\HS} ds +\M_{F_{\eps},h}(t), \quad t \in[0,T],
  \end{align*}
  and the quadratic variation of the local $(\F_t)$-martingale equals
  \[
    \left[ \M_{F_{\eps},h} \right]_t= \int_{ 0 }^{ t } \left\|L_sF_{\eps}'(Z_s)h\right\|^2 ds, \quad t \in[0,T], 
  \]
  Making $\eps \to 0+$, we can immediately conclude that for every $t \in[0,T]$ 
  \[
    \left|\langle F_{\eps}(Z_t) , h \rangle-\langle F_{\eps}(Z_0) , h \rangle\right|\leq 2\eps \|Z_t-Z_0\|\|h\| \to 0 \quad \mbox{a.s.},
  \]
  and 
  \[
    \left| \int_{ 0 }^{ t } \langle F'_{\eps}(Z_s)h , a_s \rangle ds  \right|\leq 2\eps \|h\| \int_{ 0 }^{ t } \|a_s\|ds \to 0 \quad \mbox{a.s.}
  \]
  Similarly to the proof of Lemma~\ref{lem_derivative_of_z}, using the localization sequence, one can show that $M_{F_{\eps},h}(t) \to 0$ in probability. By the dominated convergence theorem and Lemma~\ref{lem_convergence_of_composition_in_hilbert_shmidt_space},
  \[
    \int_{ 0 }^{ t } \left\langle L_s\mo{F''_{\eps}(Z_s)h} , L_s \right\rangle_{\HS}ds \to \int_{ 0 }^{ t } \left\langle L_s\mo{\I_{\left\{ Z_s=0 \right\}}h} , L_s \right\rangle_{\HS}ds \quad \mbox{a.s.}  
  \]
  Again, by the dominated convergence theorem and Lemma~\ref{lem_truncation}, we have 
  \begin{align*}
    \int_{ 0 }^{ t } \left\langle (F'_{\eps}(Z_s)h)' , \dot{Z}_s \right\rangle ds &=  \int_{ 0 }^{ t } \left\langle F''_{\eps}(Z_s)\dot{Z_s}h , \dot{Z}_s \right\rangle ds \\
    &+ \int_{ 0 }^{ t } \left\langle F'(Z_s)h' , \dot{Z}_s \right\rangle ds \to \int_{ 0 }^{ t } \left\|\I_{\left\{ Z_s=0 \right\}}\dot{Z}_s \sqrt{ h }\right\|^2 ds=0 \quad \mbox{a.s.}  
  \end{align*}

  We have obtained that for every $t \in[0,T]$ 
  \[
    \int_{ 0 }^{ t } \left\langle L_s\mo{\I_{\left\{ Z_s=0 \right\}}h} , L_s \right\rangle_{\HS} ds =0 \quad \mbox{a.s.}
  \]
  Then taking $h=1$ and applying Lemma~\ref{lem_norm_of_composition_of_hilbert_shmidt_operators}, it is easy to see that 
  \[
    \int_{ 0 }^{ T } \left\|L_s\mo{\I_{\left\{ Z_s=0 \right\}}}\right\|^2_{\HS}ds=0.
  \]
  The proof of the theorem is completed.
\end{proof}

\subsection{Proof of the existence theorem}%?? change the name of proof of Theorem ref
\label{sub:proof_of_theorem}

In this section, we will consider the random element $\bX^n$ defined in Section~\ref{sub:martingale_problem_for_limit_points_of_the_discrete_approximation}. According to Proposition~\ref{pro_subsequence_of_xn}, there exists a subsequence $N \subset \N$ such that 
\[
  \bX^n=\left( \tilde{X}^n,X^n,\lambda^n \I_{\{X^n=0\}}, \I_{\{X^n>0\}},\Gamma^n \right) \to \left( \tilde{X},X,a,\sigma,\Gamma \right) \quad \mbox{in}\ \  \W_{\L_2}
\]
in distribution along $N$. As before, without loss of generality, we may assume that $N=\N$. By the Skorokhod representation theorem, we can assume that this sequence converges almost surely. Since $\tilde{X}^n \to \tilde{X}$ in $\Cf\left( [0,T], \Cf[0,1] \right)$ a.s., and a.s. for all $t \in[0,T]$ the quality $\tilde{X}_t=X_t$ in $\L_2$ holds, the inequality 
\[
  \max\limits_{ t \in[0,T] }\big\|\tilde{X}^n_t-X^n_t\big\|\leq \max\limits_{ t \in[0,T] }\sup\limits_{ 0\leq \delta\leq \frac{1}{ n } }\max\limits_{ |u-u'|\leq \delta }\big|\tilde{X}^n_t(u)-\tilde{X}^n_t(u')\big|^2
\]
implies that 
\begin{equation} %convergence of Xn to X
  \label{equ_convergence_of_xn_to_x}
  \P{ \forall t \in[0,T],\ \ X^n_t \to X_t\ \ \mbox{a.e.} }=1.
\end{equation}

{\bf I.} We will first show that $\Gamma=\mo{\I_{\{X_{\cdot}>0\}}}Q^2 \mo{\I_{\{X_{\cdot}>0\}}}$ a.s. 

Using Proposition~\ref{pro_subsequence_of_xn}~(ii), there exists a random element $L$ in $\LL{ \HS}$ such that $\Gamma=L^2$ a.s. Next, by Proposition~\ref{pro_subsequence_of_xn} and Theorem~\ref{the_properties_of_quadratic_variation_of_l2_valued_processes}, $L_{\cdot } \I_{\{X_{\cdot}>0\}}=L$ a.s. Therefore, using the convergence of $\Gamma^n=\pr^n\I_{\{X^n_{\cdot}>0\}}Q^2 \I_{\{X^n_{\cdot}>0\}}\pr^n$ to $\Gamma$ in $\rB\left(\HS\right)$ a.s., we obtain that for evert $t \in[0,T]\cap \Q$ and $k,l\geq 1$ almost surely
\begin{align*}
  \int_{ 0 }^{ t } \langle \Gamma_s , &e_k\odot e_l \rangle_{\HS} ds= \int_{ 0 }^{ t } \left\langle \Gamma_s e_l , e_k \right\rangle ds= \int_{ 0 }^{ t } \left\langle L_se_l , L_se_k \right\rangle ds \\
  &= \int_{ 0 }^{ t } \left\langle L_s \I_{\{X_s>0\}}e_l , L_s \I_{\{X_s>0\}}e_k \right\rangle ds\\
  &= \int_{ 0 }^{ t } \left\langle \Gamma_s \I_{\{X_s>0\}}e_l , \I_{\{X_s>0\}}e_k \right\rangle ds \\
  &= \lim_{ n\to\infty }\int_{ 0 }^{ t } \left\langle \Gamma_s^n \I_{\{X_s>0\}}e_l , \I_{\{X_s>0\}}e_k \right\rangle ds\\
  &= \lim_{ n\to\infty }\int_{ 0 }^{ t } \left\langle Q \I_{\{X^n_s>0\}}\pr^n\left(\I_{\{X_s>0\}}e_l\right) , Q \I_{\{X^n_s>0\}}\pr^n\left(\I_{\{X_s>0\}}e_k\right) \right\rangle ds\\
  &= \lim_{ n\to\infty }\int_{ 0 }^{ t } \left\langle Q \pr^n\left(\I_{\{X^n_s>0\}}\I_{\{X_s>0\}}e_l\right) , Q \pr^n\left(\I_{\{X^n_s>0\}}\I_{\{X_s>0\}}e_k\right) \right\rangle ds\\
  &= \int_{ 0 }^{ t } \left\langle Q \I_{\{X_s>0\}}e_l , Q \I_{\{X_s>0\}}e_k \right\rangle ds\\
  &= \int_{ 0 }^{ t } \left\langle \I_{\{X_s>0\}}Q^2 \I_{\{X_s>0\}}, e_k\odot e_l \right\rangle_{\HS} ds.
\end{align*}
In the last equality, we have used the fact that $\I_{(0,+\infty)}(x_n)\I_{(0,+\infty)}(x) \to \I_{(0,+\infty)}(x)$ as $x_n \to x$ in $\R $, convergence~\eqref{equ_convergence_of_xn_to_x} and the dominated convergence theorem. Since the family $\left\{ \I_{[0,t]}e_k\odot e_l,\ t \in[0,T]\cap \Q,\ k,l\geq 1 \right\}$ is countable and its linear span is dense in $\LL{ \HS}$, we trivially get that 
\begin{equation} %equality for Gamma
  \label{equ_equality_for_gamma}
  \Gamma=\I_{\{X_{\cdot}>0\}}Q^2 \I_{\{X_{\cdot}>0\}}\quad \mbox{a.s.} 
\end{equation}

{\bf II.} Let $\chi^2$ be defined by~\eqref{equ_function_chi}. We next want to show that 
\begin{equation} %equality for sigma and I
  \label{equ_equality_for_sigma_and_i}
  \I_{\left\{ \chi>0 \right\}}\sigma=\I_{\left\{ \chi>0 \right\}}\I_{\{X>0\}}\quad  \mbox{in}\ \  \LL{ \L_2}\ \ \mbox{a.s.}
\end{equation}
But this will directly follow from the following lemma.

\begin{lemma} %convergence of sigma
  \label{lem_convergence_of_sigma}
  Let $Z^n_t$, $t \in [0,T]$, $n\geq 1$, be a sequence of $\L_2$-valued measurable functions such that $Z^n_t\geq 0$ for all $t \in[0,T]$ and $n\geq 1$, and 
  \[
    \leb_T\otimes \leb_1\left\{ (t,u) \in [0,T]\times [0,1]:\ Z^n_t(u) \not\to Z_t(u) \right\}=0.
  \]
  If 
  \begin{equation} %convergence of QZpr
  \label{equ_convergence_of_qzpr}
    \pr^n\mo{\I_{\{Z^n_{\cdot}>0\}}}Q^2\mo{\I_{\{Z^n_{\cdot}>0\}}}\pr^n \to \mo{\I_{\{Z_{\cdot}>0\}}}Q^2\mo{\I_{\{Z_{\cdot}>0\}}}
  \end{equation}
  in $\rB\left(\HS\right)$, and \ $\I_{\{Z^n>0\}} \to \sigma$ \ in $\rB\left(\L_2\right)$ as $n\to\infty$, then 
  \begin{equation} %equality for Zsig
  \label{equ_equality_for_zsig}
    \I_{\left\{ \chi>0 \right\}}\sigma=\I_{\left\{ \chi>0 \right\}}\I_{\{Z>0\}}.
  \end{equation}
\end{lemma}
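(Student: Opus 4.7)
My plan is to reduce the lemma to an integral identity and then prove that identity by evaluating the trace of a natural sequence of operators in two complementary ways. First, I would observe that $\sigma \ge \I_{\{Z > 0\}}$ a.e.\ on $[0,T]\times[0,1]$: on $\{Z > 0\}$ the a.e.\ convergence $Z^n \to Z$ together with $Z^n\ge 0$ forces $\I_{\{Z^n > 0\}} \to 1$ a.e., so bounded convergence gives $\I_{\{Z^n > 0\}}\I_{\{Z > 0\}} \to \I_{\{Z > 0\}}$ strongly in $\LL{\L_2}$; combining this with the trivial pointwise estimate $\I_{\{Z^n > 0\}} \ge \I_{\{Z^n > 0\}}\I_{\{Z > 0\}}$ and the weak convergence $\I_{\{Z^n > 0\}} \to \sigma$ yields $\sigma \ge \I_{\{Z > 0\}}$. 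Since $\sigma - \I_{\{Z>0\}} \ge 0$ and $\chi^2 \ge 0$, the conclusion $\I_{\{\chi>0\}}\sigma = \I_{\{\chi>0\}}\I_{\{Z>0\}}$ is equivalent to
\[
  \int_0^T\int_0^1 \sigma_t \chi^2\, du\, dt = \int_0^T\int_0^1 \I_{\{Z_t > 0\}}\chi^2\, du\, dt,
\]
which I would prove by computing $\operatorname{tr}(A^n_t)$, with $A^n_t := \pr^n \mo{\varphi^n_t} Q^2 \mo{\varphi^n_t}\pr^n$ and $\varphi^n_t := \I_{\{Z^n_t > 0\}}$, from two angles.

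In the application $Z^n=X^n$ is piecewise constant on the partition defining $\pr^n$, so $\varphi^n_t$ lies in the range of $\pr^n$ and the multiplication operator $\mo{\varphi^n_t}$ commutes with $\pr^n$. This commutation yields $\pr^n(\varphi^n_t e_l) = \varphi^n_t \pr^n e_l$, and taking traces in $\{e_l\}$ gives
\[
  \operatorname{tr}(A^n_t) = \sum_l \mu_l^2 \|\pr^n(\varphi^n_t e_l)\|^2 = \int_0^1 \varphi^n_t(u)\,\chi^2_n(u)\,du, \qquad \chi^2_n := \sum_l \mu_l^2 (\pr^n e_l)^2.
\]
Dominated convergence in the $l$-series, using $\|\pr^n e_l\|_\infty \le \|e_l\|_\infty$ and $\sum_l \mu_l^2 \|e_l\|_\infty^2 < \infty$, produces $\chi^2_n \to \chi^2$ in $L^1[0,1]$; together with the weak $\rB(\L_2)$-convergence $\varphi^n \to \sigma$ this yields $\int_0^T \zeta(t)\operatorname{tr}(A^n_t)\,dt \to \int_0^T \zeta(t)\int_0^1 \sigma_t\chi^2\,du\,dt$ for every $\zeta \in L^2[0,T]$.

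For the second evaluation, the limit operator $A_t = \mo{\I_{\{Z_t>0\}}}Q^2\mo{\I_{\{Z_t>0\}}}$ satisfies $\operatorname{tr}(A_t) = \int \I_{\{Z_t>0\}}\chi^2\,du$, and I would extract $\int \zeta\operatorname{tr}(A^n_t)\,dt \to \int \zeta\operatorname{tr}(A_t)\,dt$ from the weak $\rB(\HS)$-convergence by approximating $A^n_t$ via its $Q$-truncation $\widetilde A^n_t(N) := \pr^n\mo{\varphi^n_t}Q_N^2\mo{\varphi^n_t}\pr^n$, where $Q_N := \sum_{l\le N} \mu_l\, e_l\odot e_l$. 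The difference $A^n_t-\widetilde A^n_t(N)$ has $\HS$-norm bounded by $\|Q^2-Q_N^2\|_{\HS} = (\sum_{l>N}\mu_l^4)^{1/2}$, uniformly in $n$ and $t$, and the analogous bound holds for $A_t - \widetilde A_t(N)$; this uniform control transfers the weak $\HS$-convergence to the truncated operators, whose traces $\operatorname{tr}(\widetilde A^n_t(N)) = \sum_{l\le N}\mu_l^2\|\pr^n(\varphi^n_t e_l)\|^2$ and $\operatorname{tr}(\widetilde A_t(N)) = \sum_{l\le N}\mu_l^2\|\I_{\{Z_t>0\}} e_l\|^2$ are directly manageable. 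Matching the two limits forces $\int \sigma\chi^2\,du = \int \I_{\{Z>0\}}\chi^2\,du$ a.e., and together with $\sigma \ge \I_{\{Z>0\}}$ this gives $(\sigma-\I_{\{Z>0\}})\chi^2 = 0$ a.e., which is the lemma. The main obstacle I anticipate is the interchange of the $n\to\infty$ and $N\to\infty$ limits in the $Q$-truncation step: ensuring that the trace of $\widetilde A^n_t(N)$ converges to $\operatorname{tr}(\widetilde A_t(N))$ as $n\to\infty$ in a manner that is uniform enough in $N$ to close the approximation.
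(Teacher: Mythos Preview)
Your reduction to the integral identity via $\sigma\ge \I_{\{Z>0\}}$ is fine, and the first half of your trace computation is defensible (modulo the two caveats below). The real obstruction is the \emph{second} evaluation: you want $\int\zeta\,\operatorname{tr}(A^n_t)\,dt\to\int\zeta\,\operatorname{tr}(A_t)\,dt$ to come out of the weak $\rB(\HS)$-convergence, but trace is not a weakly continuous functional on $\HS$, and your truncation does not rescue this. The uniform bound $|\operatorname{tr}(A^n_t)-\operatorname{tr}(\widetilde A^n_t(N))|\le\sum_{l>N}\mu_l^2$ is correct, but for fixed $N$ the quantity $\operatorname{tr}(\widetilde A^n_t(N))=\sum_{l\le N}\mu_l^2\|\pr^n(\varphi^n_t e_l)\|^2$ is \emph{not} a linear functional of $A^n_t$, so the hypothesis $A^n\to A$ weakly tells you nothing about it. If instead you compute this truncated trace directly (using your own commutation and $(\pr^n e_l)^2\to e_l^2$), you get $\int\zeta\,\operatorname{tr}(\widetilde A^n_t(N))\,dt\to\sum_{l\le N}\mu_l^2\int\zeta\int\sigma_t e_l^2$, i.e.\ the limit is again expressed through $\sigma$, not through $\I_{\{Z>0\}}$. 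Both evaluations then coincide and the argument is circular: the weak $\HS$-hypothesis is never actually used to pin down the limit as $\int\I_{\{Z>0\}}\chi^2$. The paper avoids this by testing the weak $\HS$-convergence against the \emph{specific} rank-one operators $(\I_{\{Z=0\}}e_k)\odot(\I_{\{Z=0\}}e_k)$, which are legitimate test elements of $\LL{\HS}$; this yields $\int_0^T\|Q\,\I_{\{Z^n>0\}}\I_{\{Z=0\}}e_k\|^2\,dt\to 0$, and from the diagonal term in the $\{e_l\}$-expansion one extracts $\int_0^T\langle\tilde\I^n_t e_k,e_k\rangle^2\,dt\to 0$ for each $k$ with $\mu_k>0$, hence $\chi\tilde\I^n\to 0$ strongly. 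That is the step your plan is missing.

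Two smaller points. First, the commutation $\mo{\varphi^n_t}\pr^n=\pr^n\mo{\varphi^n_t}$ is \emph{not} a hypothesis of the lemma as stated (the $Z^n$ are general), so even if the rest worked you would only be proving the special case $Z^n=X^n$; the paper's argument does not need this. Second, the dominated-convergence justification ``$\|\pr^n e_l\|_\infty\le\|e_l\|_\infty$ and $\sum_l\mu_l^2\|e_l\|_\infty^2<\infty$'' is unjustified: the eigenvectors $e_l$ of a generic Hilbert--Schmidt operator on $\L_2$ need not lie in $L^\infty$, let alone satisfy that summability. One can salvage $\chi_n^2\to\chi^2$ in $L^1$ via the conditional-Jensen bound $(\pr^ne_l)^2\le\pr^n(e_l^2)$ and a generalized dominated convergence with majorant $\pr^n(\chi^2)$, but your stated reason does not work.
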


We postpone the proof of the lemma to the end of this section.
\vspace{3mm}

{\bf III.} Using equality~\eqref{equ_equality_for_sigma_and_i}, Proposition~\ref{pro_subsequence_of_xn}~(i) and assumption~\eqref{equ_the_main_condition_for_existance} of Theorem~\ref{the_existence_of_solution_to_spde}, we get
\begin{equation} %equality for a
  \label{equ_equality_for_a}
  \begin{split}
  a&= \lambda(1-\sigma)=\lambda \I_{\left\{ \chi>0 \right\}}\left(1-\sigma\right)\\
  &= \lambda \I_{\left\{ \chi>0 \right\}}\left(1-\I_{\{X>0\}}\right) = \lambda \I_{\left\{ \chi>0 \right\}} \I_{\{X=0\}}=\lambda \I_{\{X=0\}}.
\end{split}
\end{equation}

Hence, using Proposition~\ref{pro_subsequence_of_xn}~(iii) and equalities~\eqref{equ_equality_for_gamma},~\eqref{equ_equality_for_a}, we have that for every $\varphi \in \Cf^2_{\alpha_0}[0,1]$ almost surely
\begin{equation} %martingale problem for concrete a Gamma
  \label{equ_martingale_problem_for_concrete_a_gamma}
  \begin{split}
    \left\langle X_t , \varphi \right\rangle&= \left\langle g , \varphi \right\rangle + \frac{1}{ 2 }\int_{ 0 }^{ t } \left\langle X_s , \varphi'' \right\rangle ds+ \int_{ 0 }^{ t } \left\langle \lambda \I_{\{X_s=0\}} , \varphi \right\rangle ds\\
    &+ \int_{ 0 }^{ t } \left\langle f(X_s) , \varphi \right\rangle ds + \left\langle M_t , \varphi \right\rangle, \quad t \in[0,T],
  \end{split}
\end{equation}
and $\left\langle M_t , \varphi \right\rangle$, $t \in[0,T]$, is a continuous square-integrable $(\F^{X,M})$-martingale with quadratic variation 
\[
  \left[ \left\langle M_{\cdot} , \varphi \right\rangle \right]_t=\int_{ 0 }^{ t } \left\|Q \I_{\{X_s>0\}}\varphi\right\|^2ds, \quad t \in[0,T].
\]
In particular,~\eqref{equ_martingale_problem_for_concrete_a_gamma} yields that $\F^{X,M}_t=\F^X$, $t \in[0,T]$.

This finishes the proof of Theorem~\ref{the_existence_of_solution_to_spde}.

\begin{proof}[Proof of Lemma~\ref{lem_convergence_of_sigma}] %
  It is easily seen that convergence~\eqref{equ_convergence_of_qzpr} is equivalent to the convergence 
  \[
    \mo{\I_{\{Z^n_{\cdot}>0\}}}Q^2\mo{\I_{\{Z^n_{\cdot}>0\}}} \to \mo{\I_{\{Z_{\cdot }>0\}}}Q^2\mo{\I_{\{Z_{\cdot}>0\}}} \quad \mbox{in}\ \ \rB\left(\HS\right)
  \]
  as $n\to\infty$. So, for every $\varphi \in \LL{ \L_2}$, we have 
   \begin{align*}
     \int_{ 0 }^{ T } \left\|Q \I_{\{Z^n_t>0\}}\varphi_t\right\|^2dt&= \left\langle \mo{\I_{\{Z^n_{\cdot }>0\}}}Q^2 \mo{\I_{\{Z^n_{\cdot }>0\}}}, \varphi_{\cdot }\odot \varphi_{\cdot } \right\rangle_{\HS,T} \\
     & \to \left\langle \mo{\I_{\{Z_{\cdot}>0\}}}Q^2 \mo{\I_{\{Z_{\cdot}>0\}}} , \varphi_{\cdot }\odot \varphi_{\cdot } \right\rangle_{\HS,T}\\
     &=\int_{ 0 }^{ T } \left\|Q \I_{\{Z_t>0\}} \varphi_t\right\|^2dt 
   \end{align*}
   as $n\to\infty$, where $\varphi_{\cdot }\odot \varphi_{\cdot }$ is defined as $\varphi_t\odot \varphi_t$, $t \in[0,T]$. Replacing $\varphi$ by $e_k \I_{\{Z=0\}}$ for every $k\geq 1$, we obtain that
   \begin{equation} %convergence of QIe
  \label{equ_convergence_of_qie}
     \int_{ 0 }^{ T } \big\|Q \I_{\{Z^n_t>0\}}\I_{\{Z_t=0\}}e_k\big\|^2 dt \to \int_{ 0 }^{ T } \left\|Q \I_{\{Z_t>0\}}\I_{\{Z_t=0\}}e_k\right\|^2 dt=0
   \end{equation}
   as $n\to\infty$. We set $\tilde{\I}^n_t:=\I_{\{Z^n_t>0\}}\I_{\{Z_t=0\}}$, $t \in [0,T]$. Then~\eqref{equ_convergence_of_qie} and the equality
   \begin{align*}
     \int_{ 0 }^{ T } \left\|Q \tilde{\I}^n_te_k\right\|^2dt&= \sum_{ l=1 }^{ \infty } \int_{ 0 }^{ T } \mu_l^2 \left\langle \tilde{\I}^n_te_k , e_l \right\rangle^2dt  
   \end{align*}
   imply 
   \[
     \int_{ 0 }^{ T } \left\langle \tilde{\I}^n_te_k , e_k  \right\rangle^2 dt \to 0, \quad n\to\infty, 
   \]
   for every $k\geq 1$ such that $\mu_k>0$. So, by the H\"older inequality,
   \[
     \left(\int_{ 0 }^{ T } \int_{ 0 }^{ 1 } \tilde{\I}^n_t(u)e_k^2(u) dtdu  \right)^2\leq T \int_{ 0 }^{ T } \left\langle \tilde{\I}^n_t e_k, e_k \right\rangle^2dt \to 0, \quad n\to\infty. 
   \]
   Taking into account the equality $\tilde{\I}^n_t=\left( \tilde{\I}^n_t \right)^2$, $t \in[0,T]$, we can conclude that 
   \begin{equation} %convergence of til I ek
  \label{equ_convergence_of_til_i_ek}
  \tilde{\I}^n e_k \to 0 \quad \mbox{in}\ \ \LL{ \L_2}, \quad n\to\infty,
   \end{equation}
   for every $k\geq 1$ such that $\mu_k>0$.

   We claim that $\chi \tilde{\I}^n$, $n\geq 1$, converges to $0$ in $\LL{ \L_2}$ as $n\to\infty$. Indeed, by convergence~\eqref{equ_convergence_of_til_i_ek} and the dominated convergence theorem, 
   \begin{equation} %convergence of chiI
  \label{equ_convergence_of_chii}
     \left\|\chi \tilde{\I}^n\right\|_{\L_2,T}^2=\sum_{ k=1 }^{ \infty } \mu_k^2\int_{ 0 }^{ T } \int_{ 0 }^{ 1 }   \tilde{\I}^n_t(u) e_k^2(u) dtdu \to 0, \quad n\to\infty.    
   \end{equation}

   Next, since $\I_{\{Z^n>0\}} \to \sigma$ in the weak topology of $\LL{ \L_2}$ as $n\to\infty$, and $\I_{\{Z>0\}}$, $\I_{\{Z=0\}}$ are uniformly bounded, we trivially obtain that
   \begin{equation} %convergence sigma I
  \label{equ_convergence_sigma_i}
  \I_{\{Z^n>0\}}\I_{\{Z>0\}} \to \sigma \I_{\{Z>0\}}, \quad \tilde{\I}^n=\I_{\{Z^n>0\}}\I_{\{Z=0\}} \to \sigma \I_{\{Z=0\}},
   \end{equation}
   in the weak topology of $\LL{ \L_2}$ as $n\to\infty$. Using the fact that 
   \[
     \I_{(0,+\infty)}(x_n) \I_{(0,+\infty)}(x) \to \I_{(0,+\infty)}(x) \quad \mbox{as}\ \ x_n \to x \ \ \mbox{in}\ \ \R,
   \]
   and the uniqueness of a weak limit, we get 
   \begin{equation} %sigma I pos
  \label{equ_sigma_i_pos}
  \sigma \I_{\{Z>0\}}=\I_{\{Z>0\}}.
   \end{equation}
   Since $\chi \in \L_2$, convergence~\eqref{equ_convergence_sigma_i} yields 
   \[
     \int_{ 0 }^{ T } \int_{ 0 }^{ 1 } \chi(u) \tilde{\I}^n_t(u)dtdu  \to \int_{ 0 }^{ T } \int_{ 0 }^{ 1 } \chi(u) \sigma_t(u) \I_{\{Z_t=0\}}(u)dtdu,\quad n\to\infty.
   \]
   On the other hand side, $\chi \tilde{\I}^n \to 0$ in $\LL{ \L_2}$, by~\eqref{equ_convergence_of_chii}. Hence 
   \[
     \chi \sigma \I_{\{Z=0\}}=0.
   \]
   The latter equality and~\eqref{equ_sigma_i_pos} yield
   \[
     \chi \sigma=\chi \sigma \I_{\{Z>0\}}+\chi \sigma \I_{\{Z=0\}}=\chi\I_{\{Z>0\}} \quad \mbox{in}\ \ \LL{ \L_2}
   \]
   that is equivalent to equality~\eqref{equ_equality_for_zsig}. This completes the proof of the lemma.
\end{proof}

\subsubsection*{Acknowledgement} 

The author is very grateful to Prof. Dr. Dorogovtsev for valuable discussions during the work on this paper.

\appendix

\section{Auxiliary statements}%
\label{sec:auxiliary_statememts}

In this section, we will prove the following lemma.

\begin{lemma} %quadratic variation of rv semimartingale
  \label{lem_quadratic_variation_of_rv_semimartingale}
  Let $\xi_k(t)$, $t \geq 0$, $k \in [2]$, be continuous real valued semimartingales with respect to the same filtration. Let also the quadratic variations equal
  \[
    [\xi_k,\xi_l]_t=\int_{ 0 }^{ t } \sigma_{k,l}(s)ds, \quad t\geq 0, \ \ k,l \in [2]. 
  \]
  Then almost surely for all $k,l \in [2]$
  \[
    [\xi_k,\xi_l]_t=\int_{ 0 }^{ t } \sigma_{k,l}(s)\I_{\left\{ \xi_k(s)\not= 0 \right\}}\I_{\left\{ \xi_l(s)\not= 0 \right\}}ds,\quad t \geq 0.
  \]
\end{lemma}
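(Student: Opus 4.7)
The plan is to reduce the statement to the classical fact that, for any continuous real-valued semimartingale $\xi$, the occupation-times formula implies $\int_0^t \I_{\{\xi(s)=0\}}\, d[\xi]_s = 0$ almost surely, because the Lebesgue measure of the single point $\{0\}$ is zero. Applying this to $\xi_k$ for each $k\in[2]$ and using $d[\xi_k]_s = \sigma_{k,k}(s)\,ds$ gives, almost surely for every $t\ge 0$,
\[
  \int_0^t \sigma_{k,k}(s)\, \I_{\{\xi_k(s)=0\}}\,ds = 0, \qquad k\in[2].
\]

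Next I would handle the off-diagonal terms via the Kunita--Watanabe inequality. For the diagonal case $k=l$, the previous display already gives $[\xi_k]_t = \int_0^t \sigma_{k,k}(s)\I_{\{\xi_k(s)\neq 0\}}\,ds$. For $k\neq l$, Kunita--Watanabe in its version with absolute covariation asserts that for any bounded non-negative measurable process $H$,
\[
  \int_0^t H_s\, |d[\xi_k,\xi_l]|_s \le \Bigl(\int_0^t H_s\, d[\xi_k]_s\Bigr)^{1/2} \Bigl(\int_0^t H_s\, d[\xi_l]_s\Bigr)^{1/2}.
\]
Taking $H_s = \I_{\{\xi_k(s)=0\}}$ and combining with the previous display yields almost surely
\[
  \int_0^t \I_{\{\xi_k(s)=0\}}\, |\sigma_{k,l}(s)|\,ds \le \Bigl(\int_0^t \I_{\{\xi_k(s)=0\}}\sigma_{k,k}(s)\,ds\Bigr)^{1/2} \Bigl(\int_0^t \sigma_{l,l}(s)\,ds\Bigr)^{1/2} = 0,
\]
so $\int_0^t \sigma_{k,l}(s)\I_{\{\xi_k(s)=0\}}\,ds = 0$ a.s. The symmetric argument gives the same with $\xi_l$ in place of $\xi_k$, and these two vanishing relations combine (via the inclusion-exclusion $\I_{\{\xi_k\neq 0\}}\I_{\{\xi_l\neq 0\}} = 1 - \I_{\{\xi_k=0\}} - \I_{\{\xi_l=0\}} + \I_{\{\xi_k=0\}}\I_{\{\xi_l=0\}}$) to give
\[
  \int_0^t \sigma_{k,l}(s)\,ds = \int_0^t \sigma_{k,l}(s) \I_{\{\xi_k(s)\neq 0\}} \I_{\{\xi_l(s)\neq 0\}}\,ds
\]
almost surely for every rational $t$, hence for every $t\ge 0$ by continuity of both sides. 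This establishes the claimed equality.

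\textbf{Main obstacle.} The only delicate point is the occupation-times identity $\int_0^t \I_{\{\xi_k(s)=0\}}\,d[\xi_k]_s=0$; this is where the hypothesis that $\xi_k$ is a genuine continuous semimartingale and not merely a process of integrable variation is essential, since the argument invokes the existence of local times $L^a_t(\xi_k)$ and the identity $\int_0^t \Phi(\xi_k(s))\,d[\xi_k]_s = \int_{\mathbb R} \Phi(a)\,L^a_t(\xi_k)\,da$ for Borel $\Phi\ge 0$ (see e.g.\ the occupation density formula in Revuz--Yor). Once this is in hand, the Kunita--Watanabe step and the conclusion are routine.
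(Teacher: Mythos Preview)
Your proof is correct and follows essentially the same route as the paper: the paper also invokes the occupation-times formula (Kallenberg, Theorem~22.5) to kill $\int_0^t \sigma_{k,k}(s)\I_{\{\xi_k(s)=0\}}\,ds$, then applies the Kunita--Watanabe (``Cauchy-type'') inequality with the same indicator integrand to handle the cross terms. The only cosmetic differences are that you write the Kunita--Watanabe bound with the correct square roots (the paper's display omits them, evidently a typo) and you spell out the inclusion--exclusion step that the paper leaves implicit.
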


\begin{proof} %
  By Theorem~22.5~\cite{Kallenberg:2002}, one has for $k \in [2]$ almost surely
  \begin{align*}
    \int_{ 0 }^{ t } \sigma_{k,k}(s)\I_{\left\{ \xi_k(s)=0 \right\}}ds&=  \int_{ 0 }^{ t } \I_{\left\{ 0 \right\}}(\xi_k(s))d[\xi_k]_s\\
    &= \int_{ -\infty }^{ +\infty } \I_{\left\{ 0 \right\}}(x)L_t^{k,x}dx=0,\quad t\geq 0,
  \end{align*}
  where $L_t^{k,x}$, $t \geq 0$, $x \in \R$, is the local time of $\xi_k$. Applying the Cauchy-type inequality~\cite[Proposition~17.9]{Kallenberg:2002}, we estimate almost surely for every $t\geq 0$
  \begin{align*}
    \int_{ 0 }^{ t } |\sigma_{1,2}(s)|&\I_{\left\{ \xi_1(s)=0 \right\}}ds\leq \int_{ 0 }^{ t } \sigma_{1,1}(s)\I_{\left\{ \xi_1(s)=0 \right\}}ds \int_{ 0 }^{ t } \sigma_{2,2}(s)ds =0.
  \end{align*}
  Similarly, we get 
  \[
    \int_{ 0 }^{ t } |\sigma_{1,2}(s)|\I_{\left\{ \xi_2(s)=0 \right\}}ds=0, \quad t\geq 0,\ \ \mbox{a.s.} 
  \]
  These equalities trivially yield the statement of the lemma.
\end{proof}

\begin{lemma} %convergence of lambdan
  \label{lem_convergence_of_lambdan}
  Let $\lambda$ be a non-negative function from $\L_2$, $Q$ be a non-negative definite self-adjoint Hilbert-Schmidt operator on $\L_2$, $\chi^2$ be defined by~\eqref{equ_function_chi} and 
  \[
    \lambda^n=\sum_{ k=1 }^{ n } n \langle \lambda , \pi_k^n \rangle \I_{\left\{ q_{k,k}^n>0 \right\}}\pi_k^n,\quad n\geq 1,
  \]
  where $q_{k,k}^n=n \|Q \pi_k^n\|^2$. If $\lambda \I_{\left\{ \chi>0 \right\}}=\lambda$ a.e., then $\lambda^n \to \lambda$ in $\L_2$ as $n\to\infty$.
\end{lemma}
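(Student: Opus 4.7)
The plan is to compare $\lambda^n$ with the full step-function approximation
\[
  \tilde{\lambda}^n := \sum_{k=1}^{n} n\langle \lambda, \pi_k^n\rangle \pi_k^n,
\]
which is well-known to converge to $\lambda$ in $\L_2$ by density of step functions constant on dyadic intervals. Setting $K_n := \{k \in [n]:\ q_{k,k}^n>0\}$ and $B_n := \bigcup_{k\notin K_n}[\tfrac{k-1}{n},\tfrac{k}{n})$, we have $\tilde{\lambda}^n - \lambda^n = \sum_{k\notin K_n} n\langle \lambda, \pi_k^n\rangle \pi_k^n$, so by the triangle inequality it suffices to show $\|\tilde{\lambda}^n-\lambda^n\|\to 0$.

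Using $\|\pi_k^n\|^2=1/n$ together with the Cauchy--Schwarz inequality $(\int_{(k-1)/n}^{k/n}\lambda)^2 \leq \tfrac{1}{n}\int_{(k-1)/n}^{k/n}\lambda^2$, a direct computation gives
\[
  \|\tilde{\lambda}^n-\lambda^n\|^2 \;=\; n\sum_{k\notin K_n}\Bigl(\int_{(k-1)/n}^{k/n}\lambda(u)\,du\Bigr)^2 \;\leq\; \int_{B_n}\lambda^2(u)\,du.
\]
By the hypothesis $\lambda = \lambda\I_{\{\chi>0\}}$ a.e., the right-hand side equals $\int_{B_n\cap\{\chi>0\}}\lambda^2\,du$, so since $\lambda^2\in \L_1[0,1]$, absolute continuity of the Lebesgue integral reduces the problem to proving $|B_n\cap\{\chi>0\}|\to 0$.

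For this last step I invoke the Lebesgue differentiation theorem, applied simultaneously to the countable family $\{e_l\}_{l\geq 1}$: outside a single null set $N\subset[0,1]$ one has, for every $l\geq 1$,
\[
  n\langle\pi_{k(u,n)}^n,e_l\rangle \;=\; n\!\int_{(k(u,n)-1)/n}^{k(u,n)/n}\!e_l(v)\,dv \;\longrightarrow\; e_l(u) \quad\text{as }n\to\infty,
\]
where $k(u,n)$ is the unique index with $u \in [(k(u,n)-1)/n,k(u,n)/n)$. For $u\in\{\chi>0\}\setminus N$, the pointwise expansion $\chi^2(u)=\sum_l\mu_l^2 e_l^2(u)>0$ produces some index $l_0=l_0(u)$ with $\mu_{l_0}>0$ and $e_{l_0}(u)\neq 0$; the convergence above then forces $\int e_{l_0}\neq 0$ on the relevant interval for all $n$ large, hence $q_{k(u,n),k(u,n)}^n\geq \mu_{l_0}^2\bigl(n\int e_{l_0}\bigr)^2/n>0$, i.e.\ $u\notin B_n$ for all such $n$. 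Thus $\I_{B_n}\I_{\{\chi>0\}}\to 0$ a.e., and dominated convergence yields $|B_n\cap\{\chi>0\}|\to 0$, completing the argument.

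The only subtle point, and what I expect to be the main conceptual obstacle, is that the index $l_0$ delivered by positivity of $\chi^2(u)$ depends on $u$, so one cannot apply Lebesgue differentiation for a fixed $l$ and be done; the resolution is the countability of the eigenbasis, which lets one discard a single universal null set on which the differentiation fails for \emph{some} $e_l$.
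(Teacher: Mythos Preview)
Your proof is correct. Both your argument and the paper's hinge on the same observation: for almost every $u$ with $\chi(u)>0$ there is an eigenvector $e_{l_0}$ with $\mu_{l_0}>0$ whose averages over the interval containing $u$ are eventually nonzero, so the indicator $\I_{\{q^n>0\}}$ is eventually $1$ at $u$. The executions differ, however. The paper writes $\lambda^n=\tilde\lambda^n\I_{\{q^n>0\}}$ with $q^n=\sum_l\mu_l^2(\pr^n e_l)^2$, then along sub-subsequences uses Fatou's lemma to get $\varliminf q^n\ge\chi^2$ a.e., deduces $\lambda^n\to\lambda$ a.e.\ (hence in probability), and finally upgrades to $\L_2$ via a dominated-convergence argument on the norms. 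Your route is more direct: you bound $\|\tilde\lambda^n-\lambda^n\|^2$ by $\int_{B_n\cap\{\chi>0\}}\lambda^2$ via Cauchy--Schwarz and then kill this integral by showing $\I_{B_n}\I_{\{\chi>0\}}\to0$ a.e., which avoids both the subsequence extraction and the separate norm-convergence step. One small remark: your justification that $\tilde\lambda^n\to\lambda$ invokes ``dyadic intervals,'' but the partitions $\{\pi_k^n\}$ are not nested in $n$; the convergence still holds (e.g.\ by density of continuous functions and the uniform bound $\|\pr^n\|\le1$, which is essentially what the paper cites), so this is only a wording issue.
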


\begin{proof} %
  Denote 
  \[
    \tilde{\lambda}^n:=\pr^n \lambda=\sum_{ k=1 }^{ n } n \langle \lambda , \pi_k^n \rangle \pi_k^n,\quad n\geq 1.
  \]
  In this proof, functions from $\L_2$ will be considered as random elements on the probability space $([0,1],\B([0,1]),\leb_1)$, where $\B([0,1])$ is the Borel $\sigma$-algebra on $[0,1]$. We remark that $\tilde{\lambda}^n$ is the conditional expectation $\E{\lambda|\cS^n}$ determined on that probability space, where $\cS^n=\sigma\left\{ \pi_k^n,\ k \in [n] \right\}$.  By Proposition~1~\cite{Alonso:1998}, $\tilde{\lambda}^n \to \lambda$ in $\L_2$ as $n \to \infty$. In particular, $\tilde{\lambda}^n$ converges to $\lambda$ in probability as $n \to \infty$.

  Let
  \begin{align*}
    q^n:&= \sum_{ k=1 }^{ n } nq^n_{k,k}\pi_k^n=\sum_{ k=1 }^{ n } n^2 \|Q \pi_k^n\|^2 \pi_k^n=\sum_{ k=1 }^{ n } \left( n^2 \sum_{ l=1 }^{ \infty }\mu_l^2 \langle e_l , \pi_k^n \rangle^2 \right)\pi_k^n\\
    &= \sum_{ l=1 }^{ \infty } \mu_l^2\left( \sum_{ k=1 }^{ n } n^2 \langle e_l , \pi_k^n \rangle^2 \pi_k^n \right)=\sum_{ l=1 }^{ \infty } \mu_l^2\left( \pr^ne_l \right)^2, \quad n\geq 1.
  \end{align*}
  Remark that $\pr^n e_l \to e_l$ in probability as $n\to\infty$ for all $l\geq 1$. 

  We fix a subsequence $N \subset \N$. Then, by Lemma~4.2~\cite{Kallenberg:2002}, there exists a subsequence $N' \subset N$ such that $\tilde{\lambda}^n \to \lambda$ a.s. along $N'$. Using Lemma~4.2~\cite{Kallenberg:2002} again and the diagonalisation argument, we can find a subsequence $N'' \subset N'$ such that $\pr^n e_l \to e_l$ a.s. along $N''$ for all $l\geq 1$. By Fatou's lemma, 
  \[
    \varliminf_{ N'' \ni n\to\infty }q^n\geq \sum_{ l=1 }^{ \infty } \mu_l^2 e_l^2=\chi^2\quad \mbox{a.s.}
  \]
  This inequality and the lower semicontinuity of the map $\R\ni x \mapsto \I_{(0,+\infty)}(x)$ yield 
  \[
    \varliminf_{ N''\ni n\to\infty }\I_{\left\{ q^n>0 \right\}}\geq \I_{\left\{ \chi^2>0 \right\}}=\I_{\left\{ \chi>0 \right\}} \quad \mbox{a.s.}
  \]
  Consequently, using the equality 
  \begin{equation} %lambda and lambda tilda
  \label{equ_lambda_and_lambda_tilda}
    \lambda^n=\sum_{ k=1 }^{ n } n \langle \lambda , \pi_k^n \rangle \I_{\left\{ q_{k,k}^n>0 \right\}}\pi_k^n=\tilde{\lambda}^n \I_{\left\{ q^n>0 \right\}},
  \end{equation}
  and the convergence $\tilde{\lambda}^n \to \lambda$ a.s. along $N''$,
  we obtaing
  \[
    \varliminf_{ N''\ni n\to\infty }\lambda^n=\varliminf_{ N''\ni n\to\infty }\tilde{\lambda}^n \I_{\left\{ q^n>0 \right\}}\geq \lambda \I_{\left\{ \chi>0 \right\}}=\lambda \quad \mbox{a.s.}
  \]
  By~\eqref{equ_lambda_and_lambda_tilda}, we also have 
  \[
    \varlimsup_{ N''\ni n\to\infty }\lambda^n\leq \varlimsup_{ N''\ni n\to\infty }\tilde{\lambda}^n=\lambda \quad \mbox{a.s.}
  \]
  This implies the convergence $\lambda^n \to \lambda$ a.s. along $N''$, and hence, $\lambda^n \to \lambda$ in probability as $n\to\infty$, by Lemma~4.2~\cite{Kallenberg:2002}. We also remark that $\lambda^n\leq \tilde{\lambda}^n$, $n\geq 1$, and $\tilde{\lambda}^n \to \lambda$ in $\L_2$. Hence, dominated convergence Theorem~1.21~\cite{Kallenberg:2002} implies that $\|\lambda^n\| \to \|\lambda\|$. By Proposition~4.12~\cite{Kallenberg:2002}, $\lambda^n \to \lambda$ in $\L_2$ as $n\to\infty$.
\end{proof}

\begin{lemma} %norm of composition of hilbert Shmidt operators
  \label{lem_norm_of_composition_of_hilbert_shmidt_operators}
  Let $A \in \HS$, and $B_i$, $i=1,2$, be bounded operators on $\L_2$. Then $AB_i \in \HS$, $i=1,2$, and
  \[
    \langle AB_1 , AB_2 \rangle_{\HS}=\sum_{ n=1 }^{ \infty } \nu_n^2 \langle B^*_1\eps_n,B^*_2\eps_n \rangle,
  \]
  where $\{ \eps_n,\ n\geq 1 \}$ and $\left\{ \nu_n^2,\ n\geq 1 \right\}$ are eigenvectors and eigenvalues of $A^*A$, respectively.
\end{lemma}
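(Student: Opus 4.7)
The plan is to identify $\langle AB_1, AB_2\rangle_{\HS}$ with the trace of $B_2^*A^*AB_1$, use cyclicity of the trace to pull $A^*A$ to one end, and then evaluate in an orthonormal basis of eigenvectors of $A^*A$. First I would check membership in $\HS$ via the standard bound $\|TB\|_{\HS}\le \|T\|_{\HS}\|B\|_{\mathrm{op}}$, which follows from the adjoint-invariance of the Hilbert--Schmidt norm, $\|TB\|_{\HS}=\|B^*T^*\|_{\HS}$, combined with $\|B^*T^*f_k\|\le \|B\|_{\mathrm{op}}\|T^*f_k\|$ summed over any orthonormal basis $\{f_k\}$; applied to $T=A$ this gives $AB_i\in\HS$.

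Second, for any orthonormal basis $\{f_k\}$ of $\L_2$,
\[
  \langle AB_1, AB_2\rangle_{\HS} = \sum_k\langle AB_1 f_k, AB_2 f_k\rangle = \sum_k\langle B_2^*A^*A B_1 f_k, f_k\rangle = \mathrm{tr}\bigl(B_2^*A^*AB_1\bigr).
\]
Since $A\in\HS$ implies $A^*A$ is trace class, the product $A^*AB_1$ is trace class and $B_2^*$ is bounded, so cyclicity of the trace yields $\mathrm{tr}(B_2^*A^*AB_1)=\mathrm{tr}(A^*AB_1B_2^*)$.

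Third, I would extend $\{\eps_n\}$ to a complete orthonormal basis of $\L_2$ by adjoining an orthonormal basis of $\ker A^*A$ (on which $\nu_n=0$, so those indices contribute nothing). Computing the trace in this basis and using self-adjointness of $A^*A$ together with $A^*A\eps_n=\nu_n^2\eps_n$,
\[
  \mathrm{tr}(A^*AB_1B_2^*) = \sum_n\langle B_1B_2^*\eps_n, A^*A\eps_n\rangle = \sum_n \nu_n^2\langle B_1B_2^*\eps_n,\eps_n\rangle = \sum_n \nu_n^2\langle B_2^*\eps_n, B_1^*\eps_n\rangle,
\]
which coincides with the asserted $\sum_n \nu_n^2\langle B_1^*\eps_n, B_2^*\eps_n\rangle$ by symmetry of the real inner product on $\L_2$.

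The argument is essentially routine functional analysis; no step presents a genuine obstacle. The only points requiring explicit care are the trace-class justification of cyclicity (handled because $A^*A$ is trace class while the other factors are only required to be bounded) and the completion of the eigensystem of $A^*A$ to a full orthonormal basis of $\L_2$, which is automatic from the spectral theorem for compact self-adjoint operators.
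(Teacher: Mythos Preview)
Your proof is correct and essentially complete; the trace-class justification and the completion of the eigensystem are exactly the two points that need a word, and you handle both.

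The paper takes a slightly different route. Rather than invoking trace cyclicity, it first replaces $A$ by $\sqrt{A^*A}$ (same Hilbert--Schmidt inner product), then approximates $\sqrt{A^*A}$ in $\HS$ by the finite-rank operators $A^n=\sum_{l=1}^n \nu_l\,\eps_l\odot\eps_l$, computes $\langle A^nB_1,A^nB_2\rangle_{\HS}$ explicitly as a finite double sum, and finally switches the order of summation to obtain $\sum_l \nu_l^2\langle B_1^*\eps_l,B_2^*\eps_l\rangle$. Your argument is shorter and more conceptual, packaging the sum-switching step as trace cyclicity; the paper's argument is more elementary in that it stays entirely within the Hilbert--Schmidt inner product and avoids appealing to trace-class theory. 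Both are routine.
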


\begin{proof} %
  Set $A^n:=\sum_{ l=1 }^{ n } \nu_l \eps_l \odot \eps_l $, $n\geq 1$. Then it is easily seen that the sequence $\left\{A^n\right\}_{n\geq 1}$ converges to $\sqrt{ A^*A }=\sum_{ l=1 }^{ \infty } \nu_l \eps_l \odot \eps_l$ in $\HS$. Hence
  \begin{align*}
    \langle AB_1 , AB_2 \rangle_{\HS}&= \sum_{ k=1 }^{ \infty } \langle AB_1 \eps_k , AB_2 \eps_k \rangle=\sum_{ k=1 }^{ \infty } \langle A^*AB_1 \eps_k , B_2 \eps_k \rangle\\
    &= \left\langle \sqrt{ A^*A }B_1 , \sqrt{ A^*A }B_2 \right\rangle_{\HS}= \lim_{ n\to\infty }\left\langle A_nB_1,A_nB_2 \right\rangle_{\HS}\\
    &= \lim_{ n\to\infty }\sum_{ k=1 }^{ \infty } \left\langle A_nB_1\eps_k , A_nB_2 \eps_k \right\rangle=  \lim_{ n\to\infty }\sum_{ k=1 }^{ \infty } \sum_{ l=1 }^{ n } \nu_l^2 \langle B_1 \eps_k , \eps_l \rangle \langle B_2 \eps_k , \eps_l \rangle\\
    &= \sum_{ l=1 }^{ \infty } \sum_{ k=1 }^{ \infty }\nu_l^2 \langle \eps_k , B^*_1 \eps_l \rangle \langle \eps_k , B^*_2 \eps_l \rangle = \sum_{ l=1 }^{ \infty } \nu_l^2 \langle B^*_1 \eps_l , B^*_2 \eps_l \rangle.
  \end{align*}
\end{proof}

\begin{lemma} %convergence of composition in Hilbert Shmidt space
  \label{lem_convergence_of_composition_in_hilbert_shmidt_space}
  Let $A \in \HS$, and a sequence of bounded operators $B_n$, $n\geq 1$, in $\L_2$ converge pointwise to an operator $B$, that is, for every $\varphi \in \L_2$ $B_n \varphi \to B \varphi$ in $\L_2$ as $n\to\infty$. Then $B$ is bounded and $AB_n^* \to AB^*$ in $\HS$ as $n\to\infty$.
\end{lemma}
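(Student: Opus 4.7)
The plan has two main steps. First, I would verify that $B$ is bounded and that the sequence $\{\|B_n\|\}_{n\geq 1}$ is uniformly bounded by invoking the Banach-Steinhaus uniform boundedness principle. Since $B_n\varphi \to B\varphi$ in $\L_2$ for every $\varphi \in \L_2$, the family $\{B_n\varphi\}_{n\geq 1}$ is bounded in $\L_2$ for each fixed $\varphi$, so the principle yields a constant $K>0$ with $\|B_n\|\leq K$ for all $n\geq 1$. Passing to the limit in $\|B_n\varphi\|\leq K\|\varphi\|$ gives $\|B\varphi\|\leq K\|\varphi\|$, so $B$ is bounded with $\|B\|\leq K$.

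Second, I would compute $\|AB_n^*-AB^*\|_{\HS}^2$ using the previous lemma (Lemma on norm of composition of Hilbert-Schmidt operators) applied with $B_1=B_2=B_n^*-B^*$. Letting $\{\eps_l,\ l\geq 1\}$ and $\{\nu_l^2,\ l\geq 1\}$ denote the eigenvectors and eigenvalues of $A^*A$, the lemma gives
\[
  \|AB_n^*-AB^*\|_{\HS}^2=\sum_{l=1}^\infty \nu_l^2\,\|(B_n-B)\eps_l\|^2.
\]
Each term tends to zero as $n\to\infty$ by the assumed pointwise convergence, and the dominating bound $\|(B_n-B)\eps_l\|^2\leq (K+\|B\|)^2$ together with $\sum_l \nu_l^2=\|A\|_{\HS}^2<\infty$ provides a summable majorant $\nu_l^2(K+\|B\|)^2$. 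The dominated convergence theorem for series then yields $\|AB_n^*-AB^*\|_{\HS}\to 0$.

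The only mild subtlety is invoking Banach--Steinhaus; after that, everything is a clean application of the preceding lemma plus dominated convergence, so I do not expect a real obstacle here.
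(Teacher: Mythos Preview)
Your proof is correct and uses the same ingredients as the paper (Banach--Steinhaus, the preceding lemma on the Hilbert--Schmidt norm of a composition, and dominated convergence), but packages them more directly. The paper first establishes weak convergence of $AB_n^*$ to $AB^*$ in $\HS$ by testing against the basis $\eps_k\odot\eps_l$, then separately shows $\|AB_n^*\|_{\HS}\to\|AB^*\|_{\HS}$ via the identity $\|AB_n^*\|_{\HS}^2=\sum_l\nu_l^2\|B_n\eps_l\|^2$ and dominated convergence, and finally invokes the Hilbert-space fact that weak convergence together with convergence of norms implies strong convergence. You instead apply the preceding lemma directly to the difference $A(B_n-B)^*$, obtaining $\|AB_n^*-AB^*\|_{\HS}^2=\sum_l\nu_l^2\|(B_n-B)\eps_l\|^2$ and concluding in one step by dominated convergence. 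Your route is shorter and avoids the detour through weak convergence; the paper's route is a standard pattern but here unnecessary, since the same summable majorant works equally well for the difference as for the individual norms.
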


\begin{proof} %
  We first note that norms $\|B_n\|$, $n\geq 1$, are uniformly bounded, by the Banach-Stheinhaus theorem. Consequently, $B$ is a bounded operator on $\L_2$.
  
  Next, we will show that $\left\{AB_n^*\right\}_{n\geq 1}$ converges to $AB^*$ in the weak topology of $\HS$. Let $\{ \eps_n,\ n\geq 1 \}$ and $\{ \nu_n^2,\ n\geq 1 \}$ are eigenvectors and eigenvalues of $A^*A$. Then for every $k,l\geq 1$
  \begin{align*}
    \left\langle AB_n^*,\eps_k \odot \eps_l \right\rangle_{\HS}&= \langle AB_n^*\eps_l , \eps_k \rangle = \langle \eps_l , B_nA^*\eps_k \rangle\\
    &\to \langle \eps_l , BA^*\eps_k \rangle=\left\langle AB^*,\eps_k \odot \eps_l \right\rangle_{\HS} \quad \mbox{as}\ \ n\to\infty.
  \end{align*}
Since $\spann\{ \eps_k\odot \eps_l,\ k,l \geq 1 \}$ is dense in $\HS$ and $\left\|AB_n^*\right\|_{\HS}\leq \|A\|_{\HS}\|B_n^*\|$, $n \geq 1$, is uniformly bounded, the sequence $\left\{AB_n^*\right\}_{n\geq 1}$ converges to $AB^*$ in the weak topology of $\HS$. By the dominated convergence theorem, the uniform boundedness of the norms of $\|B_n\|$, $n\geq 1$, and Lemma~\ref{lem_norm_of_composition_of_hilbert_shmidt_operators}, we obtain 
  \[
    \left\|AB_n^*\right\|_{\HS}^2=\sum_{ l=1 }^{ \infty } \nu_l^2 \|B_n \eps_l\|^2 \to \sum_{ l=1 }^{ \infty } \nu_l^2 \|B \eps_l\|^2 =\left\|AB^*\right\|_{\HS}^2 \quad \mbox{as}\ \ n\to\infty.
  \]
 This implies that $\left\{AB_n^*\right\}_{n\geq 1}$ converges to $AB^*$ in the strong topology of $\HS$.

\end{proof}

Let $\HS^{p,sa}$ be a closed subset of $\HS$ consisting of non-negative definite self-adjoint operators. Consider 
\[
  \rB\left(\HS^{p,sa}\right):=\left\{ L \in \rB\left(\HS\right):\ L \in \HS^{p,sa}\ \ \mbox{a.e.}  \right\}.
\]
Remark that a non-negative definite self-adjoint operator $A$ on $\L_2$ has the square root, i.e. there exists a unique non-negative definite self-adjoint operator $\sqrt{ A }$ on $\L_2$ such that $\left(\sqrt{ A }\right)^2=A$. This trivially follows from the spectral theorem.

\begin{lemma} %square root operator
  \label{lem_square_root_operator}
    \begin{enumerate}
      \item [(i)] The set $\rB\left( \HS^{p,sa} \right)$ is closed in $\rB\left(\HS\right)$.
  
      \item [(ii)] For every $r>0$ the set 
        \[
  	S_r:=\left\{ L \in \rB\left( \HS^{p,sa} \right):\ \int_{ 0 }^{ T } \left\|\sqrt{ L_t }\right\|^2_{\HS}dt\leq r  \right\}
        \]
        is closed in $\rB\left(\HS\right)$.
  
      \item [(iii)] For every $r>0$ the map $\Phi^r:S_r \to \rB\left( \HS^{p,sa} \right)$ defined as 
        \[
  	\Phi^r_t(L)=\sqrt{ L_t }, \quad t \in[0,T], \quad L \in S_r,
        \]
        is Borel measurable.
    \end{enumerate}
\end{lemma}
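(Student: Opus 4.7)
For \emph{(i)}, I would take a weakly convergent sequence $L^n \to L$ in $\rB(\HS)$ with each $L^n \in \rB(\HS^{p,sa})$, and test against the step element $\Psi(s) := \I_{[a,b]}(s)(\varphi \odot \psi)$, which belongs to $\LL{\HS}$ for any $\varphi, \psi \in \L_2$ and $0 \le a < b \le T$. Weak convergence gives $\int_a^b \langle L^n_t \psi, \varphi\rangle\,dt \to \int_a^b \langle L_t \psi, \varphi\rangle\,dt$; self-adjointness of $L^n_t$ equates this with $\int_a^b \langle L^n_t \varphi, \psi\rangle\,dt$, and non-negative definiteness makes it $\geq 0$ when $\psi = \varphi$. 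Running $(\varphi,\psi)$ over a countable dense subset of $\L_2$ and $[a,b]$ over rational intervals, and then using continuity of $(\varphi, \psi) \mapsto \langle L_t \varphi, \psi\rangle$, transfers self-adjointness and positive definiteness to $L_t$ for a.e.\ $t$, so $\rB(\HS^{p,sa})$ is closed.

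For \emph{(ii)}, I would use $\|\sqrt{L_t}\|_{\HS}^2 = \mathrm{tr}(L_t) = \sum_k \langle L_t e_k, e_k\rangle$ (allowing $+\infty$) valid for $L_t \in \HS^{p,sa}$. Each partial sum
\[
F_N(L) := \int_0^T \Big\langle L_t,\, \sum_{k=1}^N e_k \odot e_k \Big\rangle_{\HS}\,dt
\]
is a weakly continuous linear functional on $\LL{\HS}$, the test function being a fixed element of $\HS$. By monotone convergence $\sup_N F_N(L) = \int_0^T \|\sqrt{L_t}\|_{\HS}^2\,dt$, which is therefore weakly lower semicontinuous on $\rB(\HS^{p,sa})$; combined with (i) this forces $S_r$ to be closed.

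The main obstacle is \emph{(iii)}. Since $\LL{\HS}$ is a separable Hilbert space, its weak and norm Borel $\sigma$-algebras coincide on bounded sets, so I would work throughout in the HS-norm topology. At the single-operator level, the squaring map $\Theta(B) := B^2$ from $\HS^{p,sa}$ to $\HS$ is HS-continuous (from $\|B_n^2 - B^2\|_{\HS} \leq \|B_n - B\|_{\HS}(\|B_n\|_{\HS} + \|B\|_{\HS})$ via $\|\cdot\|_{op} \leq \|\cdot\|_{\HS}$) and injective, since the positive square root is unique, so by Kuratowski's theorem (Theorem~A.10.5 of \cite{Ethier:1986}, already invoked in the paper) the image $\Theta(\HS^{p,sa})$ is Borel in $\HS$ and $A \mapsto \sqrt{A}$ is Borel measurable on it.

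To lift to $\LL{\HS}$, it suffices to show that for each test function $\Psi(t) := \I_{[a,b]}(t)\, C$ with $C \in \HS$ the map $L \mapsto \int_a^b \langle \sqrt{L_t}, C\rangle_{\HS}\,dt$ is Borel on $S_r$, since the linear span of such $\Psi$ is dense in $\LL{\HS}$ and the corresponding functionals generate the Borel structure. A monotone class argument shows that for every bounded Borel $\phi: \HS \to \R$ the map $L \mapsto \int_a^b \phi(L_t)\,dt$ is Borel on $S_r$: for bounded continuous $\phi$ this is sequentially continuous (HS-norm convergence gives a.e.\ pointwise convergence along a subsequence, then dominated convergence applies), and the class of $\phi$ is closed under bounded pointwise limits. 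Applying this to the truncation $\phi_N(A) := (-N) \vee \langle \sqrt{A}, C\rangle_{\HS} \wedge N$ (Borel by the previous paragraph) and passing $N \to \infty$ with the $L^1(dt)$-dominant $\|\sqrt{L_t}\|_{\HS}\|C\|_{\HS}$ (finite because $\int_0^T \|\sqrt{L_t}\|_{\HS}^2 dt \leq r$) yields the claim.
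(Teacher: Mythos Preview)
Your arguments for (i) and (ii) match the paper's: test weak limits against rank-one step functions $\I_{[0,t]}\,\psi\odot\varphi$ to transfer self-adjointness and positivity, and write $\int_0^T\|\sqrt{L_t}\|_{\HS}^2\,dt=\sum_k\int_0^T\langle L_te_k,e_k\rangle\,dt$ as a supremum of weakly continuous linear functionals (the paper phrases this via Fatou), giving lower semicontinuity.

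For (iii) you take a different and more robust route. The paper reduces, as you do, to showing that $L\mapsto\int_0^t\langle\sqrt{L_s}\varphi,\psi\rangle\,ds$ is Borel for each $t,\varphi,\psi$, but then claims this map is norm-continuous by writing $\langle\sqrt{L_s}\varphi,\psi\rangle=\langle L_s\varphi,L_s\psi\rangle$ --- an identity that is not correct (the right-hand side equals $\langle L_s^2\varphi,\psi\rangle$), so the paper's verification of (iii) is, as written, not complete. Your argument sidesteps this: you obtain Borel measurability of $A\mapsto\sqrt{A}$ at the single-operator level via Kuratowski's theorem (already used elsewhere in the paper), then lift to $\LL{\HS}$ by a functional monotone class argument showing that $L\mapsto\int_a^b\phi(L_t)\,dt$ is Borel for every bounded Borel $\phi:\HS\to\R$, and finally remove the truncation using the $L^1(dt)$ dominant $\|\sqrt{L_t}\|_{\HS}\|C\|_{\HS}$ supplied by the definition of $S_r$. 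This is longer, but it is correct and never relies on any continuity property of the square root. One small point worth making explicit: your $\phi_N$ is defined only on the Borel set $\Theta(\HS^{p,sa})\subset\HS$, so extend it by zero off that set before invoking the monotone class step; on $S_r$ this is harmless since $L_t\in\Theta(\HS^{p,sa})$ for a.e.\ $t$.
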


\begin{proof} %
  Let $L^n$, $n\geq 1$, be a sequence from $\rB\left( \HS^{p,sa} \right)$ which converges to $L$ in $\rB\left(\HS\right)$. We take arbitrary $t \in[0,T]$ and $\varphi,\psi \in \L_2$ and consider 
  \begin{align*}
    \int_{ 0 }^{ t } \left\langle L_s \varphi , \psi \right\rangle ds&= \int_{ 0 }^{ t } \left\langle L_s, \psi\odot \varphi \right\rangle_{\HS}ds=\lim_{ n\to\infty }\int_{ 0 }^{ t } \left\langle L_s^n, \psi \odot \varphi \right\rangle_{\HS}ds \\
    &=\lim_{ n\to\infty }\int_{ 0 }^{ t } \left\langle L_s^n \varphi, \psi \right\rangle ds = \lim_{ n\to\infty }\int_{ 0 }^{ t } \left\langle \varphi , L_s^n \psi \right\rangle ds = \int_{ 0 }^{ t } \left\langle \varphi , L_s \psi \right\rangle ds.
  \end{align*}
  Due to the density of the set $\spann\left\{ \I_{[0,t]} \varphi\odot \psi,\ t \in[0,T],\ \varphi,\psi \in \L_2 \right\}$ in $\LL{ \HS}$, we obtain that $L$ is self-adjoint a.e. Similarly, one can show that $L$ is non-negative definite. Hence, $\rB\left(\HS^{p,sa}\right)$ is closed.

  Next we prove (ii). Take a sequence $L^n$, $n\geq 1$, from $S_r$ which converges to $L$ in $\rB\left(\HS\right)$. We remark that $L \in \rB\left(\HS^{p,sa}\right)$ due to (i). Then 
  \begin{align*}
    \int_{ 0 }^{ T } \left\|\sqrt{ L_t }\right\|_{\HS}^2dt&= \int_{ 0 }^{ T }\left[ \sum_{ k=1 }^{ \infty } \left\|\sqrt{ L_t }e_k\right\|^2\right]dt =\sum_{ k=1 }^{ \infty } \int_{ 0 }^{ T } \left\langle L_te_k , e_k \right\rangle dt  \\
    &\leq \varliminf_{ n\to\infty }\sum_{ k=1 }^{ \infty } \int_{ 0 }^{ T } \left\langle L^n_te_k , e_k \right\rangle dt=\varliminf_{ n\to\infty }\int_{ 0 }^{ T } \left\|\sqrt{ L_t^n }\right\|_{\HS}^2dt\leq r,
  \end{align*}
  by Fatou's lemma and the fact that $\int_{ 0 }^{ T } \left\langle L_t^ne_k , e_k \right\rangle \to \int_{ 0 }^{ T } \left\langle L_te_k , e_k \right\rangle  $, $n\to\infty$, for all $k\geq 1$. Thus, $S_r$ is closed.
  
  In order to check (iii), we first remark that it is enough to show that for every $t \in[0,T]$ and $\varphi,\psi \in \L_2$, the map 
  \begin{equation} %square root map with phi psi
  \label{equ_square_root_map_with_phi_psi}
  S_r\ni L\mapsto \int_{ 0 }^{ T } \left\langle \Phi^r_s(L),\I_{ [0,t] }(s)\psi\odot \varphi \right\rangle_{\HS}ds= \int_{ 0 }^{ t } \left\langle \Phi^r_s(L)\varphi,\psi \right\rangle ds\in \R 
  \end{equation}
  is Borel measurable. By Theorem~1.2~\cite{Vakhania:1987}, the Borel $\sigma$-algebra on $\rB\left(\HS\right)$ coincides with $\sigma$-algebra of all Borel measurable sets of $\LL{ \HS}$ contained in the ball $\rB\left(\HS\right)$. Consequently, it is enough to show that map~\eqref{equ_square_root_map_with_phi_psi} is Borel measurable as a map from $S_r$ to $\R$, where $S_r$ is embedded with the strong topology of $\LL{ \HS}$. But then map~\eqref{equ_square_root_map_with_phi_psi} 
  \[
    S_r\ni L\mapsto\int_{ 0 }^{ t } \left\langle \Phi^r_s(L)\varphi , \psi \right\rangle=\int_{ 0 }^{ t } \left\langle L_s \varphi , L_s \psi \right\rangle ds
  \]
  is continuous, and, thus, Borel measurable. This finishes the proof of the lemma.
\end{proof}

Let the basis $\left\{\tilde{e}_k,\ k\geq 1\right\}$ in $\L_2$ be defines as in Section~\ref{sub:a_property_of_quadratic_variation_of_some_semi_martingales}, that is, $\tilde{e}_1(u)=1$, $u \in[0,1]$, and $\tilde{e}_k(u)=\sqrt{ 2 }\cos \pi(k-1)u$, $u \in[0,1]$, $k\geq 2$. For $h \in \L_2$ we define 
\[
  \dot{h}=\sum_{ n=1 }^{ \infty } \langle h , \tilde{e}_n \rangle \tilde{e}'_n,
\]
if the series converges in $\L_2$. We remark that $\langle \dot{h} , \varphi \rangle=-\langle h , \varphi' \rangle$ for every $\varphi \in \Cf^1[0,1]$ with $\varphi(0)=\varphi(1)=0$.

\begin{lemma} %truncation
  \label{lem_truncation}
  Let $h \in \L_2$ be non-negative and $\dot{h}$ exist. Then $\dot{h}\I_{\left\{ 0 \right\}}(h)=0$ a.e.
\end{lemma}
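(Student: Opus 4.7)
The key observation is that the hypothesis ``$\dot h$ exists in $\L_2$'' is precisely the statement that $h$ belongs to the Sobolev space $H^1([0,1])$, with $\dot h$ its distributional derivative. Indeed, the paragraph immediately preceding the lemma already verifies $\langle \dot h,\varphi\rangle=-\langle h,\varphi'\rangle$ for all $\varphi\in\Cf^1[0,1]$ vanishing at the endpoints, which is precisely the defining relation of the weak derivative in one dimension. Once this identification is made, the conclusion $\dot h\,\I_{\{h=0\}}=0$ a.e.\ is a standard Stampacchia-type property: the weak derivative of a one-dimensional Sobolev function is zero on every level set of the function.

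My plan is to prove this directly by truncation, bypassing any general chain rule. For each $\delta>0$ I would introduce the truncated function $h^{\delta}:=(h-\delta)^{+}$ and verify, via mollification of $h$, that $h^{\delta}\in H^{1}([0,1])$ with weak derivative $\dot h^{\delta}=\dot h\,\I_{\{h>\delta\}}$. The mollification step is standard: smooth approximants $h_n=h*\rho_n$ satisfy $(h_n-\delta)^{+\prime}=h_n'\,\I_{\{h_n>\delta\}}$ pointwise wherever $h_n\neq\delta$, and passing $n\to\infty$ (using $h_n\to h$ uniformly, since $h$ is continuous as an element of $H^{1}[0,1]$, together with $\I_{\{h_n>\delta\}}\to\I_{\{h>\delta\}}$ a.e.\ on $\{h\neq\delta\}$, which is almost all of $[0,1]$ by Sard-type reasoning, and $h_n'\to\dot h$ in $\L_2$) yields the claim.

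Now I would let $\delta\downarrow 0$. Since $h\ge 0$, we have $h^{\delta}\to h$ in $\L_2$, and the dominated convergence theorem gives $\dot h\,\I_{\{h>\delta\}}\to\dot h\,\I_{\{h>0\}}$ in $\L_2$. By continuity of the weak-derivative map on $H^{1}$, the limiting weak derivative of $h$ equals $\dot h\,\I_{\{h>0\}}$; by uniqueness of weak derivatives we conclude $\dot h=\dot h\,\I_{\{h>0\}}$ a.e., which is exactly $\dot h\,\I_{\{0\}}(h)=0$ a.e.

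The only subtle point I expect is the identification of $\dot h$ (defined through a Neumann Fourier series involving the $\tilde e_k'$) with the classical weak derivative, together with the assertion that $\I_{\{h_n>\delta\}}\to\I_{\{h>\delta\}}$ on a set of full measure as $n\to\infty$. The former is handled cleanly by the distributional identity recalled in the paper; the latter is handled by choosing $\delta$ outside the at-most-countable set of values for which $\{h=\delta\}$ has positive Lebesgue measure (a.e.\ $\delta$ works), and then sending $\delta\downarrow 0$ along such a sequence. Everything else is routine manipulation.
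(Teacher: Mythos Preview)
Your proof is correct. Both your argument and the paper's are Stampacchia-type truncation arguments, but the execution differs. The paper works with the smooth approximation $\psi_\eps(x)=\sqrt{x^2+\eps^2}-\eps$ of $|x|$; since $\psi_\eps\in C^1$ with bounded derivative, the Sobolev chain rule gives $\langle\psi_\eps(h),\varphi'\rangle=-\langle\psi_\eps'(h)\dot h,\varphi\rangle$ immediately, and sending $\eps\to 0$ (using $\psi_\eps'\to\sgn$ and $h\ge 0$) yields $\dot h=\dot h\,\I_{\{h>0\}}$ in three lines. Your route via $h^\delta=(h-\delta)^+$ reaches the same conclusion but, because $(x-\delta)^+$ is not $C^1$, you need the auxiliary mollification step and the care with the level sets $\{h=\delta\}$. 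That extra work is all handled correctly in your sketch; the paper's choice of a smooth outer function simply short-circuits it.
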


\begin{proof} %
  We consider for every $\eps>0$ the function $\psi_{\eps}(x)=\sqrt{ x^2+\eps^2 }-\eps$, $x \in \R $. Then $\psi_{\eps}$ is continuously differentiable, $\psi_{\eps}(0)=0$ and $\psi_{\eps}(x) \to |x|$ as $\eps \to 0+$ for all $x \in \R $. Moreover, $|\psi'_{\eps}(x)|\leq 1$ and $\psi'_{\eps}(x)\to \sgn(x)$ for all $x \in \R $. Take any function $\varphi \in \Cf[0,1]$ satisfying $\varphi(0)=\varphi(1)=0$. By the dominated convergence theorem, it is easily seen that 
  \[
    \langle \psi_{\eps}(h) , \varphi' \rangle=-\langle \psi'_{\eps}(h)\dot{h} , \varphi \rangle.
  \]
  Making $\eps \to 0+$, and using the non-negativity of $h$, we have
  \[
    -\langle \dot{h} , \varphi \rangle=\langle h , \varphi' \rangle=-\langle \I_{ (0,+\infty) }(h)\dot{h} , \varphi \rangle.
  \]
  Since $\varphi$ was arbitrary, we can conclude that $\dot{h}=\dot{h}\I_{(0,+\infty)}(h)$ a.e. This completes the proof of the lemma. 

\end{proof}

\begin{remark} %
  The same statement of Lemma~\ref{lem_truncation} remains true if the ``cos'' basis is replaced by the ``sin'' basis $\tilde{e}_k=\sqrt{ 2 }\sin \pi ku$, $u \in [0,1]$, $k\geq 1$.
\end{remark}

\providecommand{\bysame}{\leavevmode\hbox to3em{\hrulefill}\thinspace}
\providecommand{\MR}{\relax\ifhmode\unskip\space\fi MR }
% \MRhref is called by the amsart/book/proc definition of \MR.
\providecommand{\MRhref}[2]{%
  \href{http://www.ams.org/mathscinet-getitem?mr=#1}{#2}
}
\providecommand{\href}[2]{#2}

%\bibliography{ref}
%\bibliographystyle{amsalpha}

\end{document}